\newcommand{\seq}{\subseteq}
\newcommand{\C}{\mathbb{C}}
\newtheorem{thm}{Theorem}[section]
\newtheorem*{thm-nl}{Theorem}
\newtheorem*{prop-nl}{Proposition}
\newtheorem{lem}[thm]{Lemma}
\def\PP{{\mathbf P}}
\def\Pic0{{\mathrm Pic}^0(X)}
\newtheorem{cor}[thm]{Corollary}
\newtheorem*{cor-nl}{Corollary}
\newtheorem*{conjecture-nl}{Conjecture}
\newtheorem*{quest-nl}{Question}
\newtheorem*{quests-nl}{Questions}
\newtheorem{prop}[thm]{Proposition}
\theoremstyle{remark}
\newtheorem*{rem}{Remark}
\newtheorem{remark}[thm]{Remark}
\title{{Universal Secant Bundles and Syzygies of Canonical Curves}}
\date{\today}
\author[M. Kemeny]{Michael Kemeny}
\address{University of Wisconsin-Madison, Department of Mathematics, 480 Lincoln Dr
\hfill \newline\texttt{}
 \indent WI 53706, USA} \email{{\tt michael.kemeny@gmail.com}}
\begin{document}
\begin{abstract}
We introduce a relativization of the secant sheaves from \cite{green-laz-petri} and \cite{ein-lazarsfeld-asymptotic} and apply this construction to the study of syzygies of canonical curves. As a first application, we give a simpler proof of Voisin's Theorem for general canonical curves. This completely determines the terms of the minimal free resolution of the coordinate ring of such curves. Secondly, in the case of curves of even genus, we enhance Voisin's Theorem by providing a structure theorem for the last syzygy space, resolving the Geometric Syzygy Conjecture in even genus.
\end{abstract}
\maketitle
\setcounter{section}{-1}
\section{Introduction}
In this paper, we introduce a \emph{universal} version of the secant sheaf construction from \cite{ein-lazarsfeld-asymptotic}. Using this tool, we give a simpler proof of a theorem of Voisin \cite{V1}, \cite{V2} on the equations of canonical curves. We further give a generalization of her result for curves of even genus.\smallskip

 The classical Theorem of Noether--Babbage--Petri states that canonical curves are projectively normal, and that the ideal $I_{C/\PP^{g-1}}$ is generated by quadrics (with a few exceptions), see \cite{arbarello-sernesi-petri} for a modern treatment. In the 1980s, M.\ Green realized that these classical results about the equations defining canonical curves should be the first case of a much more general statement about higher syzygies, and he made a very influential conjecture \cite{green-koszul} in this direction.\smallskip

Whilst the general case of Green's Conjecture remains open, in 2002 Voisin made a breakthrough by proving the conjecture for \emph{general} curves of even genus \cite{V1}.  Voisin's argument relies on an intricate study of the geometry of Hilbert schemes on a K3 surface. Recently, an algebraic approach to Voisin's Theorem has been given, \cite{AFPRW}, based on degenerating to the tangent developable, a singular surface whose hyperplane sections are cuspidal curves. The authors apply the representation theory of an $SL_2$ action present in this special situation to establish Green's conjecture for rational cuspidal curves. Explicit plethysm formulae play the key role, involving a change of basis between elementary symmetric polynomials and Schur polynomials. Maps which are simple to describe in one basis become rather complicated in the other, making the proof quite technical, see \cite[\S 5.5--5.7]{AFPRW}. \smallskip


In this paper, we first give a simpler proof of Voisin's Theorem, using only basic homological algebra and without the need to degenerate. We further provide a structure theorem in the even genus case, describing in detail the extremal syzygy space. Let $X$ be a complex K3 surface with Picard group generated by an ample line bundle $L$ of even genus $g=2k$, i.e.\ $(L)^2=2g-2$. We proceed by direct computation on $X$. Define $\mathrm{K}_{p,q}(X,L)$ as the middle cohomology of 
{\small{$$\bigwedge^{p+1} \mathrm{H}^0(X,L) \otimes \mathrm{H}^0(X,L^{\otimes q-1}) \to \bigwedge^p \mathrm{H}^0(X,L) \otimes \mathrm{H}^0(X,L^{\otimes q}) \to \bigwedge^{p-1} \mathrm{H}^0(X,L) \otimes \mathrm{H}^0(X,L^{\otimes q+1}) $$}}
 Voisin's Theorem states that $\mathrm{K}_{k,1}(X,L)=0$, \cite{V1}. This single vanishing suffices to prove Green's Conjecture for general canonical curves in even genus. \smallskip

Let $E$ be the rank two \emph{Lazarsfeld--Mukai bundle} associated to a $g^1_{k+1}$ on a smooth curve $C \in |L|$, see \cite{lazarsfeld-BNP}. The dual bundle $E^{\vee}$ fits into the exact sequence
$$0 \to E^{\vee} \to \mathrm{H}^0(C,A) \otimes \mathcal{O}_X \to i_*A \to 0,$$
for $A$ a $g^1_{k+1}$ on $C$, where $i: C \hookrightarrow X$ is the inclusion. The vector bundle $E$ has invariants $\det(E)=L$, $h^0(E)=k+2$, $h^1(E)=h^2(E)=0$.\smallskip

We deduce Voisin's Theorem from the K\"unneth formula on $X \times \PP(\mathrm{H}^0(E))$. Our proof reduces to showing that a certain square matrix is nonsingular. Since our matrix takes the form $\mathrm{H}^k(\mathrm{Sym}^{k+1} \mathcal{G}) \to \mathrm{H}^k(\mathrm{Sym}^{k} \mathcal{G} \otimes \mathcal{G})$, for some bundle $\mathcal{G}$, the desired nonsingularity is \emph{automatic}, see the proof of Proposition \ref{auto-injectivity}.\smallskip

The starting point of Voisin's proof \cite{V1} is her \emph{Hilbert Scheme Description} of Koszul cohomology via the cohomology of tautological sheaves on the Hilbert scheme, see \cite[\S 5]{aprodu-nagel}. This description has also been used to effect in \cite{ein-lazarsfeld-gonality}. Our approach uses instead the, considerably simpler and more general, \emph{Kernel Bundle} approach to syzygies, \cite{lazarsfeld-VBT}, as our starting point.\smallskip

The projective space $\PP:=\PP(\mathrm{H}^0(E))$ can be embedded into the Hilbert scheme $X^{[k+1]}$ by sending $s\in \mathrm{H}^0(E)$ to the zero scheme $Z(s)$. Whilst $\PP \seq X^{[k+1]}$ is also a crucial object in Voisin's proof, in order to make her argument work $\PP$ needs to be replaced by the space of zero cycles of the form $Z(s)-x+y$ with $x \in Z(s)$ and $y \in X$. This results in significant complications in Voisin's argument, see \cite[\S 6.3]{aprodu-nagel}. By contrast, we find a way to work directly on $\PP$.\smallskip

  A few years after her breakthrough for even genus curves, Voisin deduced the odd genus case of generic Green's conjecture out of the even genus case \cite{V2}. In Section \ref{odd-genus}, we give a streamlined version of Voisin's argument:
  \begin{thm} \label{generic-green}
  Let $C$ be a general curve over $\C$ of genus $g=2k$ or $g=2k+1$. Then $\mathrm{K}_{k,1}(C,\omega_C)=0$.
  \end{thm}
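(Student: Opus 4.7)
The plan is to split into the two parity cases, handling $g=2k$ via the paper's main K3 theorem and reducing $g=2k+1$ to the even case applied at genus $2k+2$.

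For $g = 2k$: a general curve $C$ of genus $2k$ arises as a smooth hyperplane section of some polarized K3 surface $(X, L)$ with $\mathrm{Pic}(X) = \ZZ \cdot L$ and $L^{2} = 2g-2$ (Mukai), and adjunction gives $\omega_{C} = L|_{C}$. The main theorem of the paper supplies $\mathrm{K}_{k,1}(X, L) = 0$. The short exact sequence
$$0 \to \mathcal{O}_{X} \to L \to \omega_{C} \to 0$$
induces a long exact sequence of Koszul cohomology groups whose flanking terms $\mathrm{K}_{k+1,0}(X, L)$ and $\mathrm{K}_{k-1,2}(X, L)$ vanish by direct computations on the K3 (using $H^{i}(X, L^{\otimes q}) = 0$ for $i, q \geq 1$). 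This yields $\mathrm{K}_{k,1}(C, \omega_{C}) = 0$.

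For $g = 2k+1$: fix a general K3 surface $X$ of genus $2k+2$. The discriminant locus inside $|L|$ is a divisor whose generic point is a one-nodal curve $C_{0}$ with normalization $\tilde C$ of geometric genus $2k+1$, and a standard dimension count together with a Brill--Noether argument shows that these normalizations dominate $\mathcal{M}_{2k+1}$. Applying the even case to $X$ gives $\mathrm{K}_{k+1,1}(X, L) = 0$, and the same restriction procedure applied to the Cartier but singular divisor $C_{0} \in |L|$ produces $\mathrm{K}_{k+1,1}(C_{0}, \omega_{C_{0}}) = 0$. Setting $\{p, q\} = \nu^{-1}(\mathrm{node})$ for the normalization $\nu : \tilde C \to C_{0}$, the identification $\nu^{*}\omega_{C_{0}} = \omega_{\tilde C}(p + q)$ rewrites this as $\mathrm{K}_{k+1,1}(\tilde C, \omega_{\tilde C}(p + q)) = 0$, computed with respect to the linear system $V := H^{0}(\omega_{\tilde C}(p + q))$ of dimension $2k+2$.

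The final and hardest step is to extract $\mathrm{K}_{k,1}(\tilde C, \omega_{\tilde C}) = 0$ from $\mathrm{K}_{k+1,1}(\tilde C, \omega_{\tilde C}(p + q)) = 0$. Geometrically this is the projection of $C_{0} \subset \PP(V^{\vee})$ from its node onto the canonical image of $\tilde C \subset \PP(H^{0}(\omega_{\tilde C})^{\vee})$; algebraically, the codimension-one inclusion $H^{0}(\omega_{\tilde C}) \hookrightarrow V$, combined with the divisorial modification of the underlying line bundle by $p + q$, gives a Koszul comparison relating the two groups across shifted indices. Propagating the vanishing requires the flanking Koszul terms (involving $\mathrm{K}_{k,2}$ or $\mathrm{K}_{k+1,0}$ for auxiliary line bundles on $\tilde C$) to vanish, which should follow from the generality of $p, q$ and Riemann--Roch. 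I expect this Koszul comparison lemma, together with the verification of its flanking vanishings, to be the main technical obstacle; by contrast the K3 restriction and the genericity of the nodal normalization are comparatively routine.
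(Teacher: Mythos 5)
The even--genus half of your proposal is, in substance, the paper's own route (Section \ref{sec1} proves $\mathrm{K}_{k,1}(X,L)=0$ on a Picard rank one K3 and restricts to a curve in $|L|$), but your first sentence is false as stated: for $g\geq 12$ the curves lying on K3 surfaces form a proper subvariety of the moduli space, so a general curve of genus $2k$ is \emph{not} a hyperplane section of a K3. The statement survives because $\dim\mathrm{K}_{p,q}$ is upper semicontinuous in flat families, so producing a \emph{single} smooth curve with $\mathrm{K}_{k,1}(C,\omega_C)=0$ (a Brill--Noether--Petri general section of a Picard rank one K3) suffices; you need to say this, and the same correction is needed for your claim that the normalized nodal sections dominate $\mathcal{M}_{2k+1}$. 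The restriction step itself is the standard hyperplane--section (Lefschetz) isomorphism $\mathrm{K}_{p,1}(X,L)\simeq\mathrm{K}_{p,1}(C,\omega_C)$, valid since $\mathrm{H}^1(X,qL)=0$; your flanking terms are not quite the right ones but this is cosmetic.

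The odd--genus half contains a genuine gap exactly where you flag it, and your proposed mechanism cannot close it. The comparison you need does not reduce to ``flanking vanishings from generality of $p,q$ and Riemann--Roch'': on the surface, the sequence $0\to M_L\to M_{L'}\to\mathcal{O}(-\Delta)\to 0$ gives
\begin{equation*}
0\to \mathrm{H}^1(\wedge^{k+1}M_L)\to \mathrm{H}^1(\wedge^{k+1}M_{L'})\xrightarrow{\;pr_k\;}\mathrm{H}^1(\wedge^{k}M_L(-\Delta)),
\end{equation*}
and the middle term is $\mathrm{K}_{k,1}(X,L')\simeq\mathrm{Sym}^{k-1}\mathrm{H}^0(E)$ --- one index \emph{below} the vanishing range for the genus $2k+2$ polarization, and very much nonzero. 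So the even--genus vanishing $\mathrm{K}_{k+1,1}(X,L')=0$ that you feed in contributes nothing to this step; what is actually needed is the even--genus \emph{structure} theorem (Theorem \ref{geo-thm-intro} / Corollary \ref{main-cor-nat-iso}, transported to the nodal K3 in Lemma \ref{green-pic2}) identifying $\mathrm{K}_{k,1}(X,L')$ with $\mathrm{Sym}^{k-1}\mathrm{H}^0(E)$, followed by a proof that $pr_k$ is injective on this space. That injectivity is the entire content of Section \ref{odd-genus}: it requires the elementary transformation producing $S$, the projective bundle geometry of the blow-up, the vanishings of Propositions \ref{most-of-van} and \ref{green-odd-main}, and the compatibility $\phi^{\vee}=pr_k\circ\psi^{\vee}$. (An alternative closure of your gap would be a nonvanishing--propagation statement of Aprodu--Nagel type, $\mathrm{K}_{k,1}(\tilde C,\omega_{\tilde C})\neq 0\Rightarrow\mathrm{K}_{k+1,1}(\tilde C,\omega_{\tilde C}(p+q))\neq 0$, but that is itself a nontrivial theorem whose hypotheses on $p,q$ you would have to verify for the points imposed by the K3 geometry; it is not what this paper does.) As written, your proof establishes the odd--genus case only up to its hardest step.
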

\smallskip

 Whilst we use several of the same objects from \cite{V2}, such as nodal K3 surfaces, our proof ends us being more economical. In particular, we do away with the explicit computations of \cite[\S 2]{V2} as well as the difficult analysis of the geometry of the Grassmannian from \cite[\S 3, ``Fourth Step'']{V2}, considered by Voisin to be the most essential part of her proof. Furthermore, the application of our techniques to nodal K3 surfaces of even genus is far simpler in our setting.\smallskip

In fact, our approach proves much more than Voisin's theorem for even genus curves. Let $(X,L)$ be any polarized variety. A natural question, with roots going back to Andreotti--Mayer's 1967 paper \cite{andreotti-mayer}, is whether one can find a spanning set of the spaces $\mathrm{K}_{p,1}(X,L)$ consisting of elements of low \emph{rank}. Here one defines the rank of a syzygy $\alpha \in \mathrm{K}_{p,1}(X,L)$ as the dimension of the minimal subspace
$V \seq \mathrm{H}^0(X,L)$ such that $\alpha \in \mathrm{K}_{p,1}(X,L,V)$, i.e.\ such that $\alpha$ is represented by an element of $\bigwedge^p V \otimes \mathrm{H}^0(X,L)$, \cite{bothmer-JPAA}. Syzygies of low rank have geometric meaning. For $\alpha \in \mathrm{K}_{p,1}(X,L)$ we always have $\rm{rank}(\alpha) \geq p+1$. If there exists a syzygy $\alpha$ with $\rm{rank}(\alpha)=p+1$, then $X$ lies on a rational normal scroll. More precisely, the \emph{syzygy scheme} $$\rm{Syz}(\alpha) \seq \PP(\mathrm{H}^0(L)^{\vee})$$ of $\alpha$, as defined by Green \cite{green-canonical}, defines a scroll. Similarly, syzygies of rank $p+2$ arise from linear sections of Grassmannians. Precisely, $\rm{Syz}(\alpha)$ contains the cone over a section of $\rm{Gr}_2(V^{\vee} \oplus \C)$. \smallskip

In Section \ref{sec2}, we prove the following structure theorem for the last nonzero syzygy space for K3 surfaces of even genus.
\begin{thm} \label{geo-thm-intro}
Let $X$ be a complex K3 surface. Assume $\mathrm{Pic}(X)=\mathbb{Z}[L]$ with $L$ ample of even genus $g=2k$. Let $E$ be the Lazarsfeld-Mukai bundle as above. For any nonzero $s \in \mathrm{H}^0(E)$, the space $\mathrm{K}_{k-1,1}(X,L,\mathrm{H}^0(L \otimes I_{Z(s)}))$ is a one-dimensional subspace of $\mathrm{K}_{k-1,1}(X,L)$.
The morphism
{\small{\begin{align*}
\psi \; : \; \PP(\mathrm{H}^0(E)) & \to \PP(\mathrm{K}_{k-1,1}(X,L))\\
[s] & \mapsto [\mathrm{K}_{k-1,1}(X,L,\mathrm{H}^0(L \otimes I_{Z(s)}))]
\end{align*}}}
is the Veronese embedding of degree $k-2$. In particular, $\psi$ induces a natural isomorphism $\mathrm{Sym}^{k-2}\mathrm{H}^0(X,E) \simeq \mathrm{K}_{k-1,1}(X,L) $.
\end{thm}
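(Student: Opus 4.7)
The strategy is to globalise the subspaces $V_{[s]} := \mathrm{H}^0(X, L \otimes I_{Z(s)})$ into a universal rank-$(k+1)$ bundle $\mathcal{V}$ on $\mathbb{P} := \mathbb{P}(\mathrm{H}^0(X,E)) \simeq \mathbb{P}^{k+1}$, form a universal Koszul complex whose middle cohomology sheaf $\mathcal{K}$ satisfies $\mathcal{K}_{[s]} = \mathrm{K}_{k-1,1}(X,L,V_{[s]})$, and identify $\mathcal{K}$ with an explicit twist of $\mathcal{O}_{\mathbb{P}}$. Taking global sections of the natural morphism $\mathcal{K} \to \mathrm{K}_{k-1,1}(X,L) \otimes \mathcal{O}_{\mathbb{P}}$ will then produce, by a dimension count, the claimed Veronese isomorphism with $\mathrm{Sym}^{k-2}\mathrm{H}^0(E)$.

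To set up $\mathcal{V}$, work on $X \times \mathbb{P}$: the tautological section of $\pi_X^* E \otimes \pi_{\mathbb{P}}^* \mathcal{O}_{\mathbb{P}}(1)$ cuts out a universal zero-scheme $\mathfrak{Z}$, whose Koszul resolution, twisted by $\pi_X^* L$ and using $\det E \simeq L$, reads
$$0 \to \pi_{\mathbb{P}}^* \mathcal{O}_{\mathbb{P}}(-2) \to \pi_{\mathbb{P}}^* \mathcal{O}_{\mathbb{P}}(-1) \otimes \pi_X^* E \to \pi_X^* L \otimes I_{\mathfrak{Z}} \to 0.$$
Pushing down to $\mathbb{P}$, the vanishings $h^1(X, \mathcal{O}_X) = h^1(X, E) = 0$ yield a short exact sequence
$$0 \to \mathcal{O}_{\mathbb{P}}(-2) \to \mathrm{H}^0(X,E) \otimes \mathcal{O}_{\mathbb{P}}(-1) \to \mathcal{V} \to 0$$
defining $\mathcal{V} := \pi_{\mathbb{P} *}(\pi_X^* L \otimes I_{\mathfrak{Z}})$, with fibre $V_{[s]}$ at $[s]$. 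Comparison with the Euler sequence identifies $\mathcal{V} \simeq T_{\mathbb{P}}(-1)$, whence $\wedge^p \mathcal{V} \simeq \Omega_{\mathbb{P}}^{k+1-p}(k+2-p)$; pushing $\pi_X^* L \otimes I_{\mathfrak{Z}} \hookrightarrow \pi_X^* L$ yields a canonical inclusion $\mathcal{V} \hookrightarrow \mathrm{H}^0(L) \otimes \mathcal{O}_{\mathbb{P}}$.

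Next, form on $\mathbb{P}$ the universal Koszul complex
$$\mathcal{A}^{\bullet} \; : \; \wedge^k \mathcal{V} \to \wedge^{k-1} \mathcal{V} \otimes \mathrm{H}^0(L) \to \wedge^{k-2} \mathcal{V} \otimes \mathrm{H}^0(L^{\otimes 2}),$$
with differentials built from the inclusion above and the multiplication $\mathrm{H}^0(L) \otimes \mathrm{H}^0(L^{\otimes q}) \to \mathrm{H}^0(L^{\otimes q+1})$. Each term being locally free, base change gives that its middle cohomology sheaf $\mathcal{K}$ has fibre $\mathrm{K}_{k-1,1}(X, L, V_{[s]})$ at $[s]$, and the inclusions $V_{[s]} \hookrightarrow \mathrm{H}^0(L)$ assemble into a sheaf morphism $\mathcal{K} \to \mathrm{K}_{k-1,1}(X,L) \otimes \mathcal{O}_{\mathbb{P}}$. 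The crucial step is then to establish $\mathcal{K} \simeq \mathcal{O}_{\mathbb{P}}(2-k)$. Having recast $\mathcal{A}^{\bullet}$ via the $\Omega^p_{\mathbb{P}}$-identifications, one analyses its cohomology using Bott's formula for $\mathrm{H}^*(\mathbb{P}, \Omega^p(j))$ together with the automatic nondegeneracy principle (Proposition~\ref{auto-injectivity}) that already drove the paper's reproof of Voisin's Theorem. A generic-fibre computation, combining Voisin's vanishing $\mathrm{K}_{k,1}(X,L) = 0$ with the Hilbert series of $\bigoplus_n \mathrm{H}^0(L^{\otimes n})$, forces $\dim \mathcal{K}_{[s]} = 1$; torsion-freeness then makes $\mathcal{K}$ a line bundle on $\mathbb{P}^{k+1}$, whose degree $2-k$ is read off from the weights of the terms of $\mathcal{A}^{\bullet}$.

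Once $\mathcal{K} \simeq \mathcal{O}_{\mathbb{P}}(2-k)$, the sheaf morphism $\mathcal{K} \to \mathrm{K}_{k-1,1}(X,L) \otimes \mathcal{O}_{\mathbb{P}}$ is equivalent, by adjunction, to a linear map $\mathrm{K}_{k-1,1}(X,L)^{\vee} \to \mathrm{H}^0(\mathbb{P}, \mathcal{O}_{\mathbb{P}}(k-2)) \simeq \mathrm{Sym}^{k-2} \mathrm{H}^0(E)^{\vee}$; both sides have dimension $\binom{2k-1}{k-2}$ (the left by a Hilbert-series computation using Voisin's vanishing), so the map is an isomorphism. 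Dualising produces $\mathrm{Sym}^{k-2}\mathrm{H}^0(E) \simeq \mathrm{K}_{k-1,1}(X,L)$, and $\psi$ is the $(k-2)$-uple Veronese; the one-dimensionality of each $\mathrm{K}_{k-1,1}(X,L, V_{[s]})$ is the fibre-wise statement. \emph{The main obstacle} is the global identification $\mathcal{K} \simeq \mathcal{O}_{\mathbb{P}}(2-k)$: generic rank one follows cleanly from Voisin's vanishing plus Hilbert-series bookkeeping, but pinning down both that $\mathcal{K}$ is locally free and that its degree is $2-k$ requires careful cohomological control of $\mathcal{A}^{\bullet}$ on $\mathbb{P}$, and this is precisely where the ``automatic'' symmetric-power injectivity developed for Voisin's Theorem is expected to reenter decisively.
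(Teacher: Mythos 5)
Your overall architecture — universalize the subspaces $\mathrm{H}^0(L\otimes I_{Z(s)})$ over $\PP(\mathrm{H}^0(E))$ via the bundle $\mathcal{V}=q_*(p^*L\otimes I_{\mathcal{Z}})$, exhibit the relevant syzygy data as a line bundle of degree $\pm(k-2)$ on $\PP$, and read off the Veronese — is genuinely the same shape as the paper's argument. But the proposal has a decisive gap at exactly the point where the real work lies. You reduce to a linear map $\lambda:\mathrm{K}_{k-1,1}(X,L)^{\vee}\to \mathrm{H}^0(\PP,\mathcal{O}_{\PP}(k-2))$ and conclude it is an isomorphism because ``both sides have dimension $\binom{2k-1}{k-2}$.'' Equal dimensions do not make a linear map bijective, and nothing in your argument rules out $\lambda$ having a kernel: even granting that $\mathcal{K}\simeq\mathcal{O}_{\PP}(2-k)$ and that $\mathcal{K}\to \mathrm{K}_{k-1,1}(X,L)\otimes\mathcal{O}_{\PP}$ is injective on every fibre, this only says the linear subsystem $\mathrm{im}(\lambda)\subseteq\mathrm{H}^0(\mathcal{O}_{\PP}(k-2))$ is base-point free (compare $\mathcal{O}_{\PP^1}(-2)\xrightarrow{(x^2,\,y^2)}\mathcal{O}_{\PP^1}^{\oplus 2}$), not that it is everything. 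Injectivity of $\lambda$ is equivalent to the statement that the one-dimensional spaces $\mathrm{K}_{k-1,1}(X,L,\mathrm{H}^0(L\otimes I_{Z(s)}))$ span $\mathrm{K}_{k-1,1}(X,L)$ — which is essentially the theorem itself. The paper supplies this via Theorem \ref{real-thm}, a chain of K\"unneth vanishings proving that $\mathrm{H}^1(B,\wedge^k\pi^*\mathcal{M}\otimes p'^*L)\to\mathrm{H}^1(B,\wedge^k\Gamma\otimes p'^*L)$ is surjective; surjectivity on global sections plus global generation of $\mathcal{O}_{\PP}(k-2)$ then yields sheaf-level surjectivity and hence the Veronese. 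Your proposal contains no substitute for this step.

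The other load-bearing claims are also deferred rather than proved. The identification $\mathcal{K}\simeq\mathcal{O}_{\PP}(2-k)$ is announced as ``the main obstacle'' and left to an expected reappearance of Proposition \ref{auto-injectivity}; note that the fibre of the middle cohomology sheaf of a three-term complex of bundles is not the middle cohomology of the fibre complex without cohomology-and-base-change hypotheses, and the asserted one-dimensionality of $\mathrm{K}_{k-1,1}(X,L,V_{[s]})$ for \emph{every} $s$ does not follow from Voisin's vanishing plus Hilbert-series bookkeeping, since the Euler characteristic of the restricted complex $\wedge^k V\to\wedge^{k-1}V\otimes\mathrm{H}^0(L)\to\wedge^{k-2}V\otimes\mathrm{H}^0(L^{\otimes 2})$ computes the middle cohomology only after one kills the outer cohomologies, which requires its own argument (the paper handles this fibrewise via the kernel-bundle description on the blow-up $B_t$ and Serre duality). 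Finally, a small but consequential slip: your exact sequence $0\to\mathcal{O}_{\PP}(-2)\to\mathrm{H}^0(E)\otimes\mathcal{O}_{\PP}(-1)\to\mathcal{V}\to 0$ gives $\mathcal{V}\simeq T_{\PP}(-2)$, not $T_{\PP}(-1)$ (so $\det\mathcal{V}\simeq\mathcal{O}_{\PP}(-k)$); any degree bookkeeping built on the stated identification would come out wrong.
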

A consequence of Theorem \ref{geo-thm-intro} is that $\mathrm{K}_{k-1,1}(X,L)$ is spanned by syzygies of rank $k+1=\dim \mathrm{H}^0(L \otimes I_{Z(s)})$. Theorem \ref{geo-thm-intro} implies the $\mathrm{K}_{k,1}(X,L)=0$ by standard dimension computations, \cite[\S 4.1]{farkas-progress}, and thus enhances Voisin's Theorem in even genus. One may compare Theorem \ref{geo-thm-intro} to Schreyer's Conjecture, proven in \cite{lin-syz}, describing the structure of the last syzygy space for curves of \emph{non-maximal} gonality.\smallskip


Theorem \ref{geo-thm-intro} implies a previously open conjecture known as the \emph{Geometric Syzygy Conjecture} in even genus, see \cite{bothmer-Transactions} where the statement is proven for $g \leq 8$.  To put this conjecture in context, recall the following important result:
 \begin{thm}[\cite{andreotti-mayer}, \cite{green-quadrics}] \label{green-quads}
 The ideal $I_{C/\PP^{g-1}}$ of a canonical curve of Clifford index at least two is generated by quadrics of rank four.
 \end{thm}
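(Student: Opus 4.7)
The strategy is to combine the Noether--Petri theorem with a geometric construction of rank four quadrics from pencils on $C$. Since the assumption $\mathrm{Cliff}(C) \geq 2$ rules out hyperelliptic, trigonal, and smooth plane quintic curves, Noether--Petri ensures that $I_{C/\mathbb{P}^{g-1}}$ is generated in degree two. It therefore suffices to show that the space of quadrics $I_{C,2} = \mathrm{K}_{1,1}(C,\omega_C)$ is linearly spanned by its rank four elements.

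The rank four quadrics come from the base-point-free pencil trick. For each line bundle $M$ on $C$ with $h^0(M) \geq 2$ and $h^0(\omega_C \otimes M^{\vee}) \geq 2$, and for chosen two-dimensional subspaces $V = \langle s_0, s_1 \rangle \subseteq \mathrm{H}^0(M)$ and $W = \langle t_0, t_1 \rangle \subseteq \mathrm{H}^0(\omega_C \otimes M^{\vee})$, the element
$$Q_{V,W} = (s_0 t_0)(s_1 t_1) - (s_0 t_1)(s_1 t_0) \in \mathrm{Sym}^2 \mathrm{H}^0(\omega_C)$$
restricts to zero on $C$ (both terms equal $s_0 s_1 t_0 t_1 \in \mathrm{H}^0(\omega_C^{\otimes 2})$) and, being the $2 \times 2$ determinant of the matrix $(s_i t_j)_{i,j}$, has rank at most four as a symmetric form. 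Geometrically, the line bundle $M$ produces a rational normal scroll $\Sigma \supseteq C$ in $\mathbb{P}^{g-1}$ and $Q_{V,W}$ is one of its defining quadrics.

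The heart of the argument is the spanning claim. A natural approach, in the spirit of the present paper, is via Lazarsfeld's kernel bundle $M_{\omega_C}$, defined by $0 \to M_{\omega_C} \to \mathrm{H}^0(\omega_C) \otimes \mathcal{O}_C \to \omega_C \to 0$. Through the associated Koszul complex, $\mathrm{K}_{1,1}(C,\omega_C)$ embeds naturally into $\mathrm{H}^0(C, \bigwedge^2 M_{\omega_C}^{\vee} \otimes \omega_C)$. Each choice of pencil $M$ corresponds to a rank two sub-bundle of $M_{\omega_C}^{\vee}$, and the quadrics $Q_{V,W}$ appear as decomposable sections of the resulting rank two quotients. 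Spanning $I_{C,2}$ by rank four quadrics then becomes the statement that these decomposable sections, summed over all valid pencils $M$, exhaust the image of $\mathrm{K}_{1,1}(C,\omega_C)$.

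The main obstacle is this final spanning statement, and it is precisely here that $\mathrm{Cliff}(C) \geq 2$ is essential. The hypothesis guarantees the existence of enough pencils to cover $C$ and, more subtly, prevents the existence of a quadric in $I_{C,2}$ orthogonal to all $Q_{V,W}$ but not itself of rank four; any such obstruction would correspond to a Petri-type relation incompatible with the Clifford bound. A clean way to conclude is to dualize via Green's duality for Koszul cohomology on a canonical curve, converting the spanning problem into the vanishing of a concrete cohomology group which, under the assumption $\mathrm{Cliff}(C) \geq 2$, is accessible by direct computation on $C$.
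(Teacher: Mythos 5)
This theorem is not proved in the paper at all: it is quoted from Andreotti--Mayer and from Green's paper \emph{Quadrics of rank four in the ideal of a canonical curve}, so there is no internal proof to compare against. Judged on its own, your proposal correctly disposes of the routine parts. The reduction via Noether--Petri to showing that $I_2(C)=\mathrm{K}_{1,1}(C,\omega_C)$ is spanned by rank-four elements is right, and the determinantal construction of $Q_{V,W}$ from a decomposition $\omega_C\simeq M\otimes(\omega_C\otimes M^{\vee})$ with $h^0(M)\geq 2$, $h^0(\omega_C\otimes M^{\vee})\geq 2$ is the standard and correct source of rank-four quadrics (equivalently, the tangent cones to $\Theta$ at points of $W^1_{g-1}(C)$, which is the Andreotti--Mayer picture the introduction of this paper alludes to).

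However, the spanning claim \emph{is} the theorem, and your final paragraph does not prove it --- it only describes the shape a proof would have to take. Concretely: you speak of a quadric ``orthogonal to all $Q_{V,W}$'' without specifying the pairing; you assert that such an obstruction ``would correspond to a Petri-type relation incompatible with the Clifford bound'' with no argument; and you say the dual statement is ``the vanishing of a concrete cohomology group\ldots accessible by direct computation'' without identifying the group or performing the computation. This is precisely where all the difficulty lives. Note that Brill--Noether existence only guarantees pencils of degree about $g/2+1$, so one must show that the finitely-many-dimensional family of quadrics $Q_{V,W}$ arising from the loci $W^1_d(C)$ spans the $\binom{g-2}{2}$-dimensional space $I_2(C)$; Green's actual proof is a genuine induction (projecting from points of $C$ and controlling how rank-four quadrics lift), and the hypothesis $\mathrm{Cliff}(C)\geq 2$ enters to keep that induction alive, not merely to exclude the trigonal and plane-quintic exceptions to Petri. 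As written, your argument establishes that rank-four quadrics exist in $I_2(C)$, not that they generate it.
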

 This provides an enhancement of Petri's theorem stating that $I_{C/\PP^{g-1}}$ is generated by quadrics if $\mathrm{Cliff}(C) \geq 2$. More precisely, let $W^1_{g-1}(C)$ be the locus of line bundles $A$ of degree $g-1$ with two sections; geometrically this is the locus of double points of the Theta Divisor $\Theta \seq \mathrm{Pic}^{g-1}(C)$. Then $I_{C/\PP^{g-1}}$ is generated by the quadrics defined by the Petri map
 $$\mathrm{H}^0(A) \otimes \mathrm{H}^0(\omega_C \otimes A^{-1}) \to \mathrm{H}^0(\omega_C)$$
 for $A \in W^1_{g-1}(C)$. To extend Theorem \ref{green-quads} to higher syzygies, note that Theorem \ref{green-quads} can be rephrased as stateing that $\mathrm{K}_{1,1}(C,\omega_C)$ is spanned by the spaces $\mathrm{K}_{1,1}(C,\omega_C, \mathrm{H}^0(\omega_C \otimes A^{-1} ))$ for $A \in W^1_{g-1}(C)$, and hence by syzygies of rank \emph{two}.\smallskip

If we restrict the subspaces $\mathrm{K}_{k-1,1}(X,L,\mathrm{H}^0(L \otimes I_{Z(s)})) \seq \mathrm{K}_{k-1,1}(X,L)$ to a curve $C \in |L|$ containing the locus $Z(s)$, then $\mathrm{K}_{k-1,1}(X,L,\mathrm{H}^0(L \otimes I_{Z(s)}))$ restricts to 
$$\mathrm{K}_{k-1,1}(C,\omega_C,\mathrm{H}^0(\omega_C \otimes A^{-1})) \seq \mathrm{K}_{k-1,1}(C,\omega_C)$$ where $A \in W^1_{k+1}(C)$ is the line bundle defined by the divisor $Z(s) \seq C$. Thus, the rank $k+1$ syzygies $\alpha \in \mathrm{K}_{k-1,1}(X,L,\mathrm{H}^0(L \otimes I_{Z(s)}))$ drop rank by one when restricted to $C$. Combining this with an argument of Voisin,  see \cite[Prop.\ 7]{V1} and the unpublished \cite[\S 11]{bothmer-preprint} we obtain:
\begin{cor}[Geometric Syzygy Conjecture in Even Genus] \label{geo-cor}
Let $C$ be a general curve of even genus $g=2k$. Then $\mathrm{K}_{k-1,1}(C,\omega_C)$ is generated by syzygies of the lowest possible rank $k$.  More precisely, $\mathrm{K}_{k-1,1}(C,\omega_C)$ is generated by the rank $k$ syzygies $$\alpha \in \mathrm{K}_{k-1,1}(C,\omega_C,\mathrm{H}^0(\omega_C \otimes A^{-1})), \; \; A \in W^1_{k+1}(C).$$
\end{cor}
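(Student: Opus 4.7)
The plan is to derive Corollary \ref{geo-cor} from Theorem \ref{geo-thm-intro} by transferring the K3 structure theorem to a canonical curve $C \in |L|$, and then packaging the result via the arguments of \cite[Prop.\ 7]{V1} and \cite[\S 11]{bothmer-preprint}. I would first realize a general curve $C$ of even genus $g = 2k$ as a smooth member of $|L|$ on a polarized K3 surface $(X, L)$ with $\mathrm{Pic}(X) = \mathbb{Z}[L]$ and $L^2 = 2g-2$, so that $L|_C \cong \omega_C$. For those even genera where such K3 curves are not Zariski dense in $M_g$, I would extend by standard semicontinuity, using that Voisin's theorem $\mathrm{K}_{k,1}=0$ makes $\mathrm{K}_{k-1,1}$ a vector bundle on a nonempty open locus and that ``generation by rank-$k$ syzygies'' is itself Zariski open.

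Next, I would invoke Theorem \ref{geo-thm-intro}: the space $\mathrm{K}_{k-1,1}(X, L)$ is spanned by the one-dimensional rank-$(k+1)$ subspaces $\alpha_s := \mathrm{K}_{k-1,1}(X, L, \mathrm{H}^0(L \otimes I_{Z(s)}))$ as $s$ ranges over $\mathrm{H}^0(E)$. For any $s$ with $Z(s) \subset C$ (achieved generically by moving $C$ within $|L|$), the divisor $Z(s)$ defines a line bundle $A := \mathcal{O}_C(Z(s)) \in W^1_{k+1}(C)$ with $h^0(A) = 2$. From the short exact sequence
$$0 \to \mathcal{O}_X \to L \otimes I_{Z(s)} \to i_*(\omega_C \otimes A^{-1}) \to 0$$
on $X$, combined with $\mathrm{H}^1(X, \mathcal{O}_X) = 0$, one reads off that the image of the restriction $\mathrm{H}^0(X, L \otimes I_{Z(s)}) \to \mathrm{H}^0(C, \omega_C)$ is precisely $\mathrm{H}^0(\omega_C \otimes A^{-1})$, of dimension $k$ by Riemann--Roch. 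Thus $\alpha_s|_C$ lives in $\mathrm{K}_{k-1,1}(C, \omega_C, \mathrm{H}^0(\omega_C \otimes A^{-1}))$ and has rank at most $k$, the minimum allowed by the lower bound $\mathrm{rank}(\alpha) \geq p+1$ of \cite{bothmer-JPAA}.

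The main remaining step is to show that the subspaces $\mathrm{K}_{k-1,1}(C, \omega_C, \mathrm{H}^0(\omega_C \otimes A^{-1}))$ for $A \in W^1_{k+1}(C)$ together span $\mathrm{K}_{k-1,1}(C, \omega_C)$. Following \cite[Prop.\ 7]{V1} and \cite[\S 11]{bothmer-preprint}, this combines two inputs: first, the restriction map $\mathrm{K}_{k-1,1}(X, L) \to \mathrm{K}_{k-1,1}(C, \omega_C)$ is surjective, which follows from Green's Koszul hyperplane restriction sequence together with Green's duality on the K3 surface, reducing the vanishing of the cokernel to Voisin's theorem $\mathrm{K}_{k,1}(X, L) = 0$ (reproved in the paper); second, by varying $s$ over the subset $\{s \in \mathrm{H}^0(E) : Z(s) \subset C\}$ and using a dimension/monodromy argument, the images $\alpha_s|_C$ already fill out the entire image of the restriction.

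I expect the main obstacle to lie in this last combination: one must track which $\alpha_s$ drop rank on restriction and verify that, together with surjectivity, they exhaust $\mathrm{K}_{k-1,1}(C, \omega_C)$ rather than only a proper subspace. Once this is in hand, every syzygy in $\mathrm{K}_{k-1,1}(C, \omega_C)$ is a sum of the rank-$k$ syzygies $\alpha_s|_C \in \mathrm{K}_{k-1,1}(C, \omega_C, \mathrm{H}^0(\omega_C \otimes A^{-1}))$ associated with line bundles $A = \mathcal{O}_C(Z(s)) \in W^1_{k+1}(C)$, which is precisely the statement of Corollary \ref{geo-cor}.
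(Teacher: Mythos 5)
Your architecture matches the paper's: restrict Theorem \ref{geo-thm-intro} to a curve $C\in|L|$, identify the restricted subspaces with $\mathrm{K}_{k-1,1}(C,\omega_C,\mathrm{H}^0(\omega_C\otimes A^{-1}))$, and invoke \cite[Prop.\ 7]{V1} and \cite[\S 11]{bothmer-preprint} for the spanning. But the step you flag as ``the main obstacle'' --- that the classes $\alpha_s$ with $Z(s)\subset C$ already exhaust $\mathrm{K}_{k-1,1}(C,\omega_C)$ --- is exactly the content of the proof, and ``a dimension/monodromy argument'' is not how it is closed. The missing idea is that the Veronese structure of Theorem \ref{geo-thm-intro} converts the spanning question into a concrete statement about symmetric powers: under the isomorphism $\mathrm{K}_{k-1,1}(X,L)\simeq \mathrm{Sym}^{k-2}\mathrm{H}^0(E)$, the span of the lines $\alpha_s$ for $s$ ranging over $\mathrm{H}^0(A)^{\vee}\subseteq\mathrm{H}^0(E)$, $A\in W^1_{k+1}(C)$, is precisely the span of the subspaces $\mathrm{Sym}^{k-2}\mathrm{H}^0(A)^{\vee}$. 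One then uses the finite surjective map $d:\mathrm{Gr}_2(\mathrm{H}^0(E))\to|L|$ with $d^{-1}(C)\simeq W^1_{k+1}(C)$ (reduced for general $C$), and Voisin's computation in the proof of \cite[Prop.\ 7]{V1} that the $\mathrm{Sym}^{k-2}\mathrm{H}^0(A)^{\vee}$ generate $\mathrm{Sym}^{k-2}\mathrm{H}^0(E)$; equivalently, the union of lines $T=\bigcup_A T_A$ in $\PP(\mathrm{H}^0(E))$ has non-degenerate image under the degree-$(k-2)$ Veronese $\widetilde{\psi}$. Without routing the argument through this identification, your ``second input'' is an assertion, not a proof.

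Two smaller points. First, the restriction $\mathrm{K}_{k-1,1}(X,L)\to\mathrm{K}_{k-1,1}(C,\omega_C)$ is an isomorphism by the standard hyperplane-section theorem for Koszul cohomology on a K3 (using only $\mathrm{H}^1(\mathcal{O}_X)=0$ and $\mathrm{H}^1(L^{\otimes i})=0$); you do not need Voisin's vanishing $\mathrm{K}_{k,1}(X,L)=0$ for surjectivity, and phrasing it as only a surjection obscures that the problem is entirely transferred to the K3 side. Second, your identification of $\mathrm{H}^0(\omega_C\otimes A^{-1})$ as the image of $\mathrm{H}^0(L\otimes I_{Z(s)})$ and the resulting rank drop from $k+1$ to $k$ is correct and agrees with the paper. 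The openness/semicontinuity caveat you raise for genera where K3 curves are not dense in $M_g$ is a legitimate concern that the paper itself does not address explicitly, but it is peripheral to the main argument.
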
 
Corollary \ref{geo-cor} therefore provides an extension of Green's theorem on quadrics \cite{green-quadrics} to the space of linear syzygies of highest order.\smallskip 

It would appear to us to be very difficult to adapt degeneration methods to prove the structure Theorem \ref{geo-thm-intro}, as opposed to merely establishing the vanishing from Voisin's original result. For instance, the construction of Lazarsfeld--Mukai bundles fails on the (non-normal) tangent developable, so that it is not even clear how the bundle $E$ degenerates to this surface. \smallskip

There are no known conjectural candidates for an analogous result to Theorem \ref{geo-thm-intro} in odd genus $g=2k+1$. In this case, the dimension of $\mathrm{K}_{k-1,1}(X,L)$ is not given by a binomial coefficient, so this space cannot be of the form $\mathrm{Sym}^p(V)$ for any vector space $V$. Furthermore, we no longer have uniqueness of the relevant Lazarsfeld-Mukai bundle in this situation.\smallskip

Our argument is formal, using general results on vector bundles rather than a detailed study of the geometry of curves. Consequently, we expect our approach to generalize well to higher dimensional varieties, for which previous approaches to Green's conjecture seem less applicable. See \cite{ein-laz-arb-dim} for applications of vector bundle methods to syzygies of varieties of high dimension.\smallskip

\textbf{Acknowledgements} I thank C.\ Voisin for helpful explanations and G.\ Farkas for numerous discussions. I thank R.\ Lazarsfeld for encouragement and for detailed comments. I thank M.\ Aprodu, J.\ Ellenberg, D.\ Erman, D.\ Huybrechts, J.\ Rathmann, E.\ Sernesi and R.\ Yang for feedback on previous versions.\smallskip

I thank the referee for a careful reading and for comments which greatly improved the exposition. The author is supported by NSF grant DMS-1701245.

\subsection{Preliminaries}
We work over the complex numbers throughout. We gather here a few facts. We have a natural isomorphism $(\mathrm{Sym}^j F)^{\vee} \simeq \mathrm{Sym}^j F^{\vee}$ for any vector bundle $F$ on a variety $X$ defined over $\C$ (over arbitrary fields, this statement requires the characteristic to be at least $j+1$). Let $0 \to F_1 \to F_2 \to F_3 \to 0$ be an exact sequence of bundles on $X$. If $p=0$ or $p \geq i+1$ then, 
from \cite[\S V]{ABW}, we have exact sequences
{\small{\begin{align*}
&\ldots \to \bigwedge^{i-2} F_2 \otimes \mathrm{Sym}^2(F_1) \to \bigwedge^{i-1} F_2 \otimes F_1  \to \bigwedge^i F_2 \to \bigwedge^i F_3 \to 0, \\
& \ldots \to \mathrm{Sym}^{i-2}F_2 \otimes \bigwedge^2 F_1 \to \mathrm{Sym}^{i-1}F_2 \otimes F_1 \to \mathrm{Sym}^{i}F_2 \to \mathrm{Sym}^{i}F_3 \to 0. 
\end{align*}}}
 We may further dualize $0 \to F_1 \to F_2 \to F_3 \to 0$, form the second exact sequence above and dualize again to obtain an exact sequence
{\small{$$
		0 \to \mathrm{Sym}^i(F_1) \to \mathrm{Sym}^{i}(F_2) \to \mathrm{Sym}^{i-1}(F_2) \otimes F_3 \to \mathrm{Sym}^{i-2}(F_2) \otimes \bigwedge^2 F_3 \to \ldots
$$}}
Let $f: X \to Y$ be a morphism of varieties and $\mathcal{F} \in \mathrm{Coh}(X)$ a sheaf. If $\mathcal{E}$ is a vector bundle on $Y$ then we have the \emph{Projection Formula} $\mathrm{R}^if_*(\mathcal{F} \otimes f^* \mathcal{E}) \simeq \mathrm{R}^if_*\mathcal{F} \otimes \mathcal{E}$, \cite[III, Ex.\ 8.3]{hartshorne}. In particular, $f_*(\mathcal{F} \otimes f^* \mathcal{E}) \simeq f_*\mathcal{F} \otimes \mathcal{E}$. If $\mathrm{R}^if_*\mathcal{F}=0$ for all $ i>0$ then $\mathrm{H}^p(X,\mathcal{F}) \simeq \mathrm{H}^p(Y, f_* \mathcal{F})$ for $p \geq 0$, \cite[III, Ex.\ 8.1 ]{hartshorne}. If $X, Y$ are varieties and $\mathcal{F} \in \mathrm{Coh}(X), \mathcal{G} \in \mathrm{Coh}(Y)$ are sheaves, the \emph{K\"unneth formula} states
 {\small{$$\mathrm{H}^{m}(X \times Y, \mathcal{F} \boxtimes \mathcal{G}) \simeq \bigoplus_{a+b=m} \mathrm{H}^a(X,\mathcal{F}) \otimes \mathrm{H}^b(Y, \mathcal{G}),$$}}
where $\mathcal{F} \boxtimes \mathcal{G}:=p^*\mathcal{F} \otimes q^* \mathcal{G}$, for projections $p: X \times Y \to X$, $q: X \times Y \to Y$.\smallskip
 
 Assume we have an exact sequence $0 \to \mathcal{F} \to \mathcal{G} \to \mathcal{H} \to 0$ of coherent sheaves on a quasi-projective variety, with $\mathcal{G}$ locally free. Assume either $\mathcal{H}$ is locally free or $\mathcal{H} \simeq \mathcal{O}_D$ for a Cartier divisor $D$. Then $\mathcal{F}$ is locally free. This follows from \cite[III, Ex 6.5]{hartshorne}.

\section{Voisin's Theorem in Even Genus} \label{sec1}
Let $X$ be a K3 surface of Picard rank one and even genus $g=2k$. Let $M_L$ denote the bundle defined by the sequence $0 \to M_L \to \mathrm{H}^0(L)\otimes \mathcal{O}_X \to L \to 0.$ 
By the Kernel Bundle description of Koszul cohomology, see \cite{lazarsfeld-VBT} or \cite[\S 3]{ein-lazarsfeld-asymptotic}, to prove Voisin's Theorem it suffices to show $$\mathrm{H}^1(X, \bigwedge^{k+1} M_L )=0.$$
Consider the unique rank two, Lazarsfeld--Mukai, bundle $E$ on $X$ as in the introduction. For general $s \in \mathrm{H}^0(E)$, the zero-locus $Z(s)$ corresponds to a $g^1_{k+1}$ on a smooth $C \in |L|$. 
For \emph{any} $s \in \mathrm{H}^0(E)$, $Z(s) \seq X$ is zero-dimensional and we have an exact sequence $$0 \to \mathcal{O}_X \xrightarrow{s} E \xrightarrow{\wedge s} I_{Z(s)} \otimes L \to 0.$$

Our starting point will be the \emph{Secant Sheaf} approach to the study of syzygies, which was originally utilized by Green and Lazarsfeld to give a simple proof of Petri's Theorem \cite{green-laz-petri}. For any $t \in \mathrm{H}^0(E)$, define the vector space $W_t = \mathrm{H}^0(L)/ \mathrm{H}^0(L \otimes I_{Z(t)}).$
We define the \emph{secant sheaves} associated to the secant locus $Z(t)$ by
{\small{\begin{align*}
\Gamma'_t &:=\mathrm{Ker}\left(W_t \otimes \mathcal{O}_X \to L_{|_{Z(t)}} \right),\\
\mathcal{S}'_t &:=\mathrm{Ker}\left( \mathrm{H}^0(L \otimes I_{Z(t)}) \otimes \mathcal{O}_X \to L \otimes I_{Z(t)} \right).
\end{align*}}}
Since $L \otimes I_{Z(t)}$ is a quotient of the globally generated vector bundle $E$, the map $\mathrm{H}^0(L \otimes I_{Z(t)}) \otimes \mathcal{O}_X \to L \otimes I_{Z(t)}$ is surjective. These sheaves are related to $M_L$ via the commutative diagram
{\small{$$\begin{tikzcd}
& 0 \arrow[d] & 0 \arrow[d]  & 0 \arrow[d] \\
0 \arrow[r] & \mathcal{S}'_t \arrow[r] \arrow[d] & \mathrm{H}^0(L \otimes I_{Z(t)}) \otimes \mathcal{O}_X  \arrow[r] \arrow[d] &  L \otimes I_{Z(t)} \arrow[r] \arrow[d] &0\\
0 \arrow[r] & M_L \arrow[r] \arrow[d] & \mathrm{H}^0(L) \otimes \mathcal{O}_X  \arrow[r] \arrow[d]  &  L \arrow[r] \arrow[d]& 0\\
0 \arrow[r] & \Gamma'_t \arrow[r] \arrow[d] & W_t \otimes \mathcal{O}_X  \arrow[r] \arrow[d]  &  L_{|_{Z(t)}}  \arrow[r] \arrow[d] & 0\\
& 0 & 0 & 0 &
\end{tikzcd},$$}}
with exact rows and columns. By considering the first column, 
$$0 \to \mathcal{S}'_t  \to M_L \to \Gamma'_t \to 0$$
one can relate syzygies of $X$ to the cohomological properties of the sheaves $\Gamma'_t$ and $\mathcal{S}'_t$. This is a special case of the approach taken in the influential paper \cite{ein-lazarsfeld-asymptotic}. Two problems, however, present themselves. Firstly, since $Z(t)$ is not a divisor, the sheaf $\Gamma'_t$ is not locally free. To resolve this, let $\pi_t : B_t \to X$ be the blow-up in the local complete intersection $Z(t)$, with exceptional divisor $D_t$. Identify $W_t$ with $\mathrm{H}^0(B_t,\pi_t^*L)/ \mathrm{H}^0(B_t, \pi_t^*L\otimes I_{D_t})$ and define \emph{vector bundles}
{\small{\begin{align*}
\Gamma_t &:=\mathrm{Ker}\left(W_t \otimes \mathcal{O}_{B_t} \to \pi_t^*L_{|_{D_t}} \right),\\
\mathcal{S}_t &:=\mathrm{Ker}\left( \mathrm{H}^0(\pi_t^*L \otimes I_{D_t}) \otimes \mathcal{O}_B \to L \otimes I_{D_t} \right).
\end{align*}}}
One may now try to work with the sequence $0 \to \mathcal{S}_t  \to \pi_t^*M_L \to \Gamma_t \to 0$. However, each section $t \in \mathrm{H}^0(E)$ comes on an equal footing. It is therefore more natural to work with all of the sections \emph{simultaneously}.\smallskip

We now adapt the above construction with these considerations in mind. Set $\PP:=\PP(\mathrm{H}^0(E))\simeq \PP^{k+1}$. Consider $X \times \PP$ with projections $p: X \times \PP \to X$, $q: X \times \PP \to \PP$. Define $\mathcal{Z} \seq X \times \PP$ as the locus $\left\{ (x,s) \; | \; s(x)=0 \right\}$. Since $E$ is globally generated, $\mathcal{Z}$ is a projective bundle over $X$ and hence smooth. We have an exact sequence
$$0 \to \mathcal{O}_X \boxtimes \mathcal{O}_{\PP}(-2) \xrightarrow{\mathrm{id}} E \boxtimes \mathcal{O}_{\PP}(-1) \to p^*L \otimes I_{\mathcal{Z}} \to 0,$$
where the first nonzero map is given by multiplication by $$\mathrm{id} \in \mathrm{H}^0\left(E \boxtimes \mathcal{O}_{\PP}(1)\right) \simeq \mathrm{H}^0(E) \otimes \mathrm{H}^0(E)^{\vee} \simeq \mathrm{Hom}\left(\mathrm{H}^0(E), \mathrm{H}^0(E)\right).$$ 
Note that $\mathcal{Z} \to \PP$ is finite and flat, by the ``miracle flatness" theorem \cite[Prop.\ 6.1.5]{EGA}.

\begin{rem}
As soon as there exists a nontrivial, effective divisor $C$ on $X$ with $\mathrm{H}^0(E(-C))$ nonzero, then $\mathcal{Z} \to \PP$ cannot be finite and flat. For this reason, it is essential that $\mathrm{Pic}(X) \simeq \mathbb{Z}[L]$.
\end{rem}
\smallskip

Let $\mathcal{M}:=p^* M_L$. Note $\mathrm{H}^1(X \times \PP,\bigwedge^{k+1}\mathcal{M}) \simeq \mathrm{H}^1(X, \bigwedge^{k+1}M_L) $ by the K\"unneth formula, as $\mathrm{H}^1(\mathcal{O}_X)=0$. Let $\pi:B \to  X \times \PP $ be the blow-up along $\mathcal{Z}$ with exceptional divisor $D$. Then 
$\pi_*\mathcal{O}_B \simeq \mathcal{O}_{X \times \PP}, \; \; \pi_* I_{D} \simeq I_{\mathcal{Z}} \; \; \text{and} \; \; \mathrm{R}^i \pi_*\mathcal{O}_B=\mathrm{R}^i\pi_* I_D=0 \; \text{for $i>0$},$
cf.\ \cite[V, Prop.\ 3.4 and Ex.\ 3.1]{hartshorne}. Set $p':=p \circ \pi$, $q':=q \circ \pi$. We have canonical identifications
$$q'_*({p'}^*L \otimes I_D) \simeq q_*(p^*L \otimes I_{\mathcal{Z}}), \; \; \; \; \; \; \; q'_*{p'}^*L  \simeq q_* p^*L.$$
Consider $\mathcal{W}:=\text{Coker}\left(q'_*({p'}^*L \otimes I_D) \to q'_*{p'}^*L\right)\simeq  \text{Coker}\left(q_*({p}^*L \otimes I_{\mathcal{Z}}) \to q_*{p}^*L\right).$
\begin{lem} \label{very-first-lem}
The sheaf $\mathcal{W}$ is locally free of rank $k$.
\end{lem}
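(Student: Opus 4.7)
The plan is to realize $\mathcal{W}$ as the kernel of a surjection between two locally free sheaves on $\PP$ whose ranks differ by $k$.

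First, I would push forward the short exact sequence
$$0 \to p^*L \otimes I_{\mathcal{Z}} \to p^*L \to p^*L|_{\mathcal{Z}} \to 0$$
along $q$. Using K\"unneth together with the K3 vanishings $\mathrm{H}^1(X,L) = \mathrm{H}^2(X,L) = 0$ (so that $R^i q_* p^*L = \mathrm{H}^i(X,L) \otimes \mathcal{O}_{\PP}$ vanishes for $i > 0$), this yields a four-term exact sequence
$$
0 \to q_*(p^*L \otimes I_{\mathcal{Z}}) \to \mathrm{H}^0(X,L) \otimes \mathcal{O}_{\PP} \to q_*(p^*L|_{\mathcal{Z}}) \to R^1 q_*(p^*L \otimes I_{\mathcal{Z}}) \to 0.
$$
By definition $\mathcal{W}$ is the cokernel of the first arrow, and by exactness at the middle term this cokernel is isomorphic to the image of the middle arrow; equivalently, $\mathcal{W}$ is the kernel of the surjection $q_*(p^*L|_{\mathcal{Z}}) \twoheadrightarrow R^1 q_*(p^*L \otimes I_{\mathcal{Z}})$.

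Next, I would verify that the two outer terms of this kernel presentation are locally free. Since $\mathcal{Z} \to \PP$ is finite and flat (noted just above the lemma) and $p^*L|_{\mathcal{Z}}$ is a line bundle on $\mathcal{Z}$, the pushforward $q_*(p^*L|_{\mathcal{Z}})$ is locally free on $\PP$; its rank equals the degree $c_2(E) = k+1$ of $\mathcal{Z}\to\PP$.

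The main step is the identification of $R^1 q_*(p^*L \otimes I_{\mathcal{Z}})$, for which I would invoke the universal resolution
$$
0 \to \mathcal{O}_X \boxtimes \mathcal{O}_{\PP}(-2) \to E \boxtimes \mathcal{O}_{\PP}(-1) \to p^*L \otimes I_{\mathcal{Z}} \to 0
$$
introduced in this section. Applying $Rq_*$ and the projection formula, and invoking the vanishings $\mathrm{H}^i(X,E) = 0$ for $i > 0$ (recorded in the introduction), $\mathrm{H}^1(\mathcal{O}_X) = 0$, and $\mathrm{H}^2(\mathcal{O}_X) \cong \C$, the resulting long exact sequence collapses to give
$$
R^1 q_*(p^*L \otimes I_{\mathcal{Z}}) \;\cong\; R^2 q_*(\mathcal{O}_X \boxtimes \mathcal{O}_{\PP}(-2)) \;\cong\; \mathcal{O}_{\PP}(-2),
$$
which is a line bundle.

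With both terms in the kernel presentation locally free and the surjection established, $\mathcal{W}$ is locally free of rank $(k+1)-1 = k$. The main obstacle is the third step: everything else reduces to routine base-change and standard vanishing on the K3 surface, but the collapse of the long exact sequence in that step relies crucially on the specific cohomological properties of the Lazarsfeld--Mukai bundle $E$ together with the universal incidence resolution, which is exactly what the construction of this section is designed to supply.
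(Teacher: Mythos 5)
Your proposal is correct and is built on the same skeleton as the paper's proof: push forward $0 \to p^*L \otimes I_{\mathcal{Z}} \to p^*L \to p^*L|_{\mathcal{Z}} \to 0$ along $q$ and exhibit $\mathcal{W}$ as the kernel of the surjection $q_*(p^*L|_{\mathcal{Z}}) \twoheadrightarrow R^1q_*(p^*L \otimes I_{\mathcal{Z}})$. Where you genuinely diverge is in certifying that the two outer terms are locally free. The paper handles both at once with Grauert's theorem: the fibrewise dimensions $h^0(L|_{Z(s)}) = k+1$ and $h^1(X, L \otimes I_{Z(s)}) = h^2(\mathcal{O}_X) = 1$ are constant in $s$, so both direct images are locally free of the stated ranks and the claim follows. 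You instead invoke finite flatness of $\mathcal{Z} \to \PP$ for the first term, and for the second you run the universal resolution $0 \to \mathcal{O}_X \boxtimes \mathcal{O}_{\PP}(-2) \to E \boxtimes \mathcal{O}_{\PP}(-1) \to p^*L \otimes I_{\mathcal{Z}} \to 0$ through $Rq_*$, using $\mathrm{H}^1(\mathcal{O}_X)=\mathrm{H}^{>0}(E)=0$ to collapse the long exact sequence. Your computation is valid and yields the sharper conclusion $R^1q_*(p^*L \otimes I_{\mathcal{Z}}) \simeq \mathcal{O}_{\PP}(-2)$ rather than merely that this sheaf is a line bundle; indeed, it is exactly the device the paper deploys later, in the proof of Theorem \ref{geo-thm-intro}, to identify $R^1 q_*((L \boxtimes \mathcal{O}_{\PP}(k))\otimes I_{\mathcal{Z}}) \simeq \mathcal{O}_{\PP}(k-2)$. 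So nothing is lost and a little extra information is gained, at the cost of a slightly longer argument than the Grauert shortcut the lemma itself requires.
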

\begin{proof}
Applying $\mathrm{R}q_*$ gives the exact sequence $0 \to \mathcal{W} \to q_*(p^*L_{|_{\mathcal{Z}}}) \to \mathrm{R}^1q_*(p^*L \otimes I_{\mathcal{Z}})\to 0$. For any $s \in \mathrm{H}^0(E)$, we have $\mathrm{H}^1(X,L\otimes I_{Z(s)}) \simeq \mathrm{H}^2(\mathcal{O}_X)$. Thus $q_*(p^*L_{|_{\mathcal{Z}}}) $ and $\mathrm{R}^1q_*(p^*L \otimes I_{\mathcal{Z}})$ are locally free of ranks $k+1$ and $1$, by Grauert's Theorem \cite[III, \S 12]{hartshorne}. The claim follows.
\end{proof}
\smallskip

Since $D$ is a divisor, we have a rank $k$ vector bundle 
$\displaystyle{\Gamma:=\text{Ker}\left({q'}^*\mathcal{W} \twoheadrightarrow {p'}^*L_{|_D} \right).}$
As $L \otimes I_{Z(s)}$ is globally generated for all $s \in \mathrm{H}^0(E)$, we have a natural surjection ${q}^* q_* ({p}^*L \otimes I_{\mathcal{Z}}) \to {p}^*L \otimes I_{\mathcal{Z}}$. Applying $\pi^*$ and noting that $I_D$ is a quotient of $\pi^*I_{\mathcal{Z}}$, we have a surjection ${q'}^* q'_* ({p'}^*L \otimes I_D) \to {p'}^*L \otimes I_D$.
Let $\mathcal{S}$ be the vector bundle on $B$ defined by the exact sequence
$$0 \to \mathcal{S} \to {q'}^* q'_* ({p'}^*L \otimes I_D) \to {p'}^*L \otimes I_D \to 0.$$
We call $\mathcal{S}$ and $\Gamma$ the \emph{universal secant bundles}.
We have an exact sequence $$0 \to \mathcal{S} \to \pi^* \mathcal{M} \to \Gamma \to 0$$ which gives the exact sequence
$$ \ldots \to \bigwedge^{k-1} \pi^* \mathcal{M} \otimes \mathrm{Sym}^2 \mathcal{S} \to  \bigwedge^{k} \pi^* \mathcal{M} \otimes \mathcal{S} \to \bigwedge^{k+1}\pi^* \mathcal{M} \to 0.$$
To prove Voisin's Theorem it suffices to show $$\mathrm{H}^i(B, \bigwedge^{k+1-i} \pi^* \mathcal{M} \otimes \mathrm{Sym}^i \mathcal{S})=0 \; \; \text{for $1 \leq i \leq k+1$}.$$
One readily shows these vanishings for $i <k+1$ (see Theorem \ref{main-thm}). The crucial point is to show $\mathrm{H}^{k+1}(B, \mathrm{Sym}^{k+1} \mathcal{S})=0$. To ease the notation, we set $$\mathcal{G}:=q^*q_*(p^*L \otimes I_{\mathcal{Z}}).$$ Note also that $q_*p^*E \simeq \mathrm{H}^0(E) \otimes \mathcal{O}_{\PP}$.
\begin{lem} \label{first-iso}
We have $\mathrm{H}^{k+1}(X \times \PP,\mathrm{Sym}^{k+1} \mathcal{G})=0$ as well as a natural isomorphism
$$\mathrm{H}^k(X \times \PP,\mathrm{Sym}^{k+1} \mathcal{G})\simeq \mathrm{Sym}^k \mathrm{H}^0(E).$$
\end{lem}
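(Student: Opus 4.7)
The plan is to reduce the problem to a cohomology computation on $\PP$ alone. The key preliminary step is to apply $\mathrm{R}q_*$ to the defining sequence
$$0 \to \mathcal{O}_X \boxtimes \mathcal{O}_\PP(-2) \to E \boxtimes \mathcal{O}_\PP(-1) \to p^*L \otimes I_\mathcal{Z} \to 0,$$
and use $\mathrm{h}^i(E) = 0$ for $i > 0$ together with the K3 Hodge numbers $\mathrm{h}^i(\mathcal{O}_X) = 1,0,1$ to extract the tautological presentation
$$0 \to \mathcal{O}_\PP(-2) \to \mathrm{H}^0(E) \otimes \mathcal{O}_\PP(-1) \to \mathcal{F} \to 0,$$
where $\mathcal{F} := q_*(p^*L \otimes I_\mathcal{Z})$ is locally free of rank $k+1$; the vanishing $\mathrm{R}^1 q_*(p^*L \otimes I_\mathcal{Z}) = 0$ needed here is precisely what appears in Lemma \ref{very-first-lem}, and the first map in the display is the one induced by the tautological section $\mathrm{id} \in \mathrm{H}^0(E) \otimes \mathrm{H}^0(E)^\vee$. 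By definition $\mathcal{G} = q^*\mathcal{F}$, so $\mathrm{Sym}^{k+1}\mathcal{G} \simeq q^* \mathrm{Sym}^{k+1}\mathcal{F}$, and the K\"unneth formula yields
$$\mathrm{H}^n(X \times \PP, \mathrm{Sym}^{k+1}\mathcal{G}) \simeq \mathrm{H}^n(\PP, \mathrm{Sym}^{k+1}\mathcal{F}) \oplus \mathrm{H}^{n-2}(\PP, \mathrm{Sym}^{k+1}\mathcal{F}).$$

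It therefore suffices to show that $\mathrm{H}^j(\PP, \mathrm{Sym}^{k+1}\mathcal{F})$ vanishes for $j \neq k$ and is canonically $\mathrm{Sym}^k \mathrm{H}^0(E)$ for $j = k$. To this end I would apply the second symmetric power sequence from the preliminaries to the above presentation of $\mathcal{F}$; since $\bigwedge^2 \mathcal{O}_\PP(-2) = 0$, this collapses to the short exact sequence
$$0 \to \mathrm{Sym}^k \mathrm{H}^0(E) \otimes \mathcal{O}_\PP(-k-2) \to \mathrm{Sym}^{k+1}\mathrm{H}^0(E) \otimes \mathcal{O}_\PP(-k-1) \to \mathrm{Sym}^{k+1}\mathcal{F} \to 0.$$
On $\PP \simeq \PP^{k+1}$, the bundle $\mathcal{O}_\PP(-k-1)$ has vanishing cohomology in every degree (Bott's forbidden range), while $\mathcal{O}_\PP(-k-2)$ has one-dimensional cohomology only in top degree $k+1$. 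Hence the long exact sequence degenerates: every internal group vanishes and the connecting map gives the desired isomorphism
$$\mathrm{H}^k(\PP, \mathrm{Sym}^{k+1}\mathcal{F}) \xrightarrow{\ \sim\ } \mathrm{Sym}^k \mathrm{H}^0(E) \otimes \mathrm{H}^{k+1}(\PP, \mathcal{O}_\PP(-k-2)) \simeq \mathrm{Sym}^k \mathrm{H}^0(E),$$
while $\mathrm{H}^{k+1}(\PP, \mathrm{Sym}^{k+1}\mathcal{F}) = 0$ and, equally importantly for the K\"unneth identification, $\mathrm{H}^{k-1}(\PP, \mathrm{Sym}^{k+1}\mathcal{F}) = \mathrm{H}^{k-2}(\PP, \mathrm{Sym}^{k+1}\mathcal{F}) = 0$.

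There is no genuine obstacle here: after Step 1 identifies $\mathcal{F}$ via its tautological resolution, the remainder is a formal manipulation of line bundle cohomology on $\PP^{k+1}$. The one subtlety worth explicitly flagging is that the first map in the presentation of $\mathcal{F}$ must be recognised as the tautological one in order to ensure that the resulting isomorphism with $\mathrm{Sym}^k \mathrm{H}^0(E)$ is natural in $\mathrm{H}^0(E)$; this naturality will be crucial when the lemma is combined with later computations, but it follows directly from the construction of the defining sequence for $p^*L \otimes I_\mathcal{Z}$.
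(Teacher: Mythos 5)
Your argument is correct and is essentially the paper's own proof: both reduce to the short exact sequence $0 \to \mathrm{Sym}^k \mathrm{H}^0(E) \otimes \mathcal{O}_{\PP}(-k-2) \to \mathrm{Sym}^{k+1}\mathrm{H}^0(E) \otimes \mathcal{O}_{\PP}(-k-1) \to \mathrm{Sym}^{k+1}\mathcal{F} \to 0$ coming from the tautological presentation, and then use the vanishing of all cohomology of $\mathcal{O}_{\PP^{k+1}}(-k-1)$ together with $\mathrm{H}^1(\mathcal{O}_X)=0$ (the paper just keeps the factor $\mathcal{O}_X$ in place rather than splitting off the $X$-direction first via K\"unneth). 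One correction to a side remark: $\mathrm{R}^1q_*(p^*L \otimes I_{\mathcal{Z}})$ is \emph{not} zero --- Lemma \ref{very-first-lem} shows it is a line bundle --- and it is not needed here; the exactness of the pushed-forward presentation of $\mathcal{F}$ only requires $\mathrm{R}^1q_*(\mathcal{O}_X \boxtimes \mathcal{O}_{\PP}(-2)) \simeq \mathrm{H}^1(\mathcal{O}_X)\otimes\mathcal{O}_{\PP}(-2) = 0$.
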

\begin{proof}
The sequence $0 \to q^*\mathcal{O}_{\PP}(-2) \to q^*q_*p^*E \otimes q^*\mathcal{O}_{\PP}(-1) \to \mathcal{G}\to 0$
 gives the exact sequence
$$0 \to \mathrm{Sym}^k \mathrm{H}^0(E) \otimes q^*\mathcal{O}_{\PP}(-k-2) \to \mathrm{Sym}^{k+1}\mathrm{H}^0(E) \otimes q^*\mathcal{O}_{\PP}(-k-1) \to \mathrm{Sym}^{k+1}\mathcal{G} \to 0,$$
since $q_*p^*E \simeq \mathrm{H}^0(E) \otimes \mathcal{O}_{\PP}$. Thus
\begin{align*}
\mathrm{H}^k(\mathrm{Sym}^{k+1}\mathcal{G}) \simeq \mathrm{H}^{k+1}(\mathrm{Sym}^k \mathrm{H}^0(E) \otimes q^*\mathcal{O}_{\PP}(-k-2))\simeq \mathrm{Sym}^k \mathrm{H}^0(E)
\end{align*}
The vanishing $\mathrm{H}^{k+1}(\mathrm{Sym}^{k+1} \mathcal{G})=0$ follows from $$\mathrm{H}^{k+1}(\mathcal{O}_X \boxtimes \mathcal{O}_{\PP}(-k-1))=\mathrm{H}^{k+2}(\mathcal{O}_X \boxtimes \mathcal{O}_{\PP}(-k-2))=0,$$
using $\mathrm{H}^1(\mathcal{O}_X)=0$.
\end{proof}

The next lemma is a similar computation to the previous one.
\begin{lem} \label{first-twist-lem}
We have a natural isomorphism
$\mathrm{H}^k(\mathrm{Sym}^{k} \mathcal{G} \otimes p^*L \otimes I_{\mathcal{Z}}) \simeq \mathrm{Sym}^k \mathrm{H}^0(E).$

\end{lem}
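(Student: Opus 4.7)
The approach closely parallels the proof of Lemma \ref{first-iso}. Applying $\mathrm{Sym}^k$ to the sequence $0 \to q^*\mathcal{O}_\PP(-2) \to q^*q_*p^*E \otimes q^*\mathcal{O}_\PP(-1) \to \mathcal{G} \to 0$ of that lemma, and using that $q^*\mathcal{O}_\PP(-2)$ is a line bundle (so the symmetric Eagon--Northcott resolution truncates), I obtain the short exact sequence
$$0 \to \mathrm{Sym}^{k-1}\mathrm{H}^0(E) \otimes q^*\mathcal{O}_\PP(-k-1) \to \mathrm{Sym}^k\mathrm{H}^0(E) \otimes q^*\mathcal{O}_\PP(-k) \to \mathrm{Sym}^k \mathcal{G} \to 0.$$
All three terms are locally free, so tensoring with the coherent sheaf $p^*L \otimes I_{\mathcal{Z}}$ preserves exactness, and the resulting long exact cohomology sequence reduces the lemma to computing $\mathrm{H}^k$ and $\mathrm{H}^{k+1}$ of $q^*\mathcal{O}_\PP(-m) \otimes p^*L \otimes I_{\mathcal{Z}}$ for $m=k$ and $m=k+1$.

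For each such $m$, I would twist the defining sequence
$$0 \to \mathcal{O}_X \boxtimes \mathcal{O}_\PP(-2) \to E \boxtimes \mathcal{O}_\PP(-1) \to p^*L \otimes I_{\mathcal{Z}} \to 0$$
by $q^*\mathcal{O}_\PP(-m)$ and apply the K\"unneth formula to the two external products on the left and in the middle. Using the K3 vanishings $\mathrm{H}^1(\mathcal{O}_X)=0$ and $\mathrm{H}^1(E)=\mathrm{H}^2(E)=0$ together with the cohomology of $\mathcal{O}(a)$ on $\PP^{k+1}$, the case $m=k$ drops out immediately: the single nontrivial contribution is $\mathrm{H}^{k+1}(\mathcal{O}_\PP(-k-2)) \simeq \C$, yielding $\mathrm{H}^k(q^*\mathcal{O}_\PP(-k) \otimes p^*L \otimes I_{\mathcal{Z}}) \simeq \C$ and $\mathrm{H}^{k+1}(q^*\mathcal{O}_\PP(-k) \otimes p^*L \otimes I_{\mathcal{Z}}) = 0$.

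The main (and really only) obstacle arises for $m=k+1$: both $\mathrm{H}^{k+1}(\mathcal{O}_X \boxtimes \mathcal{O}_\PP(-k-3))$ and $\mathrm{H}^{k+1}(E \boxtimes \mathcal{O}_\PP(-k-2))$ now equal $\mathrm{H}^0(E)$, and the long exact sequence collapses to
$$0 \to \mathrm{H}^k(q^*\mathcal{O}_\PP(-k-1) \otimes p^*L \otimes I_{\mathcal{Z}}) \to \mathrm{H}^0(E) \xrightarrow{\alpha} \mathrm{H}^0(E) \to \mathrm{H}^{k+1}(q^*\mathcal{O}_\PP(-k-1) \otimes p^*L \otimes I_{\mathcal{Z}}) \to 0.$$
The hard part is to verify that $\alpha$ is an isomorphism. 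Since the map on the original sequence is induced by the universal section $\mathrm{id} \in \mathrm{H}^0(E \boxtimes \mathcal{O}_\PP(1)) \simeq \mathrm{Hom}(\mathrm{H}^0(E),\mathrm{H}^0(E))$, the K\"unneth identifications rewrite $\alpha$ as the cup product $\mathrm{H}^{k+1}(\mathcal{O}_\PP(-k-3)) \otimes \mathrm{H}^0(\mathcal{O}_\PP(1)) \to \mathrm{H}^{k+1}(\mathcal{O}_\PP(-k-2))$ contracted against $\mathrm{id} \in \mathrm{H}^0(E) \otimes \mathrm{H}^0(E)^\vee$. Because $\mathcal{O}_\PP(-k-3) \otimes \mathcal{O}_\PP(1) = \omega_\PP$, this cup product is precisely the Serre duality pairing, and contraction against the identity gives the identity map on $\mathrm{H}^0(E)$. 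Feeding these vanishings back into the long exact sequence of the $\mathrm{Sym}^k$-resolution now yields $\mathrm{H}^k(\mathrm{Sym}^k \mathcal{G} \otimes p^*L \otimes I_{\mathcal{Z}}) \simeq \mathrm{Sym}^k \mathrm{H}^0(E)$.
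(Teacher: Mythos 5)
Your proposal is correct and follows essentially the same route as the paper: the same $\mathrm{Sym}^k$ resolution of $\mathcal{G}$ tensored with $p^*L\otimes I_{\mathcal{Z}}$, the same Koszul-type sequence for $p^*L\otimes I_{\mathcal{Z}}$, the same K\"unneth computations, and the same identification of the key map $\mathrm{H}^{k+1}(\mathcal{O}_X\boxtimes\mathcal{O}_{\PP}(-k-3))\to\mathrm{H}^{k+1}(E\boxtimes\mathcal{O}_{\PP}(-k-2))$ with the identity on $\mathrm{H}^0(E)$. The only difference is that you spell out via Serre duality why that map is the identity, a point the paper asserts without elaboration.
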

\begin{proof}
We have the short exact sequence
\begin{align*}
0 &\to \mathrm{Sym}^{k-1} \mathrm{H}^0(E) \otimes q^*\mathcal{O}_{\PP}(-k-1) \otimes p^*L \otimes I_{\mathcal{Z}} \to \mathrm{Sym}^{k}\mathrm{H}^0(E) \otimes q^*\mathcal{O}_{\PP}(-k) \otimes p^*L \otimes I_{\mathcal{Z}} \\
&\to \mathrm{Sym}^{k}\mathcal{G} \otimes p^*L \otimes I_{\mathcal{Z}}\to 0,
\end{align*}
as well as the exact sequence
$0 \to \mathcal{O}_X \boxtimes \mathcal{O}_{\PP}(-2) \to E \boxtimes \mathcal{O}_{\PP}(-1) \to p^*L \otimes I_{\mathcal{Z}} \to 0.$\\


By the K\"unneth formula, $\mathrm{H}^{k+2}(\mathcal{O}_X \boxtimes \mathcal{O}_{\PP}(-k-3))=0$. We have
$$\mathrm{H}^{k+1}(\mathcal{O}_X \boxtimes \mathcal{O}_{\PP}(-k-3))=\mathrm{H}^{k+1}(\mathrm{K}_{\PP}(-1)) \simeq \mathrm{H}^0(\mathcal{O}_{\PP}(1))^{\vee}\simeq \mathrm{H}^0(E).$$
Further, $\mathrm{H}^{k+1}(E \boxtimes \mathcal{O}_{\PP}(-k-2)) \simeq \mathrm{H}^{k+1}(E \boxtimes \mathrm{K}_{\PP}) \simeq \mathrm{H}^0(E).$
The map $$\mathrm{H}^{k+1}(\mathcal{O}_X \boxtimes \mathcal{O}_{\PP}(-k-3)) \to \mathrm{H}^{k+1}(E \boxtimes \mathcal{O}_{\PP}(-k-2)) $$ is identified with $\mathrm{id}: \mathrm{H}^0(E) \to \mathrm{H}^0(E)$. Thus $\mathrm{H}^{k+1}(L \boxtimes \mathcal{O}_{\PP}(-k-1)\otimes I_{\mathcal{Z}})=0.$ We likewise have $\mathrm{H}^{k}(L \boxtimes \mathcal{O}_{\PP}(-k-1)\otimes I_{\mathcal{Z}})=0.$ 
Thus,
\begin{align*}
\mathrm{H}^k(\mathrm{Sym}^{k} \mathcal{G} \otimes p^*L \otimes I_{\mathcal{Z}}) 
\simeq \mathrm{Sym}^k\mathrm{H}^0(E)\otimes \mathrm{H}^k(L \boxtimes \mathcal{O}_{\PP}(-k)\otimes I_{\mathcal{Z}}).
\end{align*}
To finish the proof, it suffices to show that the boundary map
$$\mathrm{H}^k(L \boxtimes \mathcal{O}_{\PP}(-k)\otimes I_{\mathcal{Z}}) \to \mathrm{H}^{k+1}(q^*\mathrm{K}_{\PP})$$
is an isomorphism, which follows from the fact that $\mathrm{H}^i(E \boxtimes \mathcal{O}(-k-1))=0$ for all $i$.
\end{proof}

We now repeat the previous lemma, twisting instead by $\mathcal{G}:=q^*q_*(p^*L \otimes I_{\mathcal{Z}})$.
\begin{lem} \label{second-twist-lem}
The evaluation morphism $\mathcal{G} \twoheadrightarrow p^*L \otimes I_{\mathcal{Z}}$ induces an isomorphism
$$\mathrm{H}^k(\mathrm{Sym}^{k} \mathcal{G} \otimes \mathcal{G}) \xrightarrow{\sim} \mathrm{H}^k(\mathrm{Sym}^{k} \mathcal{G} \otimes p^*L \otimes I_{\mathcal{Z}}) .$$
\end{lem}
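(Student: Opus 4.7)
The plan is to identify the kernel of the evaluation $\mathcal{G} \twoheadrightarrow p^*L \otimes I_\mathcal{Z}$ as a box product, then reduce the isomorphism to showing that $\mathrm{H}^k$ and $\mathrm{H}^{k+1}$ of $\mathrm{Sym}^k\mathcal{G}$ tensored with this kernel vanish, and finally handle these by K\"unneth together with a standard line-bundle computation on $\PP^{k+1}$.

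To identify the kernel, I would compare the fundamental sequence
$$0 \to \mathcal{O}_X \boxtimes \mathcal{O}_{\PP}(-2) \to E \boxtimes \mathcal{O}_{\PP}(-1) \to p^*L \otimes I_{\mathcal{Z}} \to 0$$
with its image under $q^*q_*$, which produces the sequence $0 \to q^*\mathcal{O}_\PP(-2) \to \mathrm{H}^0(E) \otimes q^*\mathcal{O}_\PP(-1) \to \mathcal{G} \to 0$ used already in Lemma \ref{first-iso}. By naturality of the counit $q^*q_* \to \mathrm{id}$, these two sequences fit into a map of short exact sequences in which the first vertical arrow is the identity and the middle is the evaluation $\mathrm{H}^0(E) \otimes \mathcal{O}_X \twoheadrightarrow E$ (tensored with $q^*\mathcal{O}_\PP(-1)$), whose kernel is $M_E \boxtimes \mathcal{O}_\PP(-1)$, with $M_E := \ker(\mathrm{H}^0(E) \otimes \mathcal{O}_X \to E)$. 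The snake lemma then yields the short exact sequence
$$0 \to M_E \boxtimes \mathcal{O}_\PP(-1) \to \mathcal{G} \to p^*L \otimes I_\mathcal{Z} \to 0.$$

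Tensoring this with the locally free sheaf $\mathrm{Sym}^k\mathcal{G}$ and taking cohomology, it suffices to prove $\mathrm{H}^i(X \times \PP, \mathrm{Sym}^k\mathcal{G} \otimes M_E \boxtimes \mathcal{O}_\PP(-1)) = 0$ for $i = k, k+1$. Since $\mathcal{G} = q^*\tilde{\mathcal{G}}$ with $\tilde{\mathcal{G}} := q_*(p^*L \otimes I_\mathcal{Z})$, this sheaf is a genuine box product $M_E \boxtimes (\mathrm{Sym}^k\tilde{\mathcal{G}} \otimes \mathcal{O}_\PP(-1))$. The defining sequence of $M_E$ combined with $\mathrm{H}^i(E) = 0$ for $i > 0$ and $\mathrm{H}^1(\mathcal{O}_X) = 0$ forces $\mathrm{H}^*(X, M_E)$ to be concentrated in degree $2$, so K\"unneth reduces the question to the vanishings $\mathrm{H}^{k-2}(\PP, \mathrm{Sym}^k\tilde{\mathcal{G}}(-1)) = \mathrm{H}^{k-1}(\PP, \mathrm{Sym}^k\tilde{\mathcal{G}}(-1)) = 0$.

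For this last step, I would apply $\mathrm{Sym}^k$ to $0 \to \mathcal{O}_\PP(-2) \to \mathrm{H}^0(E) \otimes \mathcal{O}_\PP(-1) \to \tilde{\mathcal{G}} \to 0$ (a two-term filtration since $\mathcal{O}_\PP(-2)$ is a line bundle) and twist by $\mathcal{O}_\PP(-1)$, obtaining
{\small{$$0 \to \mathrm{Sym}^{k-1}\mathrm{H}^0(E) \otimes \mathcal{O}_\PP(-k-2) \to \mathrm{Sym}^k\mathrm{H}^0(E) \otimes \mathcal{O}_\PP(-k-1) \to \mathrm{Sym}^k\tilde{\mathcal{G}}(-1) \to 0.$$}}
On $\PP = \PP^{k+1}$, the line bundle $\mathcal{O}_\PP(-k-1)$ is acyclic while $\mathcal{O}_\PP(-k-2)$ has cohomology only in degree $k+1$; the associated long exact sequence instantly yields $\mathrm{H}^i(\mathrm{Sym}^k\tilde{\mathcal{G}}(-1)) = 0$ for $i \neq k$, in particular the required vanishings. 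The one potentially delicate step is the initial kernel identification: one must verify that the counit-induced map of short exact sequences really has the evaluation $\mathrm{H}^0(E) \otimes \mathcal{O}_X \twoheadrightarrow E$ as its middle arrow. Once this is confirmed, everything else is formal K\"unneth and projective-space cohomology.
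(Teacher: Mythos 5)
Your proof is correct, but it is organized differently from the paper's. The paper never identifies the kernel of the evaluation map; instead it resolves the $\mathrm{Sym}^k\mathcal{G}$ factor by $0 \to \mathrm{Sym}^{k-1}\mathrm{H}^0(E)\otimes q^*\mathcal{O}_{\PP}(-k-1)\otimes\mathcal{G} \to \mathrm{Sym}^k\mathrm{H}^0(E)\otimes q^*\mathcal{O}_{\PP}(-k)\otimes\mathcal{G} \to \mathrm{Sym}^k\mathcal{G}\otimes\mathcal{G}\to 0$, computes $\mathrm{H}^k(\mathrm{Sym}^k\mathcal{G}\otimes\mathcal{G})\simeq\mathrm{Sym}^k\mathrm{H}^0(E)$ via the identification $\mathrm{H}^k(\mathcal{G}\otimes q^*\mathcal{O}_{\PP}(-k))\xrightarrow{\sim}\mathrm{H}^{k+1}(q^*\mathrm{K}_{\PP})$, and then deduces the isomorphism by matching this against the entirely parallel computation of $\mathrm{H}^k(\mathrm{Sym}^k\mathcal{G}\otimes p^*L\otimes I_{\mathcal{Z}})$ in Lemma \ref{first-twist-lem}, the two being intertwined by the evaluation map. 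You instead resolve the other factor: the snake-lemma identification $\mathrm{Ker}(\mathcal{G}\twoheadrightarrow p^*L\otimes I_{\mathcal{Z}})\simeq M_E\boxtimes\mathcal{O}_{\PP}(-1)$ (which is valid --- the counit on $\mathcal{O}_X\boxtimes\mathcal{O}_{\PP}(-2)$ is an isomorphism and on $E\boxtimes\mathcal{O}_{\PP}(-1)$ is the evaluation of $E$, with kernel $M_E\boxtimes\mathcal{O}_{\PP}(-1)$, since all sheaves involved are locally free and the pushed-forward sequence is exact by $\mathrm{H}^1(\mathcal{O}_X)=0$), followed by K\"unneth with $\mathrm{H}^*(X,M_E)$ concentrated in degree $2$ and the concentration of $\mathrm{H}^*(\PP,\mathrm{Sym}^k\tilde{\mathcal{G}}(-1))$ in degree $k$. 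What your route buys is that the isomorphism is manifestly induced by the evaluation map, with no need to trace naturality through two separate boundary-map computations; what the paper's route buys is that the computation of $\mathrm{H}^k(\mathrm{Sym}^k\mathcal{G}\otimes\mathcal{G})\simeq\mathrm{Sym}^k\mathrm{H}^0(E)$ is reused verbatim in the surrounding lemmas. Both reduce to the same line-bundle cohomology on $\PP^{k+1}$, and your final step is carried out correctly.
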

\begin{proof}
We have the short exact sequences
\begin{align*}
0 \to \mathrm{Sym}^{k-1} \mathrm{H}^0(E) \otimes q^*\mathcal{O}_{\PP}(-k-1) \otimes \mathcal{G} \to \mathrm{Sym}^{k}\mathrm{H}^0(E) \otimes q^*\mathcal{O}_{\PP}(-k) \otimes \mathcal{G} \to \mathrm{Sym}^{k}\mathcal{G} \otimes \mathcal{G} \to 0,
\end{align*}
and $0 \to q^*\mathcal{O}_{\PP}(-2) \to \mathrm{H}^0(E) \otimes q^*\mathcal{O}_{\PP}(-1) \to \mathcal{G} \to 0.$\smallskip

Using the second sequence, $\mathrm{H}^{k+1}(\mathcal{G} \otimes q^*\mathcal{O}_{\PP}(-k-1))=\mathrm{H}^{k}(\mathcal{G} \otimes q^*\mathcal{O}_{\PP}(-k-1))=0$ and $\mathrm{H}^k(\mathcal{G} \otimes q^*\mathcal{O}_{\PP}(-k))\xrightarrow{\sim} \mathrm{H}^{k+1}(q^*\mathrm{K}_{\PP})$. Thus
\begin{align*}
\mathrm{H}^k(\mathrm{Sym}^{k} \mathcal{G} \otimes \mathcal{G}) \simeq \mathrm{H}^k(\mathrm{Sym}^k \mathrm{H}^0(E) \otimes q^*\mathcal{O}_{\PP}(-k)\otimes \mathcal{G}) \simeq \mathrm{Sym}^k\mathrm{H}^0(E)
\end{align*}
and the evaluation map gives an isomorphism $\mathrm{H}^k(\mathrm{Sym}^{k}\mathcal{G} \otimes \mathcal{G}) \xrightarrow{\sim} \mathrm{H}^k(\mathrm{Sym}^{k} \mathcal{G} \otimes p^*L \otimes I_{\mathcal{Z}}).$
\end{proof}
As a corollary, we now deduce:
\begin{prop} \label{auto-injectivity}
We have a natural isomorphism $\mathrm{H}^k(\mathrm{Sym}^{k+1} \mathcal{G}) \xrightarrow{\sim} \mathrm{H}^k(\mathrm{Sym}^{k} \mathcal{G} \otimes p^*L \otimes I_{\mathcal{Z}}).$
\end{prop}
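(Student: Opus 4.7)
The plan is to deduce the proposition from the three preceding lemmas by means of the natural multiplication morphism
\[
\mu \colon \mathrm{Sym}^{k}\mathcal{G} \otimes \mathcal{G} \to \mathrm{Sym}^{k+1}\mathcal{G}.
\]
Since $\mathcal{G} = q^{*}q_{*}(p^{*}L \otimes I_{\mathcal{Z}})$ is locally free on $X \times \PP$ (of rank $k+1$) and we are in characteristic zero, the canonical comultiplication (symmetrization) morphism furnishes a natural splitting of $\mu$; equivalently, one has the Schur decomposition $\mathrm{Sym}^{k}\mathcal{G} \otimes \mathcal{G} \simeq \mathrm{Sym}^{k+1}\mathcal{G} \oplus \mathbb{S}_{(k,1)}\mathcal{G}$. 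In particular, $\mathrm{Sym}^{k+1}\mathcal{G}$ is a direct summand of $\mathrm{Sym}^{k}\mathcal{G} \otimes \mathcal{G}$, and so the induced map
\[
\mu_{*} \colon \mathrm{H}^{k}(\mathrm{Sym}^{k}\mathcal{G} \otimes \mathcal{G}) \twoheadrightarrow \mathrm{H}^{k}(\mathrm{Sym}^{k+1}\mathcal{G})
\]
is surjective.

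Next I would compare dimensions using the previous lemmas. Lemma \ref{first-iso} gives $\mathrm{H}^{k}(\mathrm{Sym}^{k+1}\mathcal{G}) \simeq \mathrm{Sym}^{k}\mathrm{H}^{0}(E)$, while combining Lemmas \ref{first-twist-lem} and \ref{second-twist-lem} produces $\mathrm{H}^{k}(\mathrm{Sym}^{k}\mathcal{G} \otimes \mathcal{G}) \simeq \mathrm{Sym}^{k}\mathrm{H}^{0}(E)$ via the evaluation morphism $\mathcal{G} \twoheadrightarrow p^{*}L \otimes I_{\mathcal{Z}}$. Since these finite-dimensional vector spaces have the same dimension, the surjection $\mu_{*}$ is forced to be an isomorphism.

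The desired natural isomorphism is then the composition
\[
\mathrm{H}^{k}(\mathrm{Sym}^{k+1}\mathcal{G}) \xrightarrow{\mu_{*}^{-1}} \mathrm{H}^{k}(\mathrm{Sym}^{k}\mathcal{G} \otimes \mathcal{G}) \xrightarrow{\sim} \mathrm{H}^{k}(\mathrm{Sym}^{k}\mathcal{G} \otimes p^{*}L \otimes I_{\mathcal{Z}}),
\]
where the second arrow is supplied by Lemma \ref{second-twist-lem}. Equivalently, at the level of sheaves this is the map induced by the comultiplication $\mathrm{Sym}^{k+1}\mathcal{G} \to \mathrm{Sym}^{k}\mathcal{G} \otimes \mathcal{G}$ followed by $\mathrm{id} \otimes \mathrm{ev}$ on the final tensor factor, and is therefore manifestly natural.

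I do not anticipate any serious obstacle at this stage. Given the three lemmas already in hand, the proposition reduces to the purely formal observation that a surjection between two finite-dimensional vector spaces of equal dimension must be an isomorphism; this is the precise sense in which the nonsingularity alluded to as ``automatic'' in the introduction manifests itself here.
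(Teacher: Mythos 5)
Your proposal is correct and takes essentially the same route as the paper: both arguments rest on the fact that the composite $\mathrm{Sym}^{k+1}\mathcal{G}\to\mathrm{Sym}^{k}\mathcal{G}\otimes\mathcal{G}\to\mathrm{Sym}^{k+1}\mathcal{G}$ is multiplication by $k+1$, so that $\mathrm{Sym}^{k+1}\mathcal{G}$ splits off as a direct summand, and both then conclude by the dimension count supplied by Lemmas \ref{first-iso}, \ref{first-twist-lem} and \ref{second-twist-lem}. The paper phrases this as injectivity of the comultiplication on $\mathrm{H}^{k}$ while you phrase it as surjectivity of the multiplication, but the content is identical, and your identification of $\mathrm{ev}_{*}\circ\mu_{*}^{-1}$ with the natural map is accurate up to the harmless nonzero scalar $k+1$.
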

\begin{proof}
By the previous lemmas, it suffices to show that the natural morphism
$$\mathrm{H}^k(\mathrm{Sym}^{k+1} \mathcal{G}) \to \mathrm{H}^k(\mathrm{Sym}^{k} \mathcal{G} \otimes \mathcal{G})$$
is injective. For a vector bundle $\mathcal{F}$ the composition $\mathrm{Sym}^i \mathcal{F} \to \mathrm{Sym}^{i-1}\mathcal{F} \otimes \mathcal{F} \to \mathrm{Sym}^i \mathcal{F}$
of natural maps is just multiplication by $i$. This completes the proof.
\end{proof}
We now complete the proof that $\mathrm{K}_{k,1}(X,L)=0$.
\begin{thm} \label{main-thm}
We have $\mathrm{H}^i(B, \bigwedge^{k+1-i} \pi^* \mathcal{M} \otimes \mathrm{Sym}^i \mathcal{S})=0$ for $1 \leq i \leq k+1$.
\end{thm}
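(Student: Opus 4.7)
The plan is to reduce the vanishings to cohomological computations on $X \times \PP$. I first observe that by the projection formula $\pi^* \mathcal{G} = \pi^* q^* q_*(p^*L \otimes I_{\mathcal{Z}}) = q'^* q'_*(p'^*L \otimes I_D)$, so the defining sequence of $\mathcal{S}$ rewrites on $B$ as
$$0 \to \mathcal{S} \to \pi^*\mathcal{G} \to p'^*L \otimes I_D \to 0.$$
Since $p'^*L \otimes I_D$ is a line bundle, its $i$-th symmetric power yields
$$0 \to \mathrm{Sym}^i \mathcal{S} \to \mathrm{Sym}^i \pi^*\mathcal{G} \to \mathrm{Sym}^{i-1} \pi^*\mathcal{G} \otimes p'^*L \otimes I_D \to 0.$$
Tensor with the locally free sheaf $\bigwedge^{k+1-i} \pi^*\mathcal{M}$ and consider the resulting long exact sequence in cohomology. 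The projection formula (using $R^j \pi_* I_D = 0$ for $j > 0$) identifies the cohomology of the middle and right-hand terms with that of $\bigwedge^{k+1-i} \mathcal{M} \otimes \mathrm{Sym}^i \mathcal{G}$ and $\bigwedge^{k+1-i} \mathcal{M} \otimes \mathrm{Sym}^{i-1} \mathcal{G} \otimes p^*L \otimes I_\mathcal{Z}$ on $X \times \PP$.

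For $1 \leq i \leq k$, I will show the needed cohomologies vanish in every relevant degree via the K\"unneth formula. Resolving $\mathrm{Sym}^j \mathcal{G}$ by the two-term complex obtained from $0 \to q^*\mathcal{O}_\PP(-2) \to \mathrm{H}^0(E) \otimes q^*\mathcal{O}_\PP(-1) \to \mathcal{G} \to 0$ together with the symmetric-power sequences of the preliminaries, and resolving $p^*L \otimes I_\mathcal{Z}$ via $0 \to \mathcal{O}_X \boxtimes \mathcal{O}_\PP(-2) \to E \boxtimes \mathcal{O}_\PP(-1) \to p^*L \otimes I_\mathcal{Z} \to 0$, every term becomes a sum of factors of the form $\mathrm{H}^*(X, \bigwedge^{k+1-i} M_L \otimes F) \otimes \mathrm{H}^*(\PP, \mathcal{O}_\PP(-a))$ with $F \in \{\mathcal{O}_X, E\}$ and $a$ between $i$ and $i+2$. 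The $\PP$-factor vanishes for $1 \leq a \leq k+1$, handling nearly every case. The one extremal twist $a = k+2$, which appears only when $i = k$, is killed by the vanishing $\mathrm{H}^0(X, M_L) = 0$, immediate from the definition of $M_L$.

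For the critical case $i = k+1$, the long exact sequence reduces to
$$\mathrm{H}^k(\mathrm{Sym}^{k+1} \pi^*\mathcal{G}) \to \mathrm{H}^k(\mathrm{Sym}^k \pi^*\mathcal{G} \otimes p'^*L \otimes I_D) \to \mathrm{H}^{k+1}(\mathrm{Sym}^{k+1}\mathcal{S}) \to \mathrm{H}^{k+1}(\mathrm{Sym}^{k+1}\pi^*\mathcal{G}).$$
The projection formula identifies the outer terms with $\mathrm{H}^{k+1}(X \times \PP, \mathrm{Sym}^{k+1}\mathcal{G})$ and $\mathrm{H}^k(X \times \PP, \mathrm{Sym}^k \mathcal{G} \otimes p^*L \otimes I_\mathcal{Z})$. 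Lemma \ref{first-iso} makes the former vanish and identifies $\mathrm{H}^k(\mathrm{Sym}^{k+1}\mathcal{G})$ with $\mathrm{Sym}^k \mathrm{H}^0(E)$, while Lemma \ref{first-twist-lem} identifies the latter with $\mathrm{Sym}^k \mathrm{H}^0(E)$ as well. The map on the left is precisely the natural map proven to be an isomorphism in Proposition \ref{auto-injectivity}, which forces $\mathrm{H}^{k+1}(B, \mathrm{Sym}^{k+1}\mathcal{S}) = 0$. The hard part of the theorem is clearly this last case: the range $i \leq k$ is a formal K\"unneth exercise, whereas $i = k+1$ rests entirely on the ``automatic'' isomorphism of Proposition \ref{auto-injectivity} that the secant-sheaf setup was designed to produce.
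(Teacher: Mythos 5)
Your proof is correct and follows essentially the same route as the paper's: the same symmetric-power sequence for $\mathcal{S}$, the same projection-formula descent to $X \times \PP$, the same K\"unneth vanishings for $1 \leq i \leq k$ (including the $\mathrm{H}^0(X,M_L)=0$ input at the extremal twist $a=k+2$ when $i=k$), and the same use of Lemma \ref{first-iso}, Lemma \ref{first-twist-lem} and Proposition \ref{auto-injectivity} to handle $i=k+1$. There is nothing to add.
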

\begin{proof}
Observe $\pi^*\mathcal{G}\simeq {q'}^*q'_*({p'}^*L \otimes I_{D})$. From the defining sequence for $\mathcal{S}$, we have an exact sequence
\begin{align} \label{ses-S}
0 \to \mathrm{Sym}^{i}\mathcal{S} \to \mathrm{Sym}^{i} \pi^* \mathcal{G} \to \mathrm{Sym}^{i-1} \pi^*\mathcal{G} \otimes {p'}^*L \otimes I_{D} \to 0.
\end{align}
Using the projection formula, and recalling the identities $\pi_*\mathcal{O}_B \simeq \mathcal{O}_{X \times \PP}$, $\pi_* I_{D} \simeq I_{\mathcal{Z}}$ and $\mathrm{R}^j \pi_*\mathcal{O}_B=\mathrm{R}^j\pi_* I_D=0$ for $j>0$,  we may identify
$$\mathrm{H}^{\ell}(B, \mathrm{Sym}^{i} \pi^* \mathcal{G}) \to \mathrm{H}^{\ell}(B,\mathrm{Sym}^{i-1} \pi^*\mathcal{G} \otimes {p'}^*L \otimes I_{D})$$
with the natural map $\mathrm{H}^{\ell}(X \times \PP, \mathrm{Sym}^{i} \mathcal{G}) \to \mathrm{H}^{\ell}(X \times \PP,\mathrm{Sym}^{i-1} \mathcal{G} \otimes {p}^*L \otimes I_{\mathcal{Z}})$, for any $\ell$. Taking the long exact sequence of cohomology for the sequence (\ref{ses-S}) for $i=k+1$ and applying the previous lemmas, we immediately see $\displaystyle \mathrm{H}^{k+1}(B,\mathrm{Sym}^{k+1} \mathcal{S})=0.$ Namely, $\mathrm{H}^{k+1}(\mathrm{Sym}^{k+1}\pi^* \mathcal{G})=0$ from Lemma \ref{first-iso} and the map $\mathrm{H}^k(\mathrm{Sym}^{k+1} \pi^* \mathcal{G}) \to \mathrm{H}^k(\mathrm{Sym}^{k}\pi^* \mathcal{G}  \otimes {p'}^*L \otimes I_{D} )$ is an isomorphism by Proposition \ref{auto-injectivity}.
\smallskip

To complete the proof, it suffices to show
\begin{align*}
\mathrm{H}^i (\bigwedge^{k+1-i} \pi^* \mathcal{M} \otimes \mathrm{Sym}^{i} \pi^* \mathcal{G})=\mathrm{H}^{i-1}(\bigwedge^{k+1-i} \pi^* \mathcal{M} \otimes \mathrm{Sym}^{i-1} \pi^* \mathcal{G} \otimes {p'}^*L \otimes I_{D} )=0, \; \; \text{for $1 \leq i \leq k$}.
\end{align*}
The first vanishing follows from the exact sequence 
$$0 \to \mathrm{Sym}^{i-1} \mathrm{H}^0(E) \otimes q^*\mathcal{O}_{\PP}(-i-1) \to \mathrm{Sym}^{i}\mathrm{H}^0(E) \otimes q^*\mathcal{O}_{\PP}(-i) \to \mathrm{Sym}^{i}\mathcal{G}\to 0,$$
together with $\mathrm{H}^i(\bigwedge^{k+1-i} M_L \boxtimes \mathcal{O}_{\PP}(-i))=\mathrm{H}^{i+1}(\bigwedge^{k+1-i}M_L \boxtimes \mathcal{O}_{\PP}(-i-1))=0$ for $1 \leq i \leq k$.\smallskip

Next, from the exact sequence
\begin{align*}
0 &\to \mathrm{Sym}^{i-2} \mathrm{H}^0(E) \otimes q^*\mathcal{O}_{\PP}(-i) \otimes p^*L \otimes I_{\mathcal{Z}} \to \mathrm{Sym}^{i-1}\mathrm{H}^0(E) \otimes q^*\mathcal{O}_{\PP}(-i+1) \otimes p^*L \otimes I_{\mathcal{Z}}\\ &\to \mathrm{Sym}^{i-1}\mathcal{G} \otimes p^*L \otimes I_{\mathcal{Z}}\to 0,
\end{align*}
it suffices to show $\mathrm{H}^{i-1}(\bigwedge^{k+1-i} M_L (L)\boxtimes \mathcal{O}_{\PP}(-i+1)\otimes I_{\mathcal{Z}})=\mathrm{H}^{i}(\bigwedge^{k+1-i} M_L(L) \boxtimes \mathcal{O}_{\PP}(-i) \otimes I_{\mathcal{Z}})=0$ for $1 \leq i \leq k.$ This follows from $0 \to \mathcal{O}_X \boxtimes \mathcal{O}_{\PP}(-2) \to E \boxtimes \mathcal{O}_{\PP}(-1) \to L \otimes I_{\mathcal{Z}} \to 0,$ after twisting with $\bigwedge^{k+1-i} M_L \boxtimes \mathcal{O}_{\PP}(q)$ for $q \in \{-i+1,-i \}$, as $\mathrm{H}^0(X,M_L)=0$ if $i=k$. \end{proof}

\section{The Geometric Syzygy Conjecture} \label{sec2}
In this section, we use the techniques used in our proof of Voisin's Theorem to resolve the Geometric Syzygy Conjecture for extremal syzygies of generic curves of even genus. We stick with the notation from Section \ref{sec1}.  As before, consider the exact sequence $0 \to \mathcal{S} \to \pi^* \mathcal{M} \to \Gamma \to 0.$
\begin{thm} \label{real-thm}
The natural morphism $\psi: \mathrm{H}^1(B, \wedge^k \pi^* \mathcal{M} \otimes {p'}^*L) \to \mathrm{H}^1(B, \wedge^k \Gamma \otimes {p'}^*L)$ is surjective.
\end{thm}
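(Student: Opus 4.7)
The plan is to extend the method from Section \ref{sec1} by applying the Akin--Buchsbaum--Weyman (ABW) resolution of $\bigwedge^k\Gamma$ coming from $0\to\mathcal{S}\to\pi^*\mathcal{M}\to\Gamma\to 0$, twisting by ${p'}^*L$, and reducing to cohomological vanishings that can be verified by pushdown to $X\times\PP$ and K\"unneth.

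In characteristic zero, the ABW formula produces the exact resolution
$$0\to\mathrm{Sym}^k\mathcal{S}\to\pi^*\mathcal{M}\otimes\mathrm{Sym}^{k-1}\mathcal{S}\to\cdots\to\bigwedge^{k-1}\pi^*\mathcal{M}\otimes\mathcal{S}\to\bigwedge^k\pi^*\mathcal{M}\to\bigwedge^k\Gamma\to 0.$$
Twisting by ${p'}^*L$ and iteratively breaking into short exact sequences, the surjectivity of $\psi$ is implied by the vanishings
$$\mathrm{H}^{j+1}\!\left(B,\;\bigwedge^{k-j}\pi^*\mathcal{M}\otimes\mathrm{Sym}^j\mathcal{S}\otimes{p'}^*L\right)=0,\qquad 1\leq j\leq k.\qquad(\ast)$$

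To establish $(\ast)$, use the defining sequence $0\to\mathcal{S}\to\pi^*\mathcal{G}\to{p'}^*L\otimes I_D\to 0$ from Section \ref{sec1}. Since ${p'}^*L\otimes I_D$ is a line bundle, one obtains
$$0\to\mathrm{Sym}^j\mathcal{S}\to\mathrm{Sym}^j\pi^*\mathcal{G}\to\mathrm{Sym}^{j-1}\pi^*\mathcal{G}\otimes{p'}^*L\otimes I_D\to 0.$$
Tensoring by $\bigwedge^{k-j}\pi^*\mathcal{M}\otimes{p'}^*L$ and pushing down via $\pi_*$ (using $\pi_*\mathcal{O}_B\simeq\mathcal{O}_{X\times\PP}$, $\pi_*I_D\simeq I_{\mathcal{Z}}$, and the vanishing of $R^i\pi_*\mathcal{O}_B$, $R^i\pi_*I_D$ for $i>0$, together with the projection formula), the long exact sequence in cohomology reduces $(\ast)$ to the following pair of vanishings on $X\times\PP$:
\begin{align*}
\mathrm{H}^{j+1}\!\left(X\times\PP,\;\bigwedge^{k-j}\mathcal{M}\otimes\mathrm{Sym}^j\mathcal{G}\otimes p^*L\right)&=0,\\
\mathrm{H}^j\!\left(X\times\PP,\;\bigwedge^{k-j}\mathcal{M}\otimes\mathrm{Sym}^{j-1}\mathcal{G}\otimes p^*L^{\otimes 2}\otimes I_{\mathcal{Z}}\right)&=0.
\end{align*}

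These are in turn verified by expanding $\mathrm{Sym}^\bullet\mathcal{G}$ via the two-term resolution deduced from $0\to q^*\mathcal{O}_{\PP}(-2)\to\mathrm{H}^0(E)\otimes q^*\mathcal{O}_{\PP}(-1)\to\mathcal{G}\to 0$, expanding $p^*L\otimes I_{\mathcal{Z}}$ via $0\to\mathcal{O}_X\boxtimes\mathcal{O}_{\PP}(-2)\to E\boxtimes\mathcal{O}_{\PP}(-1)\to p^*L\otimes I_{\mathcal{Z}}\to 0$, and applying K\"unneth. The essential inputs are $\mathrm{H}^b(\mathcal{O}_{\PP}(d))=0$ for $-k-1\leq d\leq-1$ and all $b$, together with the K3 vanishings $\mathrm{H}^1(\mathcal{O}_X)=\mathrm{H}^1(X,L)=0$. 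I expect the main technical point to be the top case $j=k$, i.e.\ $\mathrm{H}^{k+1}(B,\mathrm{Sym}^k\mathcal{S}\otimes{p'}^*L)=0$. This is where the $L^{\otimes 2}$ twist first interacts with the only nontrivial line-bundle cohomology on $\PP$ in the relevant range, namely $\mathrm{H}^{k+1}(\mathcal{O}_{\PP}(-k-2))\neq 0$, and the resulting obstruction is killed precisely by $\mathrm{H}^1(X,L)=0$ --- which is where the K3 hypothesis enters in its sharpest form.
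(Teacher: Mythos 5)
Your proposal is correct and follows essentially the same route as the paper: the ABW resolution of $\bigwedge^k\Gamma$ reduces the claim to the vanishings $(\ast)$, the defining sequence $0\to\mathcal{S}\to\pi^*\mathcal{G}\to{p'}^*L\otimes I_D\to 0$ and pushforward along $\pi$ reduce those to exactly the two vanishings on $X\times\PP$ you state, and the two Koszul-type resolutions plus K\"unneth finish the argument. You have also correctly located the one place where $\mathrm{H}^1(X,L)=0$ is genuinely needed, namely the term $\mathrm{H}^{k+2}\bigl(L\boxtimes\mathcal{O}_{\PP}(-k-2)\bigr)$ arising in the case $i=k$.
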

\begin{proof}
From the exact sequence
\begin{align*}
\ldots \to \mathcal{S} \otimes \wedge^{k-1} \pi^* \mathcal{M}   \otimes {p'}^*L \to \wedge^k  \mathcal{M}   \otimes {p'}^*L \to \wedge^k \Gamma \otimes {p'}^*L \to 0,
\end{align*}
it suffices to show $\mathrm{H}^{1+i}(\mathrm{Sym}^i \mathcal{S} \otimes \wedge^{k-i} \pi^* \mathcal{M} \otimes {p'}^*L)=0$
for $1 \leq i \leq k$. From the exact sequence
$$0 \to \mathrm{Sym}^{i} \mathcal{S} \to \mathrm{Sym}^i( \pi^*\mathcal{G}) \to \mathrm{Sym}^{i-1}(\pi^*\mathcal{G})\otimes {p'}^*L \otimes I_D \to 0$$
it suffices to have
\begin{align*}
\mathrm{H}^{1+i}\left(X \times \PP,\mathrm{Sym}^i (\mathcal{G}) \otimes \wedge^{k-i} \mathcal{M} \otimes {p}^*L\right)=\mathrm{H}^i\left(X \times \PP,\mathrm{Sym}^{i-1}(\mathcal{G})\otimes \wedge^{k-i}  \mathcal{M} \otimes {p}^*L^{\otimes 2} \otimes I_{\mathcal{Z}}\right)=0
\end{align*}
By taking $\mathrm{Sym}^i$ of the exact sequence
$$0 \to q^*\mathcal{O}_{\PP}(-2) \to \mathrm{H}^0(E) \otimes q^*\mathcal{O}_{\PP}(-1) \to \mathcal{G} \to 0$$
it suffices to show the four vanishings
{\small{\begin{align*}
&\mathrm{H}^{2+i}\left( \wedge^{k-i} M_L (L) \boxtimes   \mathcal{O}_{\PP}(-i-1)  \right)=0, & &\mathrm{H}^{1+i}\left(\wedge^{k-i} M_L (L) \boxtimes \mathcal{O}_{\PP}(-i)  \right)=0,\\
&\mathrm{H}^{1+i}\left((\wedge^{k-i} M_L (2L) \boxtimes \mathcal{O}_{\PP}(-i) )\otimes I_{\mathcal{Z}} \right)=0, & &\mathrm{H}^{i}\left((\wedge^{k-i} M_L (2L) \boxtimes \mathcal{O}_{\PP}(-i+1) )\otimes I_{\mathcal{Z}} \right)=0.
\end{align*}}}
The first two claims follow from the K\"unneth formula, for $1 \leq i \leq k$. For the last two claims, we use the short exact sequence
$$0 \to L^{-1} \boxtimes \mathcal{O}_{\PP}(-2) \to E(L^{-1}) \boxtimes \mathcal{O}_{\PP}(-1) \to I_{\mathcal{Z}} \to 0,$$
so it suffices to have the four vanishings
{\small{\begin{align*}
&\mathrm{H}^{2+i}\left(\wedge^{k-i} M_L (L) \boxtimes \mathcal{O}_{\PP}(-i-2)  \right)=0,
& &\mathrm{H}^{1+i}\left( \wedge^{k-i} M_L \otimes E(L) \boxtimes \mathcal{O}_{\PP}(-i-1)  \right)=0,\\
&\mathrm{H}^{1+i}\left( \wedge^{k-i} M_L (L) \boxtimes \mathcal{O}_{\PP}(-i-1)  \right)=0,
& &\mathrm{H}^{i}\left(\wedge^{k-i} M_L \otimes E(L) \boxtimes \mathcal{O}_{\PP}(-i)  \right)=0.
\end{align*}}}
This follows from the K\"unneth formula, using $\mathrm{H}^1(X,L)=0$ if $i=k$ in the first vanishing.
\end{proof}

The Geometric Syzygy Conjecture in even genus now follows readily from Theorem \ref{real-thm}. 
\begin{lem}
With notation as in Section \ref{sec1}, we have $\wedge^k \Gamma \simeq I_D \otimes {q'}^* \mathcal{O}_{\PP}(k)$.
\end{lem}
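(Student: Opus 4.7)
The plan is to compute $\wedge^k\Gamma \simeq \det\Gamma$ (since $\Gamma$ has rank $k$) by propagating determinants through the two short exact sequences that build $\Gamma$ out of data on $\PP$.

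The starting point is the defining sequence $0\to\Gamma\to {q'}^*\mathcal{W}\to {p'}^*L_{|_D}\to 0$. Because $D$ is a smooth Cartier divisor on $B$ and ${p'}^*L_{|_D}$ is a line bundle on $D$, this cokernel has ``multiplicity one'' along $D$ when viewed as a torsion sheaf on $B$. The standard determinant formula for such a sequence then gives
$$\det\Gamma \simeq {q'}^*\det\mathcal{W}\otimes\mathcal{O}_B(-D) \simeq {q'}^*\det\mathcal{W}\otimes I_D,$$
so it suffices to identify $\det\mathcal{W}\simeq\mathcal{O}_\PP(k)$.

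For this, I would use two exact sequences on $X\times\PP$. The definition of $\mathcal{W}$, combined with $q_*p^*L\simeq \mathrm{H}^0(X,L)\otimes\mathcal{O}_\PP$, produces
$$0\to q_*(p^*L\otimes I_\mathcal{Z})\to \mathrm{H}^0(X,L)\otimes\mathcal{O}_\PP\to \mathcal{W}\to 0.$$
Pushing forward the universal sequence $0\to\mathcal{O}_X\boxtimes\mathcal{O}_\PP(-2)\to E\boxtimes\mathcal{O}_\PP(-1)\to p^*L\otimes I_\mathcal{Z}\to 0$ along $q$, and noting that the higher direct images vanish because $\mathrm{H}^1(\mathcal{O}_X)=0$ and $h^1(E)=0$, yields
$$0\to\mathcal{O}_\PP(-2)\to \mathrm{H}^0(E)\otimes\mathcal{O}_\PP(-1)\to q_*(p^*L\otimes I_\mathcal{Z})\to 0.$$
Taking determinants in these two sequences gives $\det q_*(p^*L\otimes I_\mathcal{Z})\simeq\mathcal{O}_\PP(2)\otimes\mathcal{O}_\PP(-(k+2))=\mathcal{O}_\PP(-k)$ and hence $\det\mathcal{W}\simeq\mathcal{O}_\PP(k)$, completing the argument.

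The only step which is not entirely formal is the first: the claim that the torsion quotient ${p'}^*L_{|_D}$ contributes exactly a single $\mathcal{O}_B(-D)$ twist to the determinant. This relies on the standard fact that for a short exact sequence $0\to F\to G\to T\to 0$ with $F,G$ locally free of the same rank and $T$ a line bundle on a smooth Cartier divisor $D$, one has $\det F\simeq\det G\otimes\mathcal{O}(-D)$, independently of which line bundle on $D$ the cokernel $T$ happens to be. Everything else reduces to an elementary rank-and-degree bookkeeping on $\PP$.
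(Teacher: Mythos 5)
Your proposal is correct and follows essentially the same route as the paper: take determinants of $0 \to \Gamma \to {q'}^*\mathcal{W} \to {p'}^*L_{|_D} \to 0$ to reduce to $\det \mathcal{W} \simeq \mathcal{O}_{\PP}(k)$, then compute this via the two exact sequences $0 \to q_*(p^*L \otimes I_{\mathcal{Z}}) \to q_*p^*L \to \mathcal{W} \to 0$ and $0 \to \mathcal{O}_{\PP}(-2) \to \mathrm{H}^0(E) \otimes \mathcal{O}_{\PP}(-1) \to q_*(p^*L \otimes I_{\mathcal{Z}}) \to 0$, using $h^0(E)=k+2$. Your explicit justification of the $\mathcal{O}_B(-D)$ contribution from the torsion quotient is a point the paper leaves implicit, but the argument is the same.
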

\begin{proof}
By taking determinants of the exact sequence $0 \to \Gamma \to {q'}^* \mathcal{W} \to {p'}^* L_{|_D} \to 0$, it suffices to show $\det \mathcal{W} \simeq \mathcal{O}_{\PP}(k)$. From $0 \to q_*(p^*L \otimes I_{\mathcal{Z}}) \to q_*p^*L \to \mathcal{W} \to 0$, we see $\det \mathcal{W}=\det (q_*(p^*L \otimes I_{\mathcal{Z}}))^{-1}$. We now deduce the claim from the exact sequence
$$0 \to \mathcal{O}_{\PP}(-2) \to \mathrm{H}^0(E) \otimes \mathcal{O}_{\PP}(-1) \to q_*(p^*L \otimes I_{\mathcal{Z}}) \to 0.$$\end{proof}
\begin{cor} \label{main-cor-nat-iso}
The morphism $\psi: \mathrm{H}^1(\wedge^k \pi^* \mathcal{M} \otimes {p'}^*L) \to \mathrm{H}^1(\wedge^k \Gamma \otimes {p'}^*L)$ is an isomorphism and induces a natural isomorphism $\mathrm{K}_{k-1,1}(X,L) \simeq \mathrm{Sym}^{k-2}\mathrm{H}^0(X,E)  $.
\end{cor}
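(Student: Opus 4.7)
The plan is to combine the surjectivity already established in Theorem \ref{real-thm} with a direct computation of the target group and a dimension count against Voisin's Theorem \ref{main-thm}. Since $\psi$ is surjective, the remaining tasks are (i) computing $\mathrm{H}^1(B, \wedge^k\Gamma \otimes {p'}^*L)$ naturally as $\mathrm{Sym}^{k-2}\mathrm{H}^0(X,E)$, and (ii) verifying that $\psi$ is injective.

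\textbf{First, identify the target.} Using the preceding lemma, $\wedge^k\Gamma \simeq I_D \otimes {q'}^*\mathcal{O}_{\PP}(k)$. Pushing down along $\pi$ and invoking $\pi_*I_D \simeq I_{\mathcal{Z}}$ together with $R^i\pi_*I_D = 0$ for $i\geq 1$ gives
$$\mathrm{H}^1(B, \wedge^k\Gamma \otimes {p'}^*L) \simeq \mathrm{H}^1(X\times\PP, I_{\mathcal{Z}} \otimes p^*L \otimes q^*\mathcal{O}_{\PP}(k)).$$
I will twist the defining sequence $0 \to \mathcal{O}_X \boxtimes \mathcal{O}_{\PP}(-2) \to E \boxtimes \mathcal{O}_{\PP}(-1) \to p^*L \otimes I_{\mathcal{Z}} \to 0$ by $q^*\mathcal{O}_{\PP}(k)$. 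Since $h^1(E)=h^2(E)=0$ and $\mathcal{O}_{\PP}(k-1)$ has no higher cohomology, the middle term is cohomologically silent in positive degrees, and the connecting map yields a natural isomorphism
$$\mathrm{H}^1(X\times\PP, I_{\mathcal{Z}} \otimes p^*L \otimes q^*\mathcal{O}_{\PP}(k)) \simeq \mathrm{H}^2(X,\mathcal{O}_X) \otimes \mathrm{H}^0(\PP, \mathcal{O}_{\PP}(k-2)) \simeq \mathrm{Sym}^{k-2}\mathrm{H}^0(X,E),$$
after the standard trivialization $\mathrm{H}^2(\mathcal{O}_X) \simeq \C$ and the identification of global sections of line bundles on $\PP(\mathrm{H}^0(E))$. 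The source is identified with $\mathrm{K}_{k-1,1}(X,L)$ by the kernel bundle description, combined with K\"unneth (using $h^1(\mathcal{O}_X)=0$) and the projection formula on $\pi$, exactly as at the start of Section \ref{sec1}.

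\textbf{Second, injectivity via a dimension count.} Voisin's Theorem \ref{main-thm} yields $\mathrm{K}_{k,1}(X,L)=0$. Feeding this into the Euler characteristic computation for the linear strand of the Koszul complex on a K3 of genus $2k$ (see \cite[\S 4.1]{farkas-progress}) pins down $\dim\mathrm{K}_{k-1,1}(X,L)=\binom{2k-1}{k-2}=\dim\mathrm{Sym}^{k-2}\mathrm{H}^0(X,E)$. Because $\psi$ is a surjection between finite dimensional spaces of equal dimension, it is an isomorphism, and composing with the identification from the first step delivers the claimed natural isomorphism.

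The main delicate point will be tracking the naturality of the isomorphism in the first step: every link in the chain (K\"unneth, the projection formula, and the identification $\mathrm{H}^0(\mathcal{O}_\PP(k-2))\simeq \mathrm{Sym}^{k-2}\mathrm{H}^0(X,E)$ in the convention where $\PP=\PP(\mathrm{H}^0(E))$) must be a canonical isomorphism of functors of $E$, not merely an equality of dimensions. A route bypassing the dimension count would be to directly establish the parallel vanishings $\mathrm{H}^i(B, \mathrm{Sym}^i\mathcal{S} \otimes \wedge^{k-i}\pi^*\mathcal{M} \otimes {p'}^*L) = 0$ for $1\leq i\leq k$, which are shifted by one cohomological degree from those of Theorem \ref{real-thm}; this would kill $\mathrm{H}^1$ of the kernel of $\wedge^k\pi^*\mathcal{M} \otimes {p'}^*L \twoheadrightarrow \wedge^k\Gamma \otimes {p'}^*L$ and render $\psi$ injective without appealing to Voisin's Theorem, but it appears more laborious than the dimension argument.
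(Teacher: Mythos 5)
Your proposal follows essentially the same route as the paper: identify $\mathrm{H}^1(B,\wedge^k\Gamma\otimes {p'}^*L)$ via $\wedge^k\Gamma\simeq I_D\otimes {q'}^*\mathcal{O}_{\PP}(k)$ and the Koszul resolution of $p^*L\otimes I_{\mathcal{Z}}$, invoke the surjectivity from Theorem \ref{real-thm}, and close with the dimension count supplied by Voisin's vanishing. The only discrepancy is bookkeeping of duals: in the paper's geometric convention for $\PP(\mathrm{H}^0(E))$ the source is $\mathrm{H}^1(\wedge^k M_L(L))\simeq \mathrm{K}_{k-1,2}(X,L)\simeq \mathrm{K}_{k-1,1}(X,L)^{\vee}$ and the target is $\mathrm{Sym}^{k-2}\mathrm{H}^0(E)^{\vee}$, so the paper dualizes the surjection to obtain the injection $\mathrm{Sym}^{k-2}\mathrm{H}^0(E)\hookrightarrow \mathrm{K}_{k-1,1}(X,L)$; your two identifications each drop one dual and therefore cancel, leaving the correct final statement.
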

\begin{proof}
We have $\mathrm{H}^1(B, I_D \otimes {q'}^* \mathcal{O}_{\PP}(k))\simeq \mathrm{H}^1(X \times \PP, I_{\mathcal{Z}} \otimes {q}^* \mathcal{O}_{\PP}(k))$. From
$$0 \to \mathcal{O}_X \boxtimes \mathcal{O}_{\PP}(-2) \to E \boxtimes \mathcal{O}_{\PP}(-1) \to p^*L \otimes  I_{\mathcal{Z}} \to 0,$$
we obtain an isomorphism $\mathrm{H}^1(B, \wedge^k \Gamma \otimes {p'}^*L) \simeq \mathrm{H}^0(\mathcal{O}_{\PP}(k-2)) \simeq {\mathrm{Sym}^{k-2}\mathrm{H}^0(X,E)}^{\vee}$. Theorem \ref{real-thm} gives a surjective map $\mathrm{K}_{k-1,2}(X,L) \to {\mathrm{Sym}^{k-2}\mathrm{H}^0(X,E)}^{\vee}$. Dualizing, we have an injective map $\mathrm{Sym}^{k-2}\mathrm{H}^0(X,E) \to \mathrm{K}_{k-1,1}(X,L).$ Voisin's Theorem in even genus, as proven in Section \ref{sec1}, implies $\dim \mathrm{Sym}^{k-2}\mathrm{H}^0(X,E) = \dim \mathrm{K}_{k-1,1}(X,L)$,  \cite[\S 4.1]{farkas-progress}, completing the proof.\end{proof}

We record an observation for future use. From the sequence $0 \to \mathcal{S} \to \pi^* \mathcal{M} \to \Gamma \to 0$, the isomorphism $\psi: \mathrm{H}^1( \wedge^k \pi^* \mathcal{M} \otimes {p'}^*L) \xrightarrow{\sim} \mathrm{H}^1(\wedge^k \Gamma \otimes {p'}^*L)$ is Serre dual to
$$\psi^{\vee} \; : \; \mathrm{H}^{k+2}( \wedge^k \mathcal{S} \otimes \omega_B) \to \mathrm{H}^{k+2}( \wedge^k \pi^* \mathcal{M}\otimes \omega_B).$$
\begin{proof}[Proof of Theorem \ref{geo-thm-intro}]
We have the exact sequence $0 \to \Gamma' \to q^* \mathcal{W} \to p^*L_{|_{\mathcal{Z}}} \to 0,$ where we have set $\Gamma':=\pi_* \Gamma$. The image of the natural map $\wedge^k \Gamma' \to \wedge^k q^* \mathcal{W}  \simeq \mathcal{O}_{\PP}(k)$ is $I_{\mathcal{Z}} \otimes q^*\mathcal{O}_{\PP}(k)$ from \cite[Cor.\ 3.7]{ein-lazarsfeld-asymptotic}. The induced surjection $\wedge^k \Gamma' \to I_{\mathcal{Z}} \otimes q^*\mathcal{O}_{\PP}(k)$ can be identified with the natural map
$$\wedge^k \pi_* \Gamma \to \pi_*\wedge^k \Gamma \simeq \pi_*(I_D \otimes {q'}^*\mathcal{O}_{\PP}(k)) \hookrightarrow \pi_* \wedge^k {q'}^* \mathcal{W} \simeq \wedge^k q^* \mathcal{W} . $$ The map $\psi: \mathrm{H}^1(B, \wedge^k \pi^* \mathcal{M} \otimes {p'}^*L) \to \mathrm{H}^1(B, \wedge^k \Gamma \otimes {p'}^*L)$ can be identified with the composition
$$\mathrm{H}^1(X\times \PP, \wedge^k\mathcal{M} \otimes p^*L) \to \mathrm{H}^1(X\times \PP, \wedge^k \Gamma' \otimes p^*L) \to \mathrm{H}^1( X\times \PP, (L \boxtimes \mathcal{O}_{\PP}(k))\otimes I_{\mathcal{Z}}),$$
where the first map is induced from the natural surjection $\mathcal{M} \to \Gamma'$. Since $q_*\wedge^k\mathcal{M} \otimes p^*L \simeq \mathrm{H}^0(X, \wedge^k M_L(L)) \otimes \mathcal{O}_{\PP}$, we have $\mathrm{H}^i(q_*\wedge^k\mathcal{M} \otimes p^*L)=0$ for $i>0$. Thus the Leray spectral sequence gives an isomorphism
$$\mathrm{H}^1(X \times \PP, \wedge^k\mathcal{M} \otimes p^*L) \simeq \mathrm{H}^0(\PP, R^1q_*\wedge^k\mathcal{M} \otimes p^*L).$$
Next, the exact sequence $0 \to \mathcal{O}_{\PP}(k-2) \to \mathrm{H}^0(E) \otimes \mathcal{O}_{\PP}(k-1) \to q_*((L \boxtimes \mathcal{O}_{\PP}(k))\otimes I_{\mathcal{Z}}) \to 0$ gives $\mathrm{H}^i(q_*((L \boxtimes \mathcal{O}_{\PP}(k))\otimes I_{\mathcal{Z}}))=0$ for $i >0$ so 
$$\mathrm{H}^1( X \times \PP,(L \boxtimes \mathcal{O}_{\PP}(k))\otimes I_{\mathcal{Z}})\simeq \mathrm{H}^0(\PP, R^1 q_*((L \boxtimes \mathcal{O}_{\PP}(k))\otimes I_{\mathcal{Z}})).$$
The map $\psi$ is thus naturally identified with the global section map applied to the morphism
$$\mathrm{K}_{k-1,1}(X,L)^{\vee} \otimes \mathcal{O}_{\PP} \simeq R^1q_*\wedge^k\mathcal{M} \otimes p^*L \to R^1 q_*((L \boxtimes \mathcal{O}_{\PP}(k))\otimes I_{\mathcal{Z}}),$$
of vector bundles. Note that $R^1 q_*((L \boxtimes \mathcal{O}_{\PP}(k))\otimes I_{\mathcal{Z}})$ is a line bundle from the proof of Lemma \ref{very-first-lem}. From
$0 \to \mathcal{O}_X \boxtimes \mathcal{O}_{\PP}(-2) \to E \boxtimes \mathcal{O}_{\PP}(-1) \to p^*L \otimes  I_{\mathcal{Z}} \to 0,$
we have $$R^1 q_*((L \boxtimes \mathcal{O}_{\PP}(k))\otimes I_{\mathcal{Z}}) \simeq R^2q_*\mathcal{O}_{X \times \PP} \otimes \mathcal{O}_{\PP}(k-2)\simeq \mathcal{O}_{\PP}(k-2),$$
using that $R^2q_*\mathcal{O}_{X \times \PP} \simeq \mathcal{O}_{\PP}$ by relative duality. Since the induced morphism
\begin{align} \label{bundle-ind-vero}
\mathrm{K}_{k-1,1}(X,L)^{\vee} \otimes \mathcal{O}_{\PP} \to R^1 q_*((L \boxtimes \mathcal{O}_{\PP}(k))\otimes I_{\mathcal{Z}}) \simeq \mathcal{O}_{\PP}(k-2)
\end{align}
is surjective on global sections and $\mathcal{O}_{\PP}(k-2)$ is globally generated, it must be surjective as a morphism of sheaves. Morphism (\ref{bundle-ind-vero}) therefore induces a Veronese morphism $$\widetilde{\psi} : \PP(\mathrm{H}^0(E)) \to \PP(\mathrm{K}_{k-1,1}(X,L))$$ of degree $k-2$.\smallskip

To complete the proof, it only remains to show that, for any nonzero $t \in \mathrm{H}^0(E)$, we have
$$\widetilde{\psi}([t])=[\mathrm{K}_{k-1,1}\left(X,L,\mathrm{H}^0(L \otimes I_{Z(t)})\right)].$$
The fiber of the morphism (\ref{bundle-ind-vero}) over $[t]$ is identified with the natural composition
$$\mathrm{H}^1(X,\wedge^k M_L(L)) \to \mathrm{H}^1(X,\wedge^k \Gamma'_t(L)) \to \mathrm{H}^1(X,L \otimes I_{Z(t)}),$$
using the notation from Section \ref{sec1}. As above, this may be identified with the natural surjection
$$\mathrm{H}^1(B_t,\wedge^k \pi^*_tM_L(L)) \twoheadrightarrow \mathrm{H}^1(B_t,\wedge^k \Gamma_t (\pi^*_t L )).$$
From the exact sequence $0 \to \mathcal{S}_t \to \pi_t^* M_L \to \Gamma_t \to 0$ and the isomorphisms $\wedge^k M_L(L) \simeq \wedge^k M_L^{\vee}$, $\wedge^k \Gamma_t (\pi^*_t L) \simeq \wedge^k \mathcal{S}_t^{\vee}$, we may identify the fiber of (\ref{bundle-ind-vero}) with the map $\mathrm{H}^1(X,\wedge^k M_L^{\vee}) \twoheadrightarrow \mathrm{H}^1(B_t, \wedge^k \mathcal{S}_t^{\vee})$, which is Serre dual to
$$ \mathrm{H}^1(B_t, \wedge^k \mathcal{S}_t(D_t)) \hookrightarrow \mathrm{H}^1(B_t,\wedge^k \pi_t^* M_L(D_t)). $$ Using the Kernel Bundle description of syzygies, this may be interpreted as an inclusion
$$\mathrm{K}_{k-1,1}(B_t,D_t, \pi_t^*L-D_t)\hookrightarrow \mathrm{K}_{k-1,1}(B_t,D_t,\pi_t^*L).$$
Since $\mathrm{H}^0(B_t, \pi_t^*L^{\otimes n}(D_t)) \simeq \mathrm{H}^0(B_t, \pi_t^*L^{\otimes n})\simeq \mathrm{H}^0(X, L^{\otimes n})$ for any $n$,
this inclusion is naturally identified with $\mathrm{K}_{k-1,1}\left(X,L,\mathrm{H}^0(L \otimes I_{Z(t)})\right) \hookrightarrow \mathrm{K}_{k-1,1}(X,L),$
as required.
\end{proof}

It is well-known that Theorem \ref{geo-thm-intro}, combined with an observation of Voisin, implies Corollary \ref{geo-cor}, see the unpublished \cite[\S 11]{bothmer-preprint}. For the convenience of the reader we provide the proof.
\begin{proof}[Proof of Corollary \ref{geo-cor}]
Let $X$ be as in Theorem \ref{geo-thm-intro} and let $C \in |L|$ be general. We have a surjective, finite morphism
$$d: \mathrm{Gr}_2(\mathrm{H}^0(E)) \to |L|$$
which takes a two dimensional subspace $W \seq \mathrm{H}^0(E)$ to the degeneracy locus of the evaluation morphism $W \otimes \mathcal{O}_X \xrightarrow{ev} E$, \cite{V1}. We may naturally identify the cokernel of $ev$ with $\mathrm{K}_C \otimes A^{-1}$, where $C=d(W) \in |L|$ and $A \in W^1_{k+1}(C)$, where $W^1_{k+1}(C)$ is the Brill--Noether locus of line bundles of degree $k+1$ with two sections, \cite{lazarsfeld-BNP}, \cite{aprodu-farkas}. If $C$ is sufficiently general, then $d^{-1}(C) \simeq W^1_{k+1}(C)$ is reduced, \cite{lazarsfeld-BNP}. Under the identification $d^{-1}(C) \simeq W^1_{k+1}(C)$, a line bundle $A \in W^1_{k+1}(C)$ is mapped to $\mathrm{H}^0(A)^{\vee}\seq \mathrm{H}^0(E)$. \smallskip

 By \cite[Proof of Prop.\ 7]{V1}, the spaces $\rm{Sym}^{k-2}\mathrm{H}^0(A)^{\vee}$, $A \in W^1_{k+1}(C)$ generate $\rm{Sym}^{k-2}\mathrm{H}^0(E)$. Each such $\mathrm{H}^0(A)^{\vee}$ corresponds to a line $T_A \seq \PP(\mathrm{H}^0(E))$ and set $\displaystyle{T:=\bigcup_{A \in W^1_{k+1}(C)} T_A}$ to be the union of these lines. The image of $T$ under $\widetilde{\psi}: \PP(\mathrm{H}^0(E)) \to \PP(\mathrm{K}_{k-1,1}(X,L))$ is non-degenerate so $\mathrm{K}_{k-1,1}(X,L)$ is generated by the subspaces $\mathrm{K}_{k-1,1}(X,L, \mathrm{H}^0(L \otimes I_{Z(s)})$, where $s \in \mathrm{H}^0(E)$ is a section corresponding to a $g^1_{k+1}$ on the fixed curve $C$. After restricting to $C$, such spaces are identified with subspaces of the form $$\mathrm{K}_{k-1,1}(C,\omega_C, \mathrm{H}^0(\omega_C \otimes A^{\vee})) \seq \mathrm{K}_{k-1,1}(C,\omega_C),\; \; A \in W^1_{k+1}(C)$$ under the isomorphism $\mathrm{K}_{k-1,1}(X,L) \simeq \mathrm{K}_{k-1,1}(C, \omega_C)$.
\end{proof}

\section{Voisin's Theorem in Odd Genus} \label{odd-genus}
In this section we prove Green's Conjecture for generic curves of odd genus $g=2k+1 \geq 9$. We broadly follow a similar strategy as \cite{V2} to reduce to the even genus case, however the Secant Bundle approach provides significant simplifications. As in Voisin's proof, we work with a K3 surface $X$ of Picard rank two over $\C$, with $\text{Pic}(X)$ generated by a big and nef line bundle $L'$ with $(L')^2=2k+2$ together with a smooth rational curve $\Delta$ with $(L' \cdot \Delta)=0$.\smallskip

The following lemma is adapted from \cite[Prop.\ 1]{V2}. 
\begin{lem} \label{BNP-gen}
 Set $L_n:=L'-n\Delta$ for $0 \leq n \leq 3$.  Then $L_n$ is base-point free and is further ample for $0<n \leq 2$. Furthermore, the linear system $|L_n|$ cannot be written as a sum of two pencils. In particular, smooth curves $C \in |L_n|$ are Brill--Noether general. If $C \in |L_n|$ is general then $C$ is, in addition, Petri general. 
\end{lem}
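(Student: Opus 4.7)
My plan is to derive all five claims from the diagonal shape of the intersection form on $\mathrm{Pic}(X)$. First, since $\Delta$ is a smooth rational curve on the K3 surface $X$, adjunction forces $\Delta^2 = -2$. Combined with the given $(L')^2 = 2k+2$ and $L' \cdot \Delta = 0$, the Gram matrix on $\mathrm{Pic}(X) = \mathbb{Z}L' \oplus \mathbb{Z}\Delta$ is diagonal with entries $2k+2$ and $-2$. In particular, for any class $C = a L' + b\Delta$ I will use
\[
L_n^2 = 2k+2 - 2n^2, \qquad L_n \cdot C = (2k+2)a + 2nb, \qquad C^2 = (2k+2)a^2 - 2b^2,
\]
together with the adjunction bound $C^2 \geq -2$ for irreducible $C$.

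For base-point-freeness I will first verify nefness of $L_n$ in the range $0 \leq n \leq 3$. For an irreducible $C \neq \Delta$, nefness of $L'$ gives $a \geq 0$ and $C \cdot \Delta \geq 0$ gives $b \leq 0$; the adjunction bound $b^2 \leq (k+1)a^2 + 1$ then forces $L_n \cdot C = (2k+2)a + 2nb \geq 0$ by a finite case check in $n$, using $g = 2k+1 \geq 9$. Since $L_n \cdot \Delta = 2n \geq 0$ as well, $L_n$ is nef. To pass from nef to bpf I will apply Saint-Donat's theorem: because $L_n \cdot E = (2k+2)a + 2nb$ is always even for every class $E \in \mathrm{Pic}(X)$, no effective class with $E^2 = 0$ can satisfy $L_n \cdot E = 1$, and together with $L_n^2 > 0$ this suffices.

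For ampleness when $n = 1, 2$ I will use Nakai--Moishezon: given nefness, ampleness reduces to $L_n \cdot C > 0$ for every irreducible $C$, and a vanishing $L_n \cdot C = 0$ forces $(k+1)a + nb = 0$, which combined with $b^2 \leq (k+1)a^2 + 1$ leaves only $C = \Delta$; but $L_n \cdot \Delta = 2n > 0$. For the claim that $|L_n|$ is not a sum of two pencils, I will suppose $L_n = A + B$ with $h^0(A), h^0(B) \geq 2$, write $A = a_1 L' + b_1 \Delta$, $B = a_2 L' + b_2 \Delta$ with $a_1 + a_2 = 1$ and $b_1 + b_2 = -n$, note that the pencil condition together with Riemann--Roch forces $A^2, B^2 \geq 0$, and run through the finitely many possible $(a_i, b_i)$, each ruled out by a direct intersection computation using the diagonal form.

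The last two claims follow from known K3 results. Brill--Noether generality of smooth $C \in |L_n|$ is the content of Lazarsfeld's theorem \cite{lazarsfeld-BNP}, applicable precisely when $L_n$ is bpf and $|L_n|$ does not split as a sum of two pencils. Petri generality of a generic smooth member then follows from a standard specialization argument on the universal curve, cf.\ \cite{aprodu-farkas}. The main technical obstacle I anticipate is the finite case check for $n = 3$ and the enumeration of pencil decompositions, where the numerical bounds from adjunction become tight; this is exactly the reason the lower bound $g \geq 9$ is required in the statement.
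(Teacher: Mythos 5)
Your overall strategy --- lattice computations for nef/ample, evenness of the intersection form plus Saint-Donat for base-point-freeness, and Lazarsfeld's theorem for Brill--Noether/Petri generality --- matches the paper's, and your ampleness and bpf steps are essentially what the paper does. The real divergence is the nefness argument, where the paper sidesteps your case check entirely: if $L_n\cdot R<0$ for some irreducible $R$, then $R$ is a fixed component of $|L_n|$, so both $R$ and $L_n-R$ are effective; pairing each against the big and nef class $L'$ forces $R=L'+b\Delta$, whence $\dim|L_n|=\dim|L_n-R|=\dim|c\Delta|=0$, contradicting $\dim|L_n|>0$ (Riemann--Roch; this is the only place $g\geq 9$ enters, to guarantee $(L_3)^2\geq 0$). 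Your numerical check is workable in principle but is genuinely tight for $n=3$ and needs the integrality of $b$; more importantly, it must be run with $(L')^2=2g=4k+2$ --- the value forced by the requirement that curves in $|L_1|$ have genus $2k+1$, and the one implicitly used in the paper's own formula $b=-ag/n$ --- rather than the value $2k+2$ stated at the opening of Section 3, which appears to be a typo. With $(L')^2=2k+2$ your inequalities actually fail (already $(L_3)^2=2k-16<0$ for $g<17$, and the class $L'-2\Delta$ violates your bound at $g=9$), so as written the ``finite case check'' does not close.

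The other soft spot is the two-pencils claim. The assertion that $h^0(A)\geq 2$ together with Riemann--Roch forces $A^2\geq 0$ is false in general (only the moving part of $A$ satisfies this), and without it the set of candidate pairs $(a_i,b_i)$ is infinite, so the proposed enumeration does not get off the ground. The fix is the observation you are one line away from: since $a_1+a_2=1$ and each $a_i\geq 0$ (effectivity paired against $L'$), some $a_j=0$, so $A_j=b_j\Delta$ and $h^0(b_j\Delta)\leq 1$ for every $b_j$ --- a contradiction with $h^0(A_j)\geq 2$. That is exactly the paper's two-line argument; note it is a statement about $h^0$, not about intersection numbers. The citations for the final two claims are the appropriate ones.
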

\begin{proof}
Since $(L_n)^2 \geq 0$ and $(L_n \cdot L')>0$, we see $\dim |L_n|>0$ by Riemann--Roch. We claim $L_n$ is nef, i.e.\ there is no rational, base component $R \sim aL'+b\Delta $ of $L_n$ with $(L_n \cdot R)<0$. Otherwise, $R$ and $L_n-R$ would be effective and $a \neq 0$ so $a =1$, but then $\dim |L_n|=\dim|L_n-R|=0$. Since $(\alpha \cdot \beta)$ is even for all $\alpha, \beta \in \mathrm{Pic}(X)$, there are no divisors $E$ with $(L_n \cdot E)=1$, hence the nef bundles $L_n$ are base-point free, \cite[Prop.\ 8]{mayer}. If $L_n$ is not ample for $n \neq 0$, there is a rational curve $R\sim aL'+b\Delta$ with $(L_n \cdot R)=0$. Then $b=\frac{-ag}{n}$ with $a>0$, so $(R)^2=-2a^2g(\frac{g}{n^2}-1)<-2$.

For the last claim it suffices to show that we cannot write $L_n=A_1+A_2$ for divisors $A_i$ with $h^0(A_i)\geq 2$, $i=1,2$, \cite{lazarsfeld-BNP}. Writing $A_i=a_i L'+b_i \Delta$, we must have $a_j=0$ for some $j\in \{1,2 \}$. But then $h^0(A_j)=1$ which is a contradiction.
\end{proof}

Setting $L:=L_1=L'-\Delta$, a general $C \in |L|$ is a curve of genus $g=2k+1$. To verify Green's conjecture for $C$, it suffices to show $\mathrm{H}^1(\wedge^{k+1}M_L)=\mathrm{K}_{k,1}(X,L)=0$. We have an exact sequence $0 \to M_L \to M_{L'} \to \mathcal{O}(-\Delta) \to 0$
of vector bundles on $X$. This gives an exact sequence $$0 \to \wedge^{k+1}M_L \to \wedge^{k+1}M_{L'} \to \wedge^k M_L (-\Delta) \to 0.$$
The induced map $pr_{k}: \; \mathrm{H}^1(\wedge^{k+1}M_{L'}) \to \mathrm{H}^1(\wedge^k M_L (-\Delta))$ is called the \emph{projection map}.
\begin{lem} \label{proj-crit}
Suppose the projection map $pr_k$ is injective. Then $\mathrm{H}^1(\wedge^{k+1}M_L)=0$.
\end{lem}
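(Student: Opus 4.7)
The plan is to chase the long exact sequence in cohomology associated to the short exact sequence $0 \to \wedge^{k+1}M_L \to \wedge^{k+1}M_{L'} \to \wedge^k M_L(-\Delta) \to 0$ displayed just above the statement. The relevant segment reads
\begin{equation*}
\mathrm{H}^0(\wedge^{k+1}M_{L'}) \to \mathrm{H}^0(\wedge^k M_L(-\Delta)) \to \mathrm{H}^1(\wedge^{k+1}M_L) \to \mathrm{H}^1(\wedge^{k+1}M_{L'}) \xrightarrow{pr_k} \mathrm{H}^1(\wedge^k M_L(-\Delta)).
\end{equation*}
Under the injectivity hypothesis on $pr_k$, the map $\mathrm{H}^1(\wedge^{k+1}M_L) \to \mathrm{H}^1(\wedge^{k+1}M_{L'})$ has zero image, so $\mathrm{H}^1(\wedge^{k+1}M_L)$ is forced to be a quotient of $\mathrm{H}^0(\wedge^k M_L(-\Delta))$. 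The whole lemma thus reduces to the vanishing $\mathrm{H}^0(X, \wedge^k M_L(-\Delta)) = 0$.

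Since $\Delta$ is an effective divisor, twisting the inclusion $\mathcal{O}_X(-\Delta) \hookrightarrow \mathcal{O}_X$ by $\wedge^k M_L$ produces an injection $\wedge^k M_L(-\Delta) \hookrightarrow \wedge^k M_L$, so it is enough to establish the cleaner statement $\mathrm{H}^0(X, \wedge^k M_L) = 0$. This group I would identify with the Koszul cohomology $\mathrm{K}_{k,0}(X,L)$: the preceding term $\wedge^{k+1}\mathrm{H}^0(L) \otimes \mathrm{H}^0(L^{-1})$ in the Koszul complex is zero, since $L$ is big and nef on the K3 surface $X$ (so that $\mathrm{H}^0(L^{-1})\simeq \mathrm{H}^2(L)^{\vee} = 0$ by Serre duality). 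Hence $\mathrm{K}_{k,0}(X,L)$ is simply the kernel of the Koszul differential $\wedge^k \mathrm{H}^0(L) \to \wedge^{k-1}\mathrm{H}^0(L) \otimes \mathrm{H}^0(L)$, which is injective for $k \geq 1$ on any vector space by the standard exactness of the Koszul complex $\wedge^\bullet V \otimes \mathrm{Sym}^\bullet V$ in positive degree.

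Stitching these three observations together gives the conclusion. The argument is purely formal; there is no substantive obstacle, as each step is either a direct consequence of the long exact sequence or a standard Koszul fact that does not require the specific geometry of $(X, L, L', \Delta)$ beyond $\mathrm{H}^0(L^{-1}) = 0$.
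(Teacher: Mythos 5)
Your proof is correct and follows essentially the same route as the paper: the long exact sequence reduces everything to $\mathrm{H}^0(\wedge^k M_L(-\Delta)) \subseteq \mathrm{H}^0(\wedge^k M_L)=0$, which is exactly the paper's one-line argument. You merely spell out the standard vanishing $\mathrm{H}^0(\wedge^k M_L)=\mathrm{K}_{k,0}(X,L)=0$ via injectivity of the Koszul differential, which the paper takes for granted.
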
\begin{proof}
Indeed $\mathrm{H}^0(\wedge^k M_L (-\Delta))\seq \mathrm{H}^0(\wedge^k M_L)=0$. 
\end{proof}

 We have a rational resolution of singularities $\mu : X \to \hat{X}$, contracting $\Delta$ to a du Val singularity $p$ on a nodal K3 surface $\hat{X}$. Then $\hat{X}$ admits a line bundle $\hat{L}$ with $\mu^*{\hat{L}}=L'$. We have $\mathrm{Pic}(\hat{X})=\mathrm{Cl}(\hat{X})\simeq \mathrm{Cl}(X\setminus \Delta)$, so  $\mathrm{Cl}(\hat{X})=\mathbb{Z}[\hat{L}]$ and $\hat{X}$ is factorial. Consider the rank two Lazarsfeld--Mukai bundle $\hat{E}$ on $\hat{X}$ induced by a $g^1_{k+2}$ on a general $C \in |\hat{L}|$. Set $E:=\mu^*\hat{E}$, which is a rank two bundle on $X$ induced by a $g^1_{k+2}$ on a general $C \in |L'|$. 
 
\begin{lem} \label{green-pic2}
We have $\mathrm{K}_{k+1,1}(X,L')=0$. Further, there is a natural isomorphism $$\mathrm{Sym}^{k-1}\mathrm{H}^0(X,E) \simeq \mathrm{K}_{k,1}(X,L').$$
\end{lem}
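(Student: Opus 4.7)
The plan is to deduce both statements from Theorem \ref{main-thm} and Corollary \ref{main-cor-nat-iso} applied to the pair $(\hat X, \hat L)$. Since $\mu: X \to \hat X$ is a rational resolution with $L' = \mu^*\hat L$, the projection formula gives $\mathrm{H}^p(X,(L')^{\otimes n}) \simeq \mathrm{H}^p(\hat X, \hat L^{\otimes n})$ for all $p, n \geq 0$, hence $\mathrm{K}_{p,q}(X, L') \simeq \mathrm{K}_{p,q}(\hat X, \hat L)$ for every $p, q$. The polarization $\hat L$ has even genus $2(k+1)$, and $\mathrm{Cl}(\hat X) = \mathbb{Z}[\hat L]$, so $(\hat X, \hat L)$ fits the setup of Sections \ref{sec1} and \ref{sec2} with the integer $k$ there replaced by $k+1$.

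Next, I would verify the local and global properties needed on the nodal surface $\hat X$. The bundle $\hat E$ is locally free: a general smooth $C \in |\hat L|$ avoids the du Val point $p$ (by Lemma \ref{BNP-gen} with $n=0$, the linear system $|\hat L|$ is base-point-free, hence so is $|\hat L|$ on $\hat X$), and the defining sequence $0 \to \hat E^\vee \to \mathrm{H}^0(C,A) \otimes \mathcal{O}_{\hat X} \to i_*A \to 0$ then forces $\hat E$ to be locally free at $p$ as well. For any nonzero $\hat s \in \mathrm{H}^0(\hat X, \hat E)$, the zero scheme $Z(\hat s)$ is zero-dimensional: otherwise $\hat s$ would vanish along an effective divisor, which by $\mathrm{Cl}(\hat X) = \mathbb{Z}[\hat L]$ would be a multiple of $\hat L$, contradicting the numerical invariants of $\hat E$ just as in the remark following Lemma \ref{very-first-lem}. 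Hence the universal zero-locus $\mathcal{Z} \seq \hat X \times \PP(\mathrm{H}^0(\hat E))$ is finite and flat over $\PP(\mathrm{H}^0(\hat E))$ by miracle flatness.

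With these observations in place, the cohomological computations of Section \ref{sec1} (Lemmas \ref{first-iso}, \ref{first-twist-lem}, \ref{second-twist-lem}, Proposition \ref{auto-injectivity}, Theorem \ref{main-thm}) and Section \ref{sec2} (Theorem \ref{real-thm}, Corollary \ref{main-cor-nat-iso}) transfer to $(\hat X, \hat L)$ verbatim, since they use only the K\"unneth formula, the projection formula, local freeness of $\hat E$, and the vanishing $\mathrm{H}^1(\mathcal{O}_{\hat X}) = 0$. Applying Theorem \ref{main-thm} with $k$ replaced by $k+1$ gives $\mathrm{K}_{k+1,1}(\hat X, \hat L) = 0$, and Corollary \ref{main-cor-nat-iso} similarly applied gives $\mathrm{Sym}^{k-1}\mathrm{H}^0(\hat X, \hat E) \simeq \mathrm{K}_{k,1}(\hat X, \hat L)$. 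Combining with $\mathrm{K}_{p,q}(X, L') \simeq \mathrm{K}_{p,q}(\hat X, \hat L)$ and $\mathrm{H}^0(X, E) \simeq \mathrm{H}^0(\hat X, \hat E)$ yields both assertions.

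The main subtlety will be the presence of the du Val singularity on $\hat X$. One might hope to work directly on the smooth $X$ instead, but then the universal zero-locus in $X \times \PP(\mathrm{H}^0(E))$ fails to be finite over $\PP(\mathrm{H}^0(E))$: sections of $\hat E$ vanishing at $p$ pull back to sections of $E$ vanishing along all of $\Delta$, producing a one-dimensional component in their zero locus. Working on the factorial surface $\hat X$ circumvents this, and the singularity itself is harmless because $\hat X$ has rational singularities and $\hat E$ is locally free, so every cohomology calculation used in Sections \ref{sec1}--\ref{sec2} carries over without change.
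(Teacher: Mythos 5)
Your proposal is correct and follows essentially the same route as the paper: transfer the problem to the factorial nodal surface $\hat X$ via the rational resolution (so that $\mathrm{K}_{p,q}(X,L')\simeq \mathrm{K}_{p,q}(\hat X,\hat L)$), check that $\mathrm{Cl}(\hat X)=\mathbb{Z}[\hat L]$ forces $\hat{\mathcal{Z}}\to\PP(\mathrm{H}^0(\hat E))$ to be finite and flat by miracle flatness, and then run the formal arguments of Sections \ref{sec1}--\ref{sec2} with $k$ replaced by $k+1$. Your added remarks on the local freeness of $\hat E$ at the du Val point and on why one cannot work directly on $X$ are accurate elaborations of what the paper leaves implicit.
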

\begin{proof}
Set $\PP=\PP(\mathrm{H}^0(\hat{X}, \hat{E}))$ and let $\hat{\mathcal{Z}} \seq \hat{X} \times \PP$ be the locus defined by $\{ (x,s) \; | \; s(x)=0 \}$. Then $\hat{\mathcal{Z}}$ is a projective bundle over $\hat{X}$, is a local complete intersection in $\hat{X} \times \PP$ and is finite and flat over $\PP$ by ``miracle flatness'', \cite[Prop.\ 6.1.5]{EGA}. Note that, as in Section \ref{sec1}, finiteness of $\hat{\mathcal{Z}}$ over $\PP$ comes from the fact that $\mathrm{Cl}(\hat{X})=\mathbb{Z}[\hat{L}]$ and $h^0(\hat{E}(-\hat{L}))=h^0(\hat{E}^{\vee})=0$. The formal cohomological arguments from Section \ref{sec1} and Section \ref{sec2}, up to Corollary \ref{main-cor-nat-iso}, go through unchanged. Hence we see $\mathrm{K}_{k+1,1}(\hat{X}, \hat{L})=0$, and $\mathrm{Sym}^{k-1}\mathrm{H}^0(\hat{E}) \simeq \mathrm{K}_{k,1}(\hat{X},\hat{L})$. Since $\hat{X}$ has rational singularities, $\mathrm{H}^0(X,nL') \simeq \mathrm{H}^0(\hat{X}, n\hat{L})$ for all $n$, and the claim follows.
\end{proof}

We will prove that we have a natural injection $\mathrm{Sym}^{k-1}\mathrm{H}^0(X,E) \hookrightarrow \mathrm{H}^1(\wedge^k M_L (-\Delta))$. By the above lemma, this implies injectivity of $pr_k$, so that Voisin's Theorem follows. 
\begin{lem}[Voisin, \cite{V2}, Proposition 2] \label{decomposable}
The natural map $d: \wedge^2 \mathrm{H}^0(E) \to \mathrm{H}^0(\wedge^2 E)=\mathrm{H}^0(L')$ does not vanish on decomposable elements.
\end{lem}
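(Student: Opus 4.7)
The plan is to argue by contradiction: assume there exist linearly independent $s, t \in \mathrm{H}^0(X, E)$ with $s \wedge t = 0$ in $\mathrm{H}^0(L') = \mathrm{H}^0(\wedge^2 E)$. Then the morphism $(s, t) : \mathcal{O}_X^{\oplus 2} \to E$ has generic rank one, so its image is contained in a unique rank-one saturated subsheaf $F \subseteq E$. Since $X$ is smooth and $E$ is locally free of rank two, $F$ is automatically a line bundle. Using $\mathrm{Pic}(X) = \mathbb{Z}[L'] \oplus \mathbb{Z}[\Delta]$, I would write $F = \mathcal{O}_X(aL' + b\Delta)$ for some integers $a, b$. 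The linearly independent sections $s, t$ give $\mathrm{h}^0(F) \geq 2$, while the injection $F \hookrightarrow E$ gives $\mathrm{h}^0(F) \leq \mathrm{h}^0(E)$.

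I would then dispatch the easy cases on $(a,b)$. If $a \leq -1$, then $F \cdot L' = a(L')^2 < 0$ forces $F$ to be non-effective (as $L'$ is nef), contradicting $\mathrm{h}^0(F) \geq 1$. If $a = 0$, then $F = \mathcal{O}_X(b\Delta)$ with $\Delta$ a $(-2)$-curve, and an immediate induction using the standard restriction sequences gives $\mathrm{h}^0(F) \leq 1$. If $a \geq 1$ and $b \geq 0$, then the chain $\mathcal{O}_X(L') \hookrightarrow \mathcal{O}_X(aL') \hookrightarrow F$ gives $\mathrm{h}^0(F) \geq \mathrm{h}^0(L')$, which strictly exceeds $\mathrm{h}^0(E)$ in the relevant range of $k$.

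The crux is the remaining case $a \geq 1$, $b \leq -1$, which I would handle by combining Chern-class and Riemann--Roch estimates. Since $F$ is saturated, the quotient is $E/F \cong I_Z \otimes M$ with $M = \mathcal{O}_X((1-a)L' - b\Delta)$ and $Z$ zero-dimensional of length $\ell \geq 0$. Using $L' \cdot \Delta = 0$ and $\Delta^2 = -2$, the Whitney formula $c_2(E) = c_1(F) \cdot c_1(M) + \ell$ expands to
\[ 2b^2 + \ell \;=\; c_2(E) + a(a-1)(L')^2. \]
Meanwhile, $(-F) \cdot L' < 0$ shows $-F$ is not effective, so $\mathrm{h}^2(F) = \mathrm{h}^0(-F) = 0$ by Serre duality, and Riemann--Roch gives the lower bound $\mathrm{h}^0(F) \geq \chi(F) = \tfrac{1}{2} a^2(L')^2 - b^2 + 2$. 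Combining this with $\mathrm{h}^0(F) \leq \mathrm{h}^0(E)$ to bound $b^2$ from below, and eliminating $b^2$ via the Chern-class identity, produces a linear inequality of the form $a(L')^2 \leq c_2(E) + 2\mathrm{h}^0(E) - 4$, which fails for every $a \geq 1$ once $g = g(L')$ is sufficiently large, in particular in the paper's regime $g \geq 9$.

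The main obstacle is this last case: every other $(a,b)$ is immediate from the inclusion $F \hookrightarrow E$, but $a \geq 1$, $b \leq -1$ requires the combined Chern-class/Riemann--Roch bookkeeping above in order to play the negativity of the $b\Delta$ component against the positivity of the $aL'$ component.
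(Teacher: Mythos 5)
Your proof is correct, but it takes a genuinely different route from the paper's at the decisive step. Both arguments start identically: the two sections with vanishing wedge span a rank-one subsheaf whose saturation $F\hookrightarrow E$ is a line bundle with $h^0(F)\geq 2$, and both dispatch $a\leq 0$ by noting that no effective class $aL'+b\Delta$ with $a\leq 0$ moves. The difference is how $a\geq 1$ is excluded. The paper never estimates $h^0(F)$ numerically: it passes to the quotient $E/F\simeq N_2\otimes I_2$, uses global generation of $E$ together with $h^2(E)=0$ to force $h^0(N_2)\geq 2$, and then $\det E=L'=N_1+N_2$ contradicts Lemma \ref{BNP-gen} (the statement that $|L'|$ is not a sum of two pencils), a one-line lattice computation valid for every $k$. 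You instead bound the sub-line-bundle directly: for $b\geq 0$ via $h^0(F)\geq h^0(L')>h^0(E)$, and for $b\leq -1$ by playing $\chi(F)\leq h^0(F)\leq h^0(E)$ (using $h^2(F)=h^0(-F)=0$) against the Whitney formula $c_2(E)=F\cdot(L'-F)+\ell$ with $\ell\geq 0$. Your final inequality $a(L')^2\leq c_2(E)+2h^0(E)-4$ does check out: with $(L')^2=4k+2$, $c_2(E)=k+2$, $h^0(E)=k+3$ (the paper's stated value $(L')^2=2k+2$ is inconsistent with $\PP(\mathrm{H}^0(E))\simeq\PP^{k+2}$ and should be read as $4k+2$), it becomes $a(4k+2)\leq 3k+4$, which fails for all $a\geq 1$ once $k\geq 3$, comfortably within the regime $g=2k+1\geq 9$. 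What you lose relative to the paper is uniformity in $k$ and brevity (you need the precise invariants of $E$ and the largeness of $g$); what you gain is independence from the global generation of $E$ and from the sum-of-two-pencils lemma. Your Chern-class/Riemann--Roch bookkeeping is essentially the stability computation the paper itself uses later to identify $E(-\Delta)$ with a Lazarsfeld--Mukai bundle, so it is in the spirit of the surrounding arguments, just not of the paper's proof of this particular lemma.
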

\begin{proof}
Suppose $v_1, v_2 \in \mathrm{H}^0(E)$ with $v_1 \wedge v_2 \neq 0$, $d(v_1 \wedge v_2)=0$. Then $v_1, v_2$ generate a rank one subsheaf $H_1 \seq E$ with at least two sections. Let $H_2:=E/H_1$ and $M_2:=H_2/\mathrm{T}(H_2)$, where $\mathrm{T}(H_2)$ denotes the torsion subsheaf. We have a short exact sequence $0 \to M_1 \to E \to M_2 \to 0$, with $M_1,M_2$ rank-one, torsion-free sheaves and $h^0(M_1) \geq 2$. Then $M_i=N_i \otimes I_i$, for $N_i$ a line bundle and $I_i$ an ideal sheaf of a $0$-dimensional scheme for $i=1,2$. Since $E$ is globally generated with $h^2(E)=0$, we have $h^0(N_2) \geq 2$. But then $\det(E)=L'=N_1 \otimes N_2$ is a sum of line bundles with at least two sections. We have already seen that this cannot occur.
\end{proof}
To set things up, we need a lemma.
\begin{lem}
The bundle $E(-\Delta)$ is isomorphic to the Lazarsfeld--Mukai bundle $F$ corresponding to a $g^1_{k}$ on a general $C' \in |L-\Delta|$. In particular, $E(-\Delta)$ is globally generated.
\end{lem}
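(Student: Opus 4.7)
The plan is to match invariants and cohomology between $E(-\Delta)$ and the putative Lazarsfeld--Mukai bundle $F$, then identify them by producing a section of $E(-\Delta)$ with zero-dimensional zero locus.

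First I would compute the Chern data: $\det E(-\Delta) = L' - 2\Delta = L - \Delta$ and $c_2(E(-\Delta)) = c_2(E) - c_1(E)\cdot\Delta + \Delta^2 = (k+2) - 0 - 2 = k$. Since $\mu$ contracts $\Delta$ and $E = \mu^*\hat{E}$, the restriction $E|_\Delta$ is trivial of rank two; combined with surjectivity of $\mathrm{H}^0(\hat E) \twoheadrightarrow \hat E \otimes k(p)$ (from global generation of $\hat E$), the exact sequence $0 \to E(-\Delta) \to E \to E|_\Delta \to 0$ gives $h^0(E(-\Delta)) = h^0(E) - 2 = k+1$. Any smooth $C' \in |L-\Delta|$ supports a $g^1_k$ by Brill--Noether, and Riemann--Roch on $C'$ matches $h^0(F_{C',A}) = k+1$ precisely when $h^0(A) = 2$, so the numerical data of $F$ and $E(-\Delta)$ coincide.

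The crucial step is to exhibit a section of $E(-\Delta)$ with $0$-dimensional zero locus. Any section $s$ whose zero scheme contains an effective divisor $D$ factors through a sub-line-bundle $\mathcal{O}_X(D) \hookrightarrow E(-\Delta)$, which twists to $\mathcal{O}_X(D+\Delta) \hookrightarrow E$. Lemma \ref{decomposable} forbids any sub-line-bundle of $E$ with two independent sections: two such $v_1, v_2 \in \mathrm{H}^0(E)$ would give a nonzero decomposable $v_1 \wedge v_2 \in \wedge^2 \mathrm{H}^0(E)$ mapped to zero in $\mathrm{H}^0(L')$. A case analysis on the class $aL'+b\Delta$, using positivity of $L'$ (so $h^0 \geq 2$ whenever $a \geq 1$), forces $D + \Delta \in \mathbb{Z}_{\geq 1}\Delta$, so every section of $E(-\Delta)$ with divisorial zero locus lies in $\mathrm{H}^0(E(-2\Delta))$. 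Now $E(-\Delta)|_\Delta \simeq \mathcal{O}_\Delta(2)^{\oplus 2}$ (since $\Delta^2 = -2$), and under the identification $\mu^*\mathcal{I}_p = \mathcal{O}_X(-\Delta)$ for the $A_1$ resolution, the restriction $\mathrm{H}^0(E(-\Delta)) \to \mathrm{H}^0(E(-\Delta)|_\Delta)$ is nonzero: any $\hat s \in \mathrm{H}^0(\hat E \otimes \mathcal{I}_p) \setminus \mathrm{H}^0(\hat E \otimes \mathcal{I}_p^2)$ produces a nonzero restriction via the linearization at $p$. Therefore $\mathrm{H}^0(E(-2\Delta)) \subsetneq \mathrm{H}^0(E(-\Delta))$, and a generic $s$ has $0$-dimensional $Z(s)$ of length $c_2(E(-\Delta)) = k$.

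Given such an $s$, the defining sequence
$$0 \to \mathcal{O}_X \xrightarrow{s} E(-\Delta) \to I_{Z(s)}(L-\Delta) \to 0$$
exhibits $Z(s)$ as lying on some $C' \in |L-\Delta|$, smooth for generic $s$ by Lemma \ref{BNP-gen} and Bertini. Setting $A := \mathcal{O}_{C'}(Z(s))$, this sequence is the Lazarsfeld--Mukai defining sequence for $(C', A)$, so $E(-\Delta) \simeq F_{C',A}$. The invariant match forces $h^0(A) = 2$, identifying $A$ as a $g^1_k$. The ``in particular'' clause then follows from the standard fact that Lazarsfeld--Mukai bundles attached to base-point-free pencils are globally generated. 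The main obstacle is the production of the section with $0$-dimensional zero locus: transferring Lemma \ref{decomposable} from $E$ to $E(-\Delta)$ requires a careful enumeration of effective classes in $\mathrm{Pic}(X)$, and non-vanishing of the restriction to $\Delta$ hinges on the linearization computation at the $A_1$ singular point $p$.
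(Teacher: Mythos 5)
Your route is genuinely different from the paper's: the paper never produces a section of $E(-\Delta)$ with zero-dimensional zero locus. Instead it shows that $F$ is $\mu$-stable with respect to $L-\Delta$ (a destabilizing sub-line-bundle would exhibit the determinant as a sum of two pencils, contradicting Lemma \ref{BNP-gen}), then shows that $E(-\Delta)$ is simple with the same determinant and Euler characteristic and is likewise stable, and concludes $E(-\Delta)\simeq F$ from the uniqueness of the stable bundle with these (rigid) invariants. That argument sidesteps everything your proposal has to do by hand.

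As written, your proposal has genuine gaps. (i) The key reduction --- that a section of $E(-\Delta)$ vanishing on a divisor must lie in $\mathrm{H}^0(E(-2\Delta))$ --- rests on the claim that $h^0(aL'+b\Delta)\geq 2$ whenever the class is effective with $a\geq 1$, and this is false in general: the lattice $\ZZ L'\oplus\ZZ\Delta$ can contain rigid effective $(-2)$-classes with $a=1$ (e.g.\ whenever $b^2=(L')^2/2+1$ has an integer solution), and for such classes Lemma \ref{decomposable} gives no contradiction. The correct case analysis passes to the quotient $E/\mathcal{O}(D+\Delta)\simeq N'\otimes I_\zeta$, which is effective and globally generated because $E$ is; this forces $D+\Delta\in\ZZ\Delta$, but it is exactly the paper's stability computation, so nothing is saved. (ii) The step from one section $s$ with zero-dimensional $Z(s)$ to $E(-\Delta)\simeq F_{C',A}$ is not automatic: $0\to\mathcal{O}\to E(-\Delta)\to I_{Z(s)}(L-\Delta)\to 0$ is the Serre-construction presentation, not the Lazarsfeld--Mukai defining sequence. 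You need $h^0(\mathcal{O}_{C'}(Z(s)))=2$, which follows from $h^0(I_{Z(s)}(L-\Delta))=k$ together with Brill--Noether generality of $C'$ rather than from an ``invariant match''; you need $C'$ smooth along $Z(s)$, which Bertini alone does not give since $Z(s)$ is in the base locus of the relevant linear system; and you need uniqueness of the locally free extension (or, once more, stability) to identify the two bundles. (iii) The nondegeneracy $\mathrm{H}^0(E(-2\Delta))\subsetneq\mathrm{H}^0(E(-\Delta))$ is asserted via an informal linearization at the $A_1$ point rather than proved. The plan could probably be completed, but each of these steps requires a real argument, and the stability-plus-uniqueness route is both shorter and what the paper actually does.
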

\begin{proof}
We claim that $F$ is $\mu$-stable with respect to $L-\Delta$. Otherwise, we have a filtration
$$0 \to M \to F \to N \otimes I_{\zeta} \to 0$$
where $M, N$ are line bundles, $I_{\zeta}$ is the ideal sheaf of a $0$-dimensional scheme of length $k-(M \cdot N)$, where $h^0(N) \geq 2$ and with $\mu(M) \geq \mu(F)=g-4 \geq \mu(N)$, cf.\ \cite{lelli-chiesa}. We have $h^2(M)=0$ since $\mu(M)>0$ and further $(M)^2=\mu(M)-(M\cdot N)\geq (g-4)-k \geq 0$. Thus $h^0(M) \geq 2$ by Riemann--Roch, contradicting that $L-\Delta=\det(F)$ cannot be written as a sum of two pencils.
\smallskip

The rank two bundle $E(-\Delta)$ is simple with $\det E(-\Delta)=L-\Delta$ and $\chi(E(-\Delta))=k+1$. We claim that $E(-\Delta)$ is stable with respect to $L-\Delta$. Since $F$ is the unique stable bundle with these invariants, $E(-\Delta) \simeq F$. If $E(-\Delta)$ is not stable, choose a filtration $0 \to M' \to E(-\Delta) \to N' \otimes I_{\zeta'} \to 0$
as above. We again have $h^0(M') \geq 2$, and since $N'(\Delta) \otimes I_{\zeta'}$ is a quotient of the globally generated bundle $E$ and $h^2(E)\neq 0$, $h^0(N'(\Delta)) \geq 2$. This contradicts that $L$ cannot be written as a sum of two pencils.
\end{proof}

Following \cite[pg.\ 1177]{V2}, let $N$ denote the rank $k-1$ kernel bundle fitting into the sequence
$$0 \to N \to \mathrm{H}^0(E(-\Delta))\otimes \mathcal{O}_X \to E(-\Delta) \to 0.$$
For any $t \in \mathrm{H}^0(E)$, we have a map $\wedge \, t :\mathrm{H}^0(E(-\Delta)) \to \mathrm{H}^0(\wedge^2 E (-\Delta))=\mathrm{H}^0(L)$, inducing a map $N \to M_L$. This globalizes to a vector bundle morphism
$$r: \; N  \boxtimes \mathcal{O}_{\PP}(-1) \hookrightarrow \mathcal{M} ,$$
on $X \times \PP$, where $\PP:=\PP(\mathrm{H}^0(X,E)) \simeq \PP^{k+2}$, where $p: X \times \PP \to X$, $q: X \times \PP \to \PP$ are the projections and $\mathcal{M}:=p^*M_L$.
\begin{remark}
We may compare the definition of $N$ to the vector bundles defined in Section \ref{sec1}. Let $(X,L)$ be a polarized K3 surface of even genus $g=2k$. Then, for any $t \in \mathrm{H}^0(X,E)$ we have an isomorphism between the secant sheaf $\mathcal{S}'_t$ and
$$N_E:= \mathrm{Ker}(\mathrm{H}^0(E) \otimes \mathcal{O}_X \twoheadrightarrow E),$$
as follows readily from the exact sequence $0 \to \mathcal{O}_X \xrightarrow{t} E \xrightarrow{\wedge t} I_{Z(t)} \otimes L \to 0$. Notice that the isomorphism $\mathcal{S}'_t \simeq N_E$ depends on $t$. 
\end{remark}

From Lemma \ref{decomposable}, $r$ fails to be injective \emph{on fibres} precisely at points in the locus 
$$\mathcal{Z}:=\left \{ (x,s) \in X \times \PP\left(\mathrm{H}^0(E(-\Delta))\right) \; | \; s(x)=0 \right\}.$$ 
Thus $\mathrm{Coker}(r)$ fails to be locally free along $\mathcal{Z} \seq X \times \Lambda$, where $\Lambda:=\PP\left(\mathrm{H}^0(E(-\Delta))\right) \seq \PP$. \\

We rectify the failure of $\mathrm{Coker}(r)$ to be locally free through a standard construction. Define $\pi: B \to X \times \PP$ as the blow-up along the codimension four locus $\mathcal{Z}$ and let $p':=p\circ \pi$, $q':=q \circ \pi$. Let $j: D \hookrightarrow B$ be the exceptional divisor. For any vector bundle $A$ on $X \times \PP$,  \begin{align} \label{mult-D-iso}
\mathrm{H}^j(B,\pi^*A (nD))\simeq \mathrm{H}^j(B,\pi^*A )\end{align} for any $j$ and for $1 \leq n \leq 3$.\smallskip

For $p=(x,t) \in \mathcal{Z}$, the kernel of $r \otimes k(p)$ is isomorphic to $\C\langle t\rangle \seq \mathrm{H}^0(X,E)$. Thus $$\mathrm{Ker}(r_{|_{\mathcal{Z}}})\simeq ({q}^*\mathcal{O}_{\PP}(-2))_{|_{\mathcal{Z}}},$$ where the inclusion $({q}^*\mathcal{O}_{\PP}(-2))_{|_{\mathcal{Z}}} \hookrightarrow (N  \boxtimes \mathcal{O}_{\PP}(-1))_{|_{\mathcal{Z}}}$ is given by the section $u \in \mathrm{H}^0(\mathcal{Z},N  \boxtimes \mathcal{O}_{\PP}(1))\seq \mathrm{H}^0(E) \otimes \mathrm{H}^0({q}^*\mathcal{O}_{\PP}(1)_{|_{\mathcal{Z}}})$ obtained by restricting $id \in \mathrm{H}^0(E) \otimes \mathrm{H}^0({q}^*\mathcal{O}_{\PP}(1))$ to $\mathcal{Z}$.\smallskip

We now perform an \emph{elementary transformation} on  $N  \boxtimes \mathcal{O}_{\PP}(-1)$. Define $S$ as the dual bundle to $\mathrm{Im}({\pi^*r}^{\vee})$. Then $S^{\vee}$ is a vector bundle of rank $k-1$ fitting into the exact sequence
$$0 \to S^{\vee} \to \pi^* (N^{\vee} \boxtimes \mathcal{O}_{\PP}(1)) \to {{q'}^* \mathcal{O}_{\PP}(2)}_{|_D} \to 0.$$ 
From the definition of $S$, we have an exact sequence $$0 \to S \to \pi^* \mathcal{M} \to \Gamma \to 0,$$ where $\Gamma$ is locally free of rank $k+2$. 

The blow-up of a projective space $\PP(V)$ along a subspace $W \seq V$ is a projective bundle over $\PP(V/W)$, \cite[\S 9.3.2]{3264}. Thus $B$ is a projective bundle $\PP(\mathcal{H})$ over $\PP(\mathcal{F})$, where $\mathcal{F}:=\mathrm{Coker}(N \hookrightarrow \mathrm{H}^0(E) \otimes \mathcal{O}_X)$. 
The projection morphism $\displaystyle{\chi: B \to \PP(\mathcal{F})}$ is defined over $X$ with $$\chi^* \mathcal{O}_{\PP(\mathcal{F})}(1) \simeq {q'}^*\mathcal{O}_{\PP}(1)(-D).$$ To describe $\mathcal{H}$, let $f: \PP(\mathcal{F}) \to X$ be the projection and define $\mathcal{P}$ by the exact sequence $0 \to \mathcal{P}^{\vee} \to f^*f_* \mathcal{O}_{\PP(\mathcal{F})}(1) \to \mathcal{O}_{\PP(\mathcal{F})}(1) \to 0.$ We have a surjection $\mathrm{H}^0(E) \otimes \mathcal{O}_{\PP(\mathcal{F})} \to \mathcal{P}$. The rank $k$ bundle $\mathcal{H}$ is defined by the exact sequence
$$0 \to \mathcal{H} \to \mathrm{H}^0(E) \otimes \mathcal{O}_{\PP(\mathcal{F})}\to \mathcal{P} \to 0.$$
We have a short exact sequence
$$ 0 \to f^* N \to \mathcal{H} \to \mathcal{O}_{\PP(\mathcal{F})}(-1) \to 0.$$
The isomorphism $B \simeq \PP(\mathcal{H})$ gives an identification $\mathcal{O}_{\PP(\mathcal{H})}(1) \simeq {q'}^*\mathcal{O}_{\PP}(1)$, \cite[Prop.\ 9.11]{3264}.
In summary, we have the following commutative diagram on $\PP(\mathcal{F})$:
{\small{$$\begin{tikzcd}
& 0 \arrow[d] & 0 \arrow[d]  &  \\
0 \arrow[r] & f^* N \arrow[r, "="] \arrow[d] & f^* N   \arrow[d] &   &\\
0 \arrow[r] & \mathcal{H}  \arrow[r] \arrow[d] & \mathrm{H}^0(E) \otimes \mathcal{O}_{\PP(\mathcal{F})}  \arrow[r] \arrow[d]  &  \mathcal{P} \ \arrow[r] \arrow[d, "="]& 0\\
0 \arrow[r] & \mathcal{O}_{\PP(\mathcal{F})}(-1)  \arrow[r] \arrow[d] & f^*\mathcal{F}=f^*(f_* \mathcal{O}_{\PP(\mathcal{F})}(1))^{\vee}  \arrow[r] \arrow[d]  &  \mathcal{P}  \arrow[r]  & 0\\
& 0 & 0 & &
\end{tikzcd},$$}}
Note that we are using the geometric notation for projective bundles, i.e.\ $\PP(\mathcal{F})=\mathrm{Proj}(\mathcal{F}^{\vee})$, and $\mathcal{F}^{\vee}=f_*\mathcal{O}_{\PP(\mathcal{F})}(1)$.
\begin{lem}
We have  $S \simeq T_{\chi} \otimes {q'}^*\mathcal{O}_{\PP}(-2)$, where $T_{\chi}$ is the relative tangent bundle.
\end{lem}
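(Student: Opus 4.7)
The plan is to dualize the claim and show $S^{\vee} \simeq T_{\chi}^{\vee} \otimes {q'}^*\mathcal{O}_{\PP}(2)$ by realizing both sides as the same subbundle of $\chi^*\mathcal{H}^{\vee} \otimes {q'}^*\mathcal{O}_{\PP}(1)$ coming from two natural exact sequences. Throughout, I write $\mathcal{O}(n) := {q'}^*\mathcal{O}_{\PP}(n)$ for brevity.

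First, I would write the relative Euler sequence for the projective bundle $\chi: B = \PP(\mathcal{H}) \to \PP(\mathcal{F})$. Using the identification $\mathcal{O}_{\PP(\mathcal{H})}(1) \simeq \mathcal{O}(1)$, dualizing and twisting by $\mathcal{O}(2)$ yields
$$0 \to T_{\chi}^{\vee} \otimes \mathcal{O}(2) \to \chi^*\mathcal{H}^{\vee} \otimes \mathcal{O}(1) \to \mathcal{O}(2) \to 0, \qquad (I)$$
where the surjection is dual (up to twist) to the tautological inclusion $\mathcal{O}_B(-1) \hookrightarrow \chi^*\mathcal{H}$. Next, pull back the sequence $0 \to f^*N \to \mathcal{H} \to \mathcal{O}_{\PP(\mathcal{F})}(-1) \to 0$ along $\chi$, using $f\circ\chi = p'$ and the standard blow-up identity $\chi^*\mathcal{O}_{\PP(\mathcal{F})}(1) \simeq \mathcal{O}(1) \otimes \mathcal{O}_B(-D)$; dualizing and twisting by $\mathcal{O}(1)$ produces
$$0 \to \mathcal{O}(2) \otimes \mathcal{O}_B(-D) \to \chi^*\mathcal{H}^{\vee} \otimes \mathcal{O}(1) \to \pi^*(N^{\vee} \boxtimes \mathcal{O}_{\PP}(1)) \to 0. \qquad (II)$$

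The central claim is that the composition $\mathcal{O}(2) \otimes \mathcal{O}_B(-D) \hookrightarrow \chi^*\mathcal{H}^{\vee}(1) \twoheadrightarrow \mathcal{O}(2)$ is, up to a scalar, multiplication by the defining section $\sigma \in \mathrm{H}^0(\mathcal{O}_B(D))$ of the exceptional divisor. Dually, one checks that the composition $\mathcal{O}_B(-1) \hookrightarrow \chi^*\mathcal{H} \twoheadrightarrow \chi^*\mathcal{O}_{\PP(\mathcal{F})}(-1) \simeq \mathcal{O}(-1) \otimes \mathcal{O}_B(D)$ is a section of $\mathcal{O}_B(D)$ whose zero locus is exactly the set where the tautological line in $\chi^*\mathcal{H}$ is contained in $f^*N = \ker(\chi^*\mathcal{H} \to \chi^*\mathcal{O}_{\PP(\mathcal{F})}(-1))$; compatibility of the two descriptions of $B$ (as the blow-up of $X \times \PP$ along $\mathcal{Z}$ and as $\PP(\mathcal{H})$) identifies this zero locus with $D$. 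Consequently $(T_{\chi}^{\vee} \otimes \mathcal{O}(2)) \cap (\mathcal{O}(2) \otimes \mathcal{O}_B(-D)) = 0$ inside $\chi^*\mathcal{H}^{\vee}(1)$, and a nine-lemma diagram chase applied to $(I)$ and $(II)$ yields the exact sequence
$$0 \to T_{\chi}^{\vee} \otimes \mathcal{O}(2) \to \pi^*(N^{\vee} \boxtimes \mathcal{O}_{\PP}(1)) \to \mathcal{O}(2)_{|D} \to 0. \qquad (III)$$

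Comparing $(III)$ with the defining sequence $0 \to S^{\vee} \to \pi^*(N^{\vee}\boxtimes\mathcal{O}_{\PP}(1)) \to \mathcal{O}(2)_{|D} \to 0$ of $S^{\vee}$ then gives $S^{\vee} \simeq T_{\chi}^{\vee}\otimes\mathcal{O}(2)$; dualizing gives $S \simeq T_{\chi}\otimes{q'}^*\mathcal{O}_{\PP}(-2)$, as claimed. The main technical obstacle I anticipate is verifying that the surjection $\pi^*(N^{\vee}\boxtimes\mathcal{O}_{\PP}(1)) \twoheadrightarrow \mathcal{O}(2)_{|D}$ arising from $(III)$ coincides, up to scalar, with the surjection defining $S^{\vee}$, so that $T_{\chi}^{\vee}\otimes\mathcal{O}(2)$ and $S^{\vee}$ are identified as subsheaves of $\pi^*(N^{\vee}\boxtimes\mathcal{O}_{\PP}(1))$ rather than merely as abstract vector bundles of the same rank. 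This should be achievable by showing that $\mathrm{Hom}(\pi^*(N^{\vee}\boxtimes\mathcal{O}_{\PP}(1)), \mathcal{O}(2)_{|D})$ contains a unique $\C^{\times}$-orbit of surjections, or by tracing $\pi^*r^{\vee}$ through the constructions above.
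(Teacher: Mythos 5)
Your proof is correct and follows essentially the same route as the paper: there one twists the relative Euler sequence for $\chi$ by ${q'}^*\mathcal{O}_{\PP}(-2)$, composes with $\pi^*(N\boxtimes\mathcal{O}_{\PP}(-1))\to\chi^*\mathcal{H}\otimes{q'}^*\mathcal{O}_{\PP}(-1)$, and dualizes to arrive at precisely your sequence $(III)$, which is then compared with the defining sequence of $S^{\vee}$ exactly as you do. The compatibility you flag as the main technical obstacle (that the resulting surjection onto ${{q'}^*\mathcal{O}_{\PP}(2)}_{|_D}$ agrees up to scalar with the one defining $S^{\vee}$) is the same point the paper handles implicitly, by tracing the construction back through $\pi^*r^{\vee}$ as you suggest.
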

\begin{proof}
We keep track of the maps defined above in the commutative diagram
{\small{$$\begin{tikzcd}
B \arrow[rr, "\chi"] \arrow[dr,"p \circ \pi"'] & & \PP(\mathcal{F}) \arrow[dl, "f"] \\
& X &
\end{tikzcd}.$$}}
We have the relative Euler sequence $0 \to \mathcal{O}_{\PP(\mathcal{H})} \to \mathcal{O}_{\PP(\mathcal{H})}(1) \otimes \chi^* \mathcal{H} \xrightarrow{\alpha} T_{\chi} \to 0.$
Twisting by $\mathcal{O}_{\PP(\mathcal{H})}(-2)$, we have the composite map
$\pi^*(N \boxtimes \mathcal{O}_{\PP}(-1)) \to \chi^*\mathcal{H} \otimes {q'}^*\mathcal{O}_{\PP}(-1)\xrightarrow{\alpha} T_{\chi}(-2).$
Dualizing, we obtain an exact sequence
$$0 \to \Omega_{\chi}\otimes {q'}^*\mathcal{O}_{\PP}(2) \to \pi^*(N^{\vee} \boxtimes \mathcal{O}_{\PP}(1)) \to {{q'}^*\mathcal{O}_{\PP}(2)}_{|_D} \to 0,$$
and comparison with the defining sequence for $S^{\vee}$ gives $S^{\vee} \simeq  \Omega_{\chi}\otimes {q'}^*\mathcal{O}_{\PP}(2)$.
\end{proof}

We will need the following computation:
\begin{lem} \label{det-gamma}
We have $\det \Gamma \simeq {q'}^* \mathcal{O}_{\PP}(k-1)(-D-{p'}^*\Delta)$. In particular, we have a natural isomorphism $\mathrm{H}^2(B, \wedge^{k+2} \Gamma ({p'}^*\Delta)(D))\simeq \mathrm{Sym}^{k-1}\mathrm{H}^0(X,E)^{\vee}$.
\end{lem}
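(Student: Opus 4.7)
The plan is to prove this lemma in two stages: first, compute $\det \Gamma$ via the short exact sequence $0 \to S \to \pi^* \mathcal{M} \to \Gamma \to 0$, from which $\det \Gamma \simeq \det(\pi^*\mathcal{M}) \otimes (\det S)^{-1}$. Since $\Gamma$ has rank $k+2$, the identity $\wedge^{k+2}\Gamma \simeq \det\Gamma$ then reduces the cohomological claim to computing $\mathrm{H}^2$ of an explicit line bundle on $B$. For the easier determinant, $\det M_L \simeq L^{-1}$ from the defining sequence of $M_L$, so $\det \pi^*\mathcal{M} \simeq {p'}^*L^{-1}$.

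The main bookkeeping is in computing $\det S$. By the preceding lemma, $S \simeq T_\chi \otimes {q'}^*\mathcal{O}_\PP(-2)$, with $T_\chi$ of rank $k-1$. The relative Euler sequence for $\chi: B \simeq \PP(\mathcal{H}) \to \PP(\mathcal{F})$ gives $\det T_\chi \simeq \mathcal{O}_{\PP(\mathcal{H})}(k) \otimes \chi^* \det \mathcal{H}$. The sequence $0 \to f^*N \to \mathcal{H} \to \mathcal{O}_{\PP(\mathcal{F})}(-1) \to 0$ yields $\det \mathcal{H} \simeq f^*\det N \otimes \mathcal{O}_{\PP(\mathcal{F})}(-1)$, and the defining sequence of $N$ gives $\det N \simeq \det(E(-\Delta))^{-1} \simeq \mathcal{O}_X(-L' + 2\Delta) \simeq \mathcal{O}_X(\Delta - L)$, where I use $\det E = L'$ and $L' = L + \Delta$. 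Combining these with the identifications $\mathcal{O}_{\PP(\mathcal{H})}(1) \simeq {q'}^*\mathcal{O}_\PP(1)$ and $\chi^*\mathcal{O}_{\PP(\mathcal{F})}(1) \simeq {q'}^*\mathcal{O}_\PP(1)(-D)$ already established in the paper, I arrive at $\det S \simeq {q'}^*\mathcal{O}_\PP(-(k-1)) \otimes {p'}^*\mathcal{O}_X(\Delta - L) \otimes \mathcal{O}_B(D)$. Inverting and multiplying by $\det \pi^*\mathcal{M}$, the $L$-twists on the $X$-factor cancel, producing $\det \Gamma \simeq {q'}^*\mathcal{O}_\PP(k-1)(-D - {p'}^*\Delta)$, as claimed.

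For the second part, $\wedge^{k+2}\Gamma({p'}^*\Delta)(D) \simeq \det\Gamma \otimes {p'}^*\mathcal{O}_X(\Delta) \otimes \mathcal{O}_B(D) \simeq {q'}^*\mathcal{O}_\PP(k-1)$, so it remains to compute $\mathrm{H}^2(B, {q'}^*\mathcal{O}_\PP(k-1))$. The projection formula together with $\pi_*\mathcal{O}_B \simeq \mathcal{O}_{X \times \PP}$ and $R^j\pi_*\mathcal{O}_B = 0$ for $j > 0$ identifies this with $\mathrm{H}^2(X \times \PP, q^*\mathcal{O}_\PP(k-1))$. The K\"unneth formula reduces this to $\bigoplus_{a+b=2} \mathrm{H}^a(\mathcal{O}_X) \otimes \mathrm{H}^b(\mathcal{O}_\PP(k-1))$; since $\mathrm{H}^1(\mathcal{O}_X) = 0$ and $\mathrm{H}^i(\mathcal{O}_\PP(k-1)) = 0$ for $i > 0$ (as $k-1 \geq 0$ and $k+2 > 2$), only the term $\mathrm{H}^2(\mathcal{O}_X) \otimes \mathrm{H}^0(\mathcal{O}_\PP(k-1))$ survives, yielding the required natural isomorphism with $\mathrm{Sym}^{k-1}\mathrm{H}^0(X,E)^{\vee}$.

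The main obstacle is essentially bookkeeping: the argument is a chain of routine determinant computations, but one must carefully track compatibility of the various twists by $D$, by ${p'}^*\Delta$, and by powers of ${q'}^*\mathcal{O}_\PP(1)$ through a tower of projective bundle structures. No further geometric input is required beyond the identification $S \simeq T_\chi \otimes {q'}^*\mathcal{O}_\PP(-2)$ from the preceding lemma.
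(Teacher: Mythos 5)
Your proposal is correct and follows essentially the same route as the paper: the paper's proof is the one-line observation that $\det \Gamma \simeq {p'}^*(L^{\vee}) \otimes \det S^{\vee}$ followed by the assertion of $\det S^{\vee}$ and the K\"unneth computation of $\mathrm{H}^2(B,{q'}^*\mathcal{O}_{\PP}(k-1))$. You simply make explicit the determinant bookkeeping for $S$ (via $S \simeq T_{\chi}\otimes {q'}^*\mathcal{O}_{\PP}(-2)$, the relative Euler sequence, and the sequences defining $\mathcal{H}$ and $N$), and your intermediate identities and final answer all check out.
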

\begin{proof}
We have $\det \Gamma \simeq {p'}^*(L^{\vee}) \otimes \det S^{\vee}$ so $\det \Gamma \simeq {q'}^* \mathcal{O}_{\PP}(k-1)(-D-{p'}^*\Delta)$ and $\wedge^{k+2} \Gamma ({p'}^*\Delta)\simeq {q'}^* \mathcal{O}_{\PP}(k-1)(-D)$. Thus $\mathrm{H}^2(B, \wedge^{k+2} \Gamma ({p'}^*\Delta)(D))\simeq \mathrm{H}^2({q'}^* \mathcal{O}_{\PP}(k-1))\simeq \mathrm{H}^2(B,{q'}^* \mathcal{O}_{\PP}(k-1)) \simeq \mathrm{Sym}^{k-1}\mathrm{H}^0(X,E)^{\vee}$.
\end{proof}
\smallskip

We have natural isomorphisms $$\mathrm{K}_{k-1,1}(X,-\Delta, L)\simeq \mathrm{H}^1(\wedge^k M_L(-\Delta))\simeq \mathrm{H}^0(\wedge^{k-1}M_L (L-\Delta)),$$
see \cite{lazarsfeld-VBT} or \cite[\S 3]{ein-lazarsfeld-asymptotic}. We have $\mathrm{H}^0(\wedge^{k-1}M_L (L-\Delta))^{\vee} \simeq \mathrm{H}^2(\wedge^{k+2}M_L(\Delta))$, since $\wedge^{k-1}M_L^{\vee} \simeq \wedge^{k+2}M_L(L)$.\smallskip


Taking exterior powers of $0 \to S \to \pi^* \mathcal{M} \to \Gamma \to 0$ and twisting by $\mathcal{O}_{B}(D+{p'}^*\Delta)$
induces
$$\phi \; : \; \mathrm{H}^2(\wedge^{k+2}M_L(\Delta)) \to \mathrm{H}^2(\wedge^{k+2} \Gamma ({p'}^*\Delta)(D))\simeq \mathrm{Sym}^{k-1}\mathrm{H}^0(E)^{\vee}.$$
We will show that $\phi$ is surjective. We write $\mathcal{O}_B(j)$ for ${q'}^*\mathcal{O}_{\PP}(j) $. Consider the exact sequence
{\small{$$\ldots \to \bigwedge^{k+1}\pi^* \mathcal{M} \otimes S({p'}^*\Delta)(D)\to  \bigwedge^{k+2}\pi^* \mathcal{M} ({p'}^*\Delta)(D) \to \bigwedge^{k+2} \Gamma ({p'}^*\Delta)(D) \to 0.$$}}
The following proposition should be compared to \cite[Lemma 6]{V2}.
\begin{prop} \label{most-of-van}
We have $\mathrm{H}^{2+i}(\bigwedge^{k+2-i} \pi^* \mathcal{M} \otimes \mathrm{Sym}^i(S) ({p'}^*\Delta)(D))=0$ for $1 \leq i \leq k+2$, $i \neq k, k+1$.
\end{prop}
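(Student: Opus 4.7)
The plan is to mirror the strategy of Theorem \ref{main-thm}: express $\mathrm{Sym}^i S$ via short exact sequences whose outer terms pull back from $X \times \PP$ (modulo twists by the exceptional divisor $D$), push down through the blow-up $\pi$, and then apply K\"unneth. The first step exploits the isomorphism $S \simeq T_\chi \otimes {q'}^*\mathcal{O}_{\PP}(-2)$ from the lemma preceding Lemma \ref{det-gamma}. The relative Euler sequence for $\chi : B \simeq \PP(\mathcal{H}) \to \PP(\mathcal{F})$, together with $\mathcal{O}_{\PP(\mathcal{H})}(1) \simeq {q'}^*\mathcal{O}_{\PP}(1)$, gives --- after taking symmetric powers via the second ABW formula and twisting by ${q'}^*\mathcal{O}_{\PP}(-2i)$ --- the short exact sequence
$$0 \to \chi^*\mathrm{Sym}^{i-1}\mathcal{H}^\vee \otimes {q'}^*\mathcal{O}_{\PP}(-i-1) \to \chi^*\mathrm{Sym}^{i}\mathcal{H}^\vee \otimes {q'}^*\mathcal{O}_{\PP}(-i) \to \mathrm{Sym}^i S \to 0.$$
Tensoring with $\bigwedge^{k+2-i}\pi^*\mathcal{M} \otimes \mathcal{O}_B({p'}^*\Delta + D)$, the desired vanishing splits into two sibling vanishings with $\chi^*\mathrm{Sym}^j\mathcal{H}^\vee$ ($j = i-1, i$) in place of $\mathrm{Sym}^i S$.

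Next I would filter $\chi^*\mathrm{Sym}^j\mathcal{H}^\vee$ using $0 \to \mathcal{O}_{\PP(\mathcal{F})}(1) \to \mathcal{H}^\vee \to f^*N^\vee \to 0$, which, via the ABW formula and the identification $\chi^*\mathcal{O}_{\PP(\mathcal{F})}(1) \simeq {q'}^*\mathcal{O}_{\PP}(1)(-D)$, gives
$$0 \to \chi^*\mathrm{Sym}^{j-1}\mathcal{H}^\vee \otimes {q'}^*\mathcal{O}_{\PP}(1)(-D) \to \chi^*\mathrm{Sym}^j\mathcal{H}^\vee \to {p'}^*\mathrm{Sym}^j N^\vee \to 0.$$
Proceeding inductively on $j$ via the long exact sequence of cohomology, and using the identity $\mathrm{H}^{\bullet}(B, \pi^*A(nD)) \simeq \mathrm{H}^{\bullet}(X\times \PP, A)$ for $1 \leq n \leq 3$ together with $\pi_*\mathcal{O}_B(-D)= I_\mathcal{Z}$ and the vanishing of the higher direct images in the relevant range, this brings the problem back to cohomology on $X \times \PP$ of sheaves of the shape
$$\bigwedge^{k+2-i} M_L(\Delta) \otimes \mathrm{Sym}^\ell N^\vee \,\boxtimes\, \mathcal{O}_{\PP}(-a),$$
possibly tensored with a factor of $I_\mathcal{Z}$ (which, if present, is resolved by the sequence $0 \to L^{-1}\boxtimes\mathcal{O}_{\PP}(-2) \to E(L^{-1})\boxtimes \mathcal{O}_{\PP}(-1) \to I_\mathcal{Z} \to 0$ already in play in this section).

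K\"unneth then separates the two factors. The $\PP$-cohomology is controlled by Bott vanishing, while the $X$-cohomology reduces, via the defining sequences
$$0 \to N \to \mathrm{H}^0(E(-\Delta))\otimes \mathcal{O}_X \to E(-\Delta) \to 0, \quad 0 \to M_L \to \mathrm{H}^0(L)\otimes \mathcal{O}_X \to L \to 0,$$
together with the standard K3 vanishings ($\mathrm{H}^1(\mathcal{O}_X) = 0$ and Kodaira-type vanishings for $L$ and $L(-\Delta)$), to explicit verifications.

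The main obstacle is bookkeeping: the iterated filtration of $\chi^*\mathrm{Sym}^j\mathcal{H}^\vee$ creates many pieces, and one must carefully track which powers of $D$ appear so that they remain in the range where the simple push-forward identities above apply; pieces straying outside that range require the full Leray spectral sequence for $\pi$ together with the higher direct images $R^a\pi_*\mathcal{O}_B(-nD)$. The hypothesis $1 \leq i \leq k+2$, $i \neq k, k+1$, is tight: at the two excluded values the top cohomology $\mathrm{H}^2(\omega_X) \simeq \C$ couples with a top-degree class on $\PP$ to produce a genuinely non-trivial K\"unneth contribution, which is precisely the one that, after dualising, underlies the surjection onto $\mathrm{Sym}^{k-1}\mathrm{H}^0(E)^\vee$ needed by the subsequent odd-genus argument.
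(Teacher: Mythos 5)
Your first step coincides with the paper's: the relative Euler sequence for $\chi$ yields $0 \to \chi^*\mathrm{Sym}^{i-1}\mathcal{H}\otimes{q'}^*\mathcal{O}_{\PP}(-i-1) \to \chi^*\mathrm{Sym}^{i}\mathcal{H}\otimes{q'}^*\mathcal{O}_{\PP}(-i) \to \mathrm{Sym}^i(S) \to 0$ (with $\mathcal{H}$ rather than your $\mathcal{H}^{\vee}$, in the paper's geometric convention $\PP(\mathcal{H})=\mathrm{Proj}(\mathcal{H}^{\vee})$). After that the routes diverge, and yours has a genuine gap. The paper does \emph{not} filter $\mathcal{H}$ further, nor does it push down along $\pi$ to $X\times\PP$: it pushes down along $\chi$. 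Since $\bigwedge^{k+2-i}\pi^*\mathcal{M}({p'}^*\Delta)\otimes\chi^*\mathrm{Sym}^{j}\mathcal{H}$ is pulled back from $\PP(\mathcal{F})$, the projection formula reduces everything to $R^{\bullet}\chi_*\bigl({q'}^*\mathcal{O}_{\PP}(-a)(D)\bigr)$; because ${q'}^*\mathcal{O}_{\PP}(1)\simeq\mathcal{O}_{\PP(\mathcal{H})}(1)$ and $D$ has degree one on the $\PP^{k-1}$-fibres of $\chi$, this line bundle restricts to $\mathcal{O}(-a+1)$ on fibres and is fibrewise acyclic whenever $-(k-1)\le -a+1\le -1$. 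That single observation kills all cases $2\le i\le k-1$ at once, with no K\"unneth argument needed. The two endpoints then get separate ad hoc treatments: for $i=1$, Serre duality on $B$ together with $0\to\mathcal{O}_B(1)(-D)\to\chi^*\mathcal{H}^{\vee}\to{p'}^*N^{\vee}\to 0$ and K\"unneth; for $i=k+2$, the comparison of $\mathrm{Sym}^{k+2}(S)({p'}^*\Delta)$ with $\pi^*(\mathrm{Sym}^{k+2}N(\Delta)\boxtimes\mathcal{O}_{\PP}(-k-2))$ off $D$, using $\dim D=k+3<\dim B$. Your uniform plan supplies neither of these.

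The concrete failure point in your route is the iterated filtration $0\to\chi^*\mathrm{Sym}^{j-1}\mathcal{H}^{\vee}\otimes{q'}^*\mathcal{O}_{\PP}(1)(-D)\to\chi^*\mathrm{Sym}^j\mathcal{H}^{\vee}\to{p'}^*\mathrm{Sym}^jN^{\vee}\to 0$ followed by pushdown along $\pi$. Its graded pieces have the form $\pi^*A\otimes\mathcal{O}_B(nD)$ with $n$ running down to about $-(k+1)$, far outside the window $0\le n\le 3$ in which $R\pi_*\mathcal{O}_B(nD)$ is trivial (the codimension of $\mathcal{Z}$ is four). For $n\le -4$ one has $R^{3}\pi_*\mathcal{O}_B(nD)\ne 0$, supported on $\mathcal{Z}$; the corresponding contributions neither vanish for degree reasons nor decompose via K\"unneth on $X\times\PP$, so calling this ``bookkeeping'' to be absorbed by ``the full Leray spectral sequence'' is where the argument actually breaks --- those terms are exactly what the detour through $\PP(\mathcal{F})$ is designed to avoid. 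A smaller point: your closing explanation of why $i=k,k+1$ are excluded is not accurate --- the paper later proves the vanishing \emph{does} hold for $i=k+1$ (Proposition \ref{green-odd-main}), by a harder argument; only at $i=k$ is the vanishing replaced by a surjectivity statement.
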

\begin{proof}
The Euler sequence $0 \to \mathcal{O}_B(-2) \to \chi^* \mathcal{H} (-1)  \to S \to 0$
gives exact sequences
$$0 \to \chi^* \mathrm{Sym}^{i-1} \mathcal{H} (-i-1) \to \chi^*\mathrm{Sym}^{i} \mathcal{H} (-i) \to \mathrm{Sym}^i(S) \to 0.$$
 We first claim
  $$\mathrm{H}^{2+i}(\bigwedge^{k+2-i} \pi^* \mathcal{M} \otimes \chi^*\mathrm{Sym}^{i} \mathcal{H} (-i)({p'}^*\Delta)(D))=0, \; \; \text{for $1 \leq i \leq k-1$}.$$
 The fibres of $\chi$ are projective spaces of dimension $k-1$ and $D$ has degree one on these fibres. Hence $R^j \chi_*\mathcal{O}_B(-i)(D)=0$ for all $j$ and $2 \leq i \leq k-1$. So the claim holds for $2 \leq i \leq k-1$ by the Leray spectral sequence. \smallskip 
 
 For $i=1$, the claim states $\mathrm{H}^3(\bigwedge^{k+1} \pi^* \mathcal{M} \otimes \chi^*\mathcal{H} (-1)({p'}^*\Delta)(D))=0$. We have $\omega_B \simeq {q'}^* \omega_{\PP}(3D)$, so that this is equivalent to  
$$\mathrm{H}^{k+1}(\bigwedge^{k+1} \pi^* \mathcal{M}^{\vee} \otimes \chi^*\mathcal{H}^{\vee}\otimes {q'}^*(\omega_{\PP}(1)) (-{p'}^*\Delta)(2D))=0$$
The claim now follows from the exact sequence $ 0 \to \mathcal{O}_{B}(1)(-D) \to \chi^* \mathcal{H}^{\vee} \to {p'}^*N^{\vee} \to 0$ since
{\small{\begin{align*}
&\mathrm{H}^{k+1}(B, \bigwedge^{k+1} \pi^* \mathcal{M}^{\vee} \otimes {q'}^*(\omega_{\PP}(2)) (-{p'}^*\Delta)(D)) \simeq \mathrm{H}^{k+1}(X\times \PP, \bigwedge^{k+1} M_L^{\vee}(-\Delta)\boxtimes \omega_{\PP}(2))=0, \; \; \text{and} \\
&\mathrm{H}^{k+1}(B,\bigwedge^{k+1} \pi^* \mathcal{M}^{\vee} \otimes {p'}^*N^{\vee} \otimes {q'}^*(\omega_{\PP}(1)) (-{p'}^*\Delta)(2D))  \simeq \mathrm{H}^{k+1}(X \times \PP, \bigwedge^{k+1} M_L^{\vee}\otimes N^{\vee}(-\Delta) \boxtimes \omega_{\PP}(1)),
\end{align*}}}
by the K\"unneth formula.
\smallskip

To finish the proof in the case $1 \leq i \leq k-1$,  it suffices to note that
 $$\mathrm{H}^{3+i}(\bigwedge^{k+2-i} \pi^* \mathcal{M} \otimes \chi^*\mathrm{Sym}^{i-1} \mathcal{H} (-i-1)({p'}^*\Delta)(D))=0, \; \; \text{for $1 \leq i \leq k-1$},$$ is immediate as above. We are now left with the case $i=k+2$. The inclusion $$\pi^*( \mathrm{Sym}^{k+2}N(\Delta) \boxtimes \mathcal{O}_{\PP}(-(k+2))) \hookrightarrow \mathrm{Sym}^{k+2}(S)({p'}^*\Delta)$$ is an isomorphism off $D$. Since $\dim D=k+3$, it suffices to show $$\mathrm{H}^{k+4}(X \times \PP^{k+2}, \mathrm{Sym}^{k+2}N(\Delta) \boxtimes \mathcal{O}_{\PP}(-(k+2)))=0.$$ This follows from the K\"unneth formula. \end{proof}

We next show that we continue to have the above vanishing in the case $i=k+1$. The following lemma, stated in \cite{V1}, Proof of Prop.\ 6, was explained to us by C.\ Voisin.
\begin{lem} \label{LM-mult-map}
The multiplication map $\mathrm{H}^0(X,E(-\Delta)) \otimes \mathrm{H}^0(X,L-\Delta) \to \mathrm{H}^0(X,E(L-2\Delta))$ is surjective.
\end{lem}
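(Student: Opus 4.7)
The plan is to reformulate the required surjectivity as a cohomology vanishing, and reduce, via the dual Lazarsfeld--Mukai sequence, to a multiplication statement on the curve $C'$. Set $F := E(-\Delta)$, $L_2 := L - \Delta$, and let $M_{L_2}$ be the kernel bundle defined by $0 \to M_{L_2} \to \mathrm{H}^0(L_2) \otimes \mathcal{O}_X \to L_2 \to 0$. The surjectivity is equivalent, upon tensoring this sequence with $F$ and using $\mathrm{H}^1(F) = 0$, to the vanishing $\mathrm{H}^1(X, F \otimes M_{L_2}) = 0$.

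I would then exploit the fact that, by the preceding lemma, $F$ is the Lazarsfeld--Mukai bundle for a pencil $A = g^1_k$ on a smooth curve $C' \in |L_2|$. The dual Lazarsfeld--Mukai sequence gives
\[0 \to \mathrm{H}^0(A)^{\vee} \otimes \mathcal{O}_X \to F \to i_*(A^{-1} \otimes \omega_{C'}) \to 0,\]
where $i: C' \hookrightarrow X$ and we use $L_2|_{C'} = \omega_{C'}$ by adjunction (together with $C' \cdot \Delta = 0$). Twisting by $L_2$ and applying global sections (using $\mathrm{H}^1(L_2) = 0$) produces a commutative diagram comparing the given multiplication map with one on $A^{-1} \otimes \omega_{C'}$. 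By the five-lemma, the desired surjectivity reduces to the surjectivity of
\[\mathrm{H}^0(A^{-1} \otimes \omega_{C'}) \otimes \mathrm{H}^0(L_2) \to \mathrm{H}^0(A^{-1} \otimes \omega_{C'}^{\otimes 2}),\]
which in turn factors through the natural surjection $\mathrm{H}^0(L_2) \twoheadrightarrow \mathrm{H}^0(\omega_{C'})$ (coming from $0 \to \mathcal{O}_X \to L_2 \to \omega_{C'} \to 0$ and $\mathrm{H}^1(\mathcal{O}_X) = 0$). Thus it suffices to prove the multiplication map
\[\mathrm{H}^0(C', A^{-1} \otimes \omega_{C'}) \otimes \mathrm{H}^0(C', \omega_{C'}) \to \mathrm{H}^0(C', A^{-1} \otimes \omega_{C'}^{\otimes 2})\]
is surjective on the curve $C'$.

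This curve-level surjectivity is the main obstacle. To handle it, I would invoke the Petri-generality of $(C', A)$ from Lemma~\ref{BNP-gen}, which furnishes an isomorphism $\mathrm{H}^0(A) \otimes \mathrm{H}^0(A^{-1} \otimes \omega_{C'}) \xrightarrow{\sim} \mathrm{H}^0(\omega_{C'})$ (the Petri map, injective by Petri-generality and an isomorphism by the dimension count $2(k-1) = g(C')$). Substituting this factorization, the problem reduces to surjectivity of the chain $\mathrm{H}^0(A^{-1} \otimes \omega_{C'})^{\otimes 2} \otimes \mathrm{H}^0(A) \to \mathrm{H}^0(A^{-1} \otimes \omega_{C'}^{\otimes 2})$, which follows from the base-point-free pencil trick applied to $A$ combined with the vanishing $\mathrm{H}^1(C', A^{-3} \otimes \omega_{C'}^{\otimes 2}) = 0$ (non-speciality, valid once $\deg(A^{-3} \otimes \omega_{C'}^{\otimes 2}) = 5k-12 \geq 2g(C')-1$; the finitely many small cases of $k$ can be handled by direct inspection using the specific geometry of $(C', A)$).
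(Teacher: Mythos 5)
Your reduction to the curve is correct and runs parallel to the paper's: the paper restricts the multiplication map to a smooth $C' \in |L-\Delta|$ (via an observation of Gallego--Purnaprajna) and uses the splitting $\mathrm{H}^0(E_{|_{C'}}(-\Delta)) \simeq \mathrm{H}^0(A) \oplus \mathrm{H}^0(\omega_{C'}\otimes A^{-1})$, while you use the dual Lazarsfeld--Mukai sequence on $X$ itself; your variant is fine and even disposes of one of the two resulting multiplication maps for free, since your subsheaf is the trivial bundle $\mathrm{H}^0(A)^{\vee}\otimes\mathcal{O}_X$. Both routes land on the same statement: surjectivity of $\mathrm{H}^0(\omega_{C'}\otimes A^{-1})\otimes \mathrm{H}^0(\omega_{C'}) \to \mathrm{H}^0(\omega_{C'}^{\otimes 2}\otimes A^{-1})$.

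The gap is in how you close this last step. The base-point-free pencil trick for $A$, together with $\mathrm{H}^1(\omega_{C'}^{\otimes 2}\otimes A^{-3})=0$, gives surjectivity of $\mathrm{H}^0(A)\otimes\mathrm{H}^0(\omega_{C'}^{\otimes 2}\otimes A^{-2}) \to \mathrm{H}^0(\omega_{C'}^{\otimes 2}\otimes A^{-1})$; but to feed $\mathrm{H}^0(\omega_{C'}\otimes A^{-1})^{\otimes 2}\otimes\mathrm{H}^0(A)$ into that map you must first know that $\mathrm{Sym}^2\mathrm{H}^0(\omega_{C'}\otimes A^{-1}) \to \mathrm{H}^0(\omega_{C'}^{\otimes 2}\otimes A^{-2})$ is surjective, and grouping the factors the other way is circular (it reproduces the map you are trying to prove surjective). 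This auxiliary surjectivity is a genuine normal-generation statement for the residual bundle $\omega_{C'}\otimes A^{-1}$, whose degree sits exactly one below the Green--Lazarsfeld bound $2g(C')+1-2h^1-\mathrm{Cliff}(C')$; worse, for small $k$ it fails outright by a dimension count ($\dim\mathrm{Sym}^2\mathrm{H}^0(\omega_{C'}\otimes A^{-1})=\binom{k}{2}$ is smaller than $h^0(\omega_{C'}^{\otimes 2}\otimes A^{-2})=4k-9$ when $k\le 5$). So the Petri-map substitution does not close the argument, and the deferral of ``finitely many small cases'' compounds rather than contains the problem. The paper's closing device, which you need here, is different and uses no Petri input: it proves that $\mathrm{H}^0(\omega_{C'})\otimes\mathrm{H}^0(M)\to\mathrm{H}^0(\omega_{C'}\otimes M)$ is surjective for \emph{every} base-point-free $M$ with $h^0(M)\ge 2$, by subtracting a general effective divisor $Z$ of degree $h^0(M)-2$ to reduce to the pencil case and then letting $Z$ vary so that the subspaces $\mathrm{H}^0(\omega_{C'}\otimes M(-Z))$ span the target; applying this with $M=\omega_{C'}\otimes A^{-1}$ (base-point free by Brill--Noether generality of $C'$) finishes the proof uniformly in $k$.
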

\begin{proof}
Let $C \in |L-\Delta|$ be a smooth curve. It suffices to prove surjectivity of the restricted multiplication map $\mathrm{H}^0(E_{|_C}(-\Delta)) \otimes \mathrm{H}^0(\mathrm{K}_C) \to \mathrm{H}^0(E_{|_C}(\mathrm{K}_C-\Delta))$, \cite[Observation 2.3]{gallego-purnaprajna}. Now $\mathrm{H}^0(E_{|_C}(-\Delta)) \simeq \mathrm{H}^0(B) \oplus \mathrm{H}^0(\mathrm{K}_C-B)$ where $B$ is a $g^1_k$ on $C$, \cite{voisin-wahl}. The statement now follows from the following: for any base-point free line bundle $A$ on $C$ with $h^0(A) \geq 2$, the multiplication map $\mathrm{H}^0(\mathrm{K}_C) \otimes \mathrm{H}^0(A) \to \mathrm{H}^0(\mathrm{K}_C+A)$ is surjective. If $h^0(A)=2$, this follows immediately from the base-point free pencil trick. Indeed, we have an exact sequence
$$0 \to K_C \otimes A^{-1} \to H^0(A) \otimes K_C \to K_C \otimes A \to 0,$$
and the claim follows from fact that $h^1(K_C \otimes A^{-1})=h^1(H^0(A) \otimes K_C )=h^0(A)$. Otherwise, let $Z$ be a general effective divisor of degree $h^0(A)-2$. Thus $\mathrm{H}^0(\mathrm{K}_C) \otimes \mathrm{H}^0(A-Z) \to \mathrm{H}^0(\mathrm{K}_C+A-Z)$ is surjective. Since this holds for any general such divisor, this proves the claim.
\end{proof}
We will make use of the following direct consequence of the previous lemma.
\begin{lem} \label{mult-maps}
We have $\mathrm{H}^2(X,M_L(\Delta) \otimes N)=\mathrm{H}^2(M_L(\Delta))=0$.
\end{lem}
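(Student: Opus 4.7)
The plan is to prove the two vanishings separately, observing that the second reduces to the first plus one extra cohomology vanishing on $X$.

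For $\mathrm{H}^2(M_L(\Delta))=0$, I twist the defining sequence $0 \to M_L \to \mathrm{H}^0(L)\otimes \mathcal{O}_X \to L \to 0$ by $\mathcal{O}(\Delta)$, which gives $0 \to M_L(\Delta) \to \mathrm{H}^0(L)\otimes \mathcal{O}(\Delta) \to L' \to 0$ since $L+\Delta=L'$. In the long exact sequence, $\mathrm{H}^2(M_L(\Delta))$ sits between $\mathrm{H}^1(L')$ — vanishing by Kawamata--Viehweg since $L'$ is big and nef on the K3 surface — and $\mathrm{H}^0(L)\otimes \mathrm{H}^2(\mathcal{O}(\Delta))\simeq \mathrm{H}^0(L)\otimes \mathrm{H}^0(\mathcal{O}(-\Delta))^{\vee}$, which vanishes because $\Delta$ is effective.

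For $\mathrm{H}^2(M_L(\Delta)\otimes N)=0$, I tensor the defining sequence $0\to N \to \mathrm{H}^0(E(-\Delta))\otimes \mathcal{O}_X \to E(-\Delta) \to 0$ by $M_L(\Delta)$ and note the identification $M_L(\Delta)\otimes E(-\Delta) \simeq M_L\otimes E$. The relevant piece of the long exact sequence, combined with the previous step, reduces this vanishing to $\mathrm{H}^1(M_L\otimes E)=0$.

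For the latter, I tensor $0 \to M_L \to \mathrm{H}^0(L)\otimes \mathcal{O}_X \to L \to 0$ by $E$. Since $E=\mu^*\hat{E}$ and $\mu$ is a rational resolution of rational singularities, $\mathrm{H}^1(E)\simeq \mathrm{H}^1(\hat{X},\hat{E})=0$, so the question becomes surjectivity of the multiplication $\mathrm{H}^0(L)\otimes \mathrm{H}^0(E) \to \mathrm{H}^0(L\otimes E)$. I would establish this via the base-point-free pencil trick: since $L$ is base-point-free by Lemma \ref{BNP-gen}, I pick a general pencil $V\seq \mathrm{H}^0(L)$ and tensor the Koszul sequence $0 \to L^{-1} \to V\otimes \mathcal{O}_X \to L \to 0$ by $E$, reducing surjectivity of $V \otimes \mathrm{H}^0(E) \to \mathrm{H}^0(L\otimes E)$ — and hence of the larger $\mathrm{H}^0(L)\otimes \mathrm{H}^0(E) \to \mathrm{H}^0(L\otimes E)$ — to $\mathrm{H}^1(E(-L))=0$.

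The final vanishing is where the specific geometry intervenes. Since $\det E = L'$, we have $E^{\vee}\simeq E(-L')$, so Serre duality on the K3 surface yields $\mathrm{H}^1(E(-L))\simeq \mathrm{H}^1(E^{\vee}\otimes L)^{\vee}=\mathrm{H}^1(E(-\Delta))^{\vee}$. The previous lemma identifies $E(-\Delta)$ with the Lazarsfeld--Mukai bundle of a $g^1_k$ on a general curve in $|L-\Delta|$, which has vanishing higher cohomology by construction. This is the main obstacle in the sense that, without the identification of $E(-\Delta)$ as an LM bundle, the pencil trick alone would not close the argument.
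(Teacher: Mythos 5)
Your reduction steps agree with the paper's: twisting the Euler-type sequence for $M_L$ by $\mathcal{O}(\Delta)$ and using $\mathrm{H}^1(L')=\mathrm{H}^2(\mathcal{O}(\Delta))=0$ handles the second vanishing, and tensoring the defining sequence of $N$ by $M_L(\Delta)$, together with $M_L(\Delta)\otimes E(-\Delta)\simeq M_L\otimes E$ and $\mathrm{H}^1(E)=0$, correctly reduces the first vanishing to surjectivity of the multiplication map $\mathrm{H}^0(L)\otimes\mathrm{H}^0(E)\to\mathrm{H}^0(E(L))$. This is exactly where the paper also lands. Your Serre duality computation $\mathrm{H}^1(E(-L))\simeq\mathrm{H}^1(E(-\Delta))^{\vee}=0$ is also correct.

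However, the final step has a genuine gap: the base-point-free pencil trick cannot be run on the surface $X$ itself. For a pencil $V\seq\mathrm{H}^0(L)$ on a surface with $(L)^2>0$, the two generating divisors always meet, so the evaluation $V\otimes\mathcal{O}_X\to L$ is never surjective; the Koszul sequence is $0\to L^{-1}\to V\otimes\mathcal{O}_X\to L\otimes I_B\to 0$ with $B$ the nonempty base scheme of length $(L)^2$. Your argument therefore only yields surjectivity of $V\otimes\mathrm{H}^0(E)\to\mathrm{H}^0(E\otimes L\otimes I_B)$, a subspace of large codimension (up to $2(L)^2$) in $\mathrm{H}^0(E\otimes L)$, and varying $V$ does not obviously recover the full target. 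The paper avoids this by first passing to a smooth curve $C\in|L-\Delta|$ (via the Gallego--Purnaprajna observation, which reduces surjectivity of a multiplication map on the surface to the corresponding map on a hyperplane-type section), running the pencil trick on the curve $C$ where base-point-free pencils do exist, and then bootstrapping from $\mathrm{H}^0(L-\Delta)\otimes\mathrm{H}^0(E(-\Delta))\twoheadrightarrow\mathrm{H}^0(E(L-2\Delta))$ to the desired surjectivity using the triviality of $E_{|_{\Delta}}$. Some such detour through a curve (or another genuinely two-dimensional argument) is needed to close your proof.
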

\begin{proof}
From the exact sequence
$$0 \to N \to \mathrm{H}^0(E(-\Delta)) \otimes \mathcal{O}_X \to E(-\Delta)\to 0,$$
it suffices to show $\mathrm{H}^2(M_L(\Delta))=0$ and $\mathrm{H}^1(M_L(E))=0$. The first vanishing follows immediately from the defining sequence for $M_L$.\smallskip

The vanishing $\mathrm{H}^1(M_L(E))=0$ is equivalent to surjectivity of the multiplication map $\mathrm{H}^0(L) \otimes \mathrm{H}^0(E) \to \mathrm{H}^0(E(L))$. We will use the following principle. If we have a commutative diagram
{\small{$$\begin{tikzcd}
0 \arrow[r] & V_1 \arrow[r, "\alpha"]  \arrow[d, "f"] & V_2 \arrow[r, "\beta"] \arrow[d, "g"] & V_3 \arrow[r] \arrow[d,"h"] & 0  \\
0 \arrow[r] & W_1 \arrow[r, "\gamma"] & W_2 \arrow[r, "\delta"] & W_3 
\end{tikzcd}$$}}
of vector spaces with exact rows, and if $h$ is surjective, then so is $\delta$. If, further, $f$ is surjective, then so is $g$.\smallskip

We have surjectivity of $\mathrm{H}^0(L-\Delta) \otimes \mathrm{H}^0(E(-\Delta)) \to \mathrm{H}^0(E(L-2\Delta))$ by Lemma \ref{LM-mult-map}.  For a \emph{general} $s \in \mathrm{H}^0(E)$, we have the exact sequence $0 \to \mathcal{O}_X \to E \to L'\otimes I_{Z(s)} \to 0$. This implies $E_{|_{\Delta}} \simeq \mathcal{O}_{\Delta}^{\oplus 2}$ is trivial. Thus $\mathrm{H}^0(L-\Delta) \otimes \mathrm{H}^0(E_{|_{\Delta}}) \twoheadrightarrow \mathrm{H}^0(E_{|_{\Delta}}(L-\Delta))$. This shows $\mathrm{H}^0(L-\Delta) \otimes \mathrm{H}^0(E) \twoheadrightarrow \mathrm{H}^0(E(L-\Delta))$. Since $\mathrm{H}^0(L_{|_{\Delta}}) \otimes \mathrm{H}^0(E) \twoheadrightarrow \mathrm{H}^0(E_{|_{\Delta}}(L))$, using the triviality of $E_{|_{\Delta}}$, we obtain the vanishing $\mathrm{H}^1(M_L(E))=0$.\end{proof}

We now prove the vanishing in case $i=k+1$. The following proof should be compared to \cite[Lemma 8]{V2}.
\begin{prop} \label{green-odd-main}
We have $\mathrm{H}^{k+3}(B,\pi^* \mathcal{M} \otimes \mathrm{Sym}^{k+1}(S) ({p'}^*\Delta)(D))=0$. 
 \end{prop}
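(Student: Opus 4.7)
The plan is to reduce the vanishing, via the Euler sequence for $\chi$ and the Leray spectral sequences for $\chi$ and $f$, to statements on $X$ controlled by Lemma \ref{mult-maps}.

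First, from $0 \to \mathcal{O}_B(-2) \to \chi^*\mathcal{H}(-1) \to S \to 0$ one extracts, exactly as at the start of the proof of Proposition \ref{most-of-van}, the short exact sequence
$$0 \to \chi^*\mathrm{Sym}^k\mathcal{H}(-k-2) \to \chi^*\mathrm{Sym}^{k+1}\mathcal{H}(-k-1) \to \mathrm{Sym}^{k+1}S \to 0.$$
Tensoring by $\pi^*\mathcal{M} \otimes {p'}^*\mathcal{O}(\Delta) \otimes \mathcal{O}_B(D)$ and taking the long exact sequence in cohomology, the vanishing reduces to
$$\mathrm{H}^{k+3}\bigl(B,\; \pi^*\mathcal{M} \otimes \chi^*\mathrm{Sym}^{k+1}\mathcal{H} \otimes \mathcal{O}_B(-k-1)(D) \otimes {p'}^*\mathcal{O}(\Delta)\bigr)=0$$
together with
$$\mathrm{H}^{k+4}\bigl(B,\; \pi^*\mathcal{M} \otimes \chi^*\mathrm{Sym}^{k}\mathcal{H} \otimes \mathcal{O}_B(-k-2)(D) \otimes {p'}^*\mathcal{O}(\Delta)\bigr)=0.$$

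Next I would apply the Leray spectral sequence for the $\PP^{k-1}$-bundle $\chi: B \to \PP(\mathcal{F})$. Since $\pi^*\mathcal{M} = \chi^*f^*M_L$ and ${p'}^*\mathcal{O}(\Delta) = \chi^*f^*\mathcal{O}(\Delta)$, the projection formula pulls these factors through $R\chi_*$. The identity $\mathcal{O}_B(D) = \mathcal{O}_{\PP(\mathcal{H})}(1) \otimes \chi^*\mathcal{O}_{\PP(\mathcal{F})}(-1)$, together with relative Serre duality on the $\PP^{k-1}$ fibres of $\chi$ and the isomorphism $\det\mathcal{H} \simeq f^*\det N \otimes \mathcal{O}_{\PP(\mathcal{F})}(-1)$, should show that these direct images are concentrated in degree $q = k-1$ with
$$R^{k-1}\chi_*\bigl(\mathcal{O}_B(-k-1)(D)\bigr) \simeq f^*\det N^\vee, \qquad R^{k-1}\chi_*\bigl(\mathcal{O}_B(-k-2)(D)\bigr) \simeq \mathcal{H} \otimes f^*\det N^\vee.$$
Substituting, the two vanishings above become
\begin{align*}
\mathrm{H}^4\bigl(\PP(\mathcal{F}),\; f^*(M_L(\Delta) \otimes \det N^\vee) \otimes \mathrm{Sym}^{k+1}\mathcal{H}\bigr) &= 0, \\
\mathrm{H}^5\bigl(\PP(\mathcal{F}),\; f^*(M_L(\Delta) \otimes \det N^\vee) \otimes \mathrm{Sym}^{k}\mathcal{H} \otimes \mathcal{H}\bigr) &= 0.
\end{align*}

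To evaluate these I would use the short exact sequence $0 \to f^*N \to \mathcal{H} \to \mathcal{O}_{\PP(\mathcal{F})}(-1) \to 0$, which filters $\mathrm{Sym}^j\mathcal{H}$ by graded pieces $f^*\mathrm{Sym}^{j-i}N \otimes \mathcal{O}_{\PP(\mathcal{F})}(-i)$, for $0 \leq i \leq j$. Pushing forward along the $\PP^3$-bundle $f: \PP(\mathcal{F}) \to X$, the direct image $R^qf_*\mathcal{O}_{\PP(\mathcal{F})}(-i)$ is concentrated either in $q = 0$ (only when $i = 0$) or in $q = 3$ (for $i \geq 4$, with value $\mathrm{Sym}^{i-4}\mathcal{F} \otimes \det\mathcal{F} = \mathrm{Sym}^{i-4}\mathcal{F} \otimes \det N^\vee$). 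Since $\dim X = 2$, so that $\mathrm{H}^p = 0$ on $X$ for $p > 2$, the Leray spectral sequence for $f$ reduces the problem to a finite list of vanishings of the form $\mathrm{H}^p\bigl(X,\; M_L(\Delta) \otimes V\bigr)$ with $p \leq 2$, where $V$ is an expression built from tensor and symmetric powers of $N$, $\mathcal{F}$, and $\det N^\vee$.

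Finally, the critical terms in the reduction are those with $p = 2$. Using the defining sequence $0 \to N \to \mathrm{H}^0(E(-\Delta)) \otimes \mathcal{O}_X \to E(-\Delta) \to 0$ and its symmetric and tensor powers to resolve $V$ into pieces involving $E(-\Delta)$ and trivial bundles, every required vanishing traces back to the two identities $\mathrm{H}^2(X, M_L(\Delta)) = 0$ and $\mathrm{H}^2(X, M_L(\Delta) \otimes N) = 0$ proven in Lemma \ref{mult-maps}. The main obstacle will be the bookkeeping involved in tracking the filtration of $\mathrm{Sym}^{k+1}\mathcal{H}$ (respectively $\mathrm{Sym}^k\mathcal{H} \otimes \mathcal{H}$) through the two successive projective bundles $\chi$ and $f$; the content of the proof is formal, and Lemma \ref{mult-maps} supplies precisely the $X$-level input that is needed to close the argument.
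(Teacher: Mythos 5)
Your overall framework (the Euler-type sequence for $\mathrm{Sym}^{k+1}S$, pushing forward along the two projective bundles $\chi$ and $f$, and feeding the result into Lemma \ref{mult-maps}) is the right one, but there is a genuine gap at the very first reduction. You split the long exact sequence of
$$0 \to \chi^*\mathrm{Sym}^k\mathcal{H}(-k-2) \to \chi^*\mathrm{Sym}^{k+1}\mathcal{H}(-k-1) \to \mathrm{Sym}^{k+1}S \to 0$$
into two \emph{separate} vanishings, and in particular you require
$\mathrm{H}^{k+3}\bigl(B,\pi^*\mathcal{M}\otimes\chi^*\mathrm{Sym}^{k+1}\mathcal{H}(-k-1)({p'}^*\Delta)(D)\bigr)=0$. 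This group does not vanish (and certainly cannot be reached by your proposed reduction). Pushing it down, one finds
$\mathrm{H}^4\bigl(\PP(\mathcal{F}), f^*M_L(\Delta)\otimes\mathrm{Sym}^{k+1}\mathcal{H}\otimes\det\mathcal{H}\otimes\mathcal{O}_{\PP(\mathcal{F})}(-1)\bigr)$, and since $f:\PP(\mathcal{F})\to X$ is a $\PP^3$-bundle over a surface, the Leray spectral sequence computes this as $\mathrm{H}^1\bigl(X, R^3f_*(\cdot)\bigr)$ --- a first cohomology group on $X$, not a second one. Lemma \ref{mult-maps} supplies only $\mathrm{H}^2$-vanishings, and the relevant $\mathrm{H}^1$ groups have no reason to vanish (already $\mathrm{H}^1(X,M_L(\Delta))\neq 0$, since $h^0(L')=h^0(L'-\Delta)+1$ forces the multiplication map $\mathrm{H}^0(L)\otimes\mathrm{H}^0(\mathcal{O}(\Delta))\to\mathrm{H}^0(L')$ to have a one-dimensional cokernel). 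The paper's proof avoids exactly this: instead of killing $\mathrm{H}^{k+3}$ of the middle term, it proves that the map from $\mathrm{H}^{k+3}$ of the subsheaf \emph{onto} it is surjective. After relative duality along $\chi$ identifies the two direct images as $\mathrm{Sym}^k\mathcal{H}\otimes\mathcal{H}\otimes\det\mathcal{H}\otimes\mathcal{O}_{\PP(\mathcal{F})}(-1)$ and $\mathrm{Sym}^{k+1}\mathcal{H}\otimes\det\mathcal{H}\otimes\mathcal{O}_{\PP(\mathcal{F})}(-1)$, surjectivity is automatic because the composite $\mathrm{Sym}^{k+1}\mathcal{H}\to\mathrm{Sym}^k\mathcal{H}\otimes\mathcal{H}\to\mathrm{Sym}^{k+1}\mathcal{H}$ is multiplication by $k+1$, so the second map is a split surjection of bundles. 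That splitting trick is the missing idea; with it, only the single vanishing $\mathrm{H}^{k+4}$ of the subsheaf term remains.

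Two smaller points. First, your direct images are off: $R^{k-1}\chi_*\bigl(\mathcal{O}_B(-k-1)(D)\bigr)\simeq\det\mathcal{H}\otimes\mathcal{O}_{\PP(\mathcal{F})}(-1)\simeq f^*\det N\otimes\mathcal{O}_{\PP(\mathcal{F})}(-2)$, not $f^*\det N^{\vee}$ (and similarly for the other one); the $\mathcal{O}_{\PP(\mathcal{F})}$-twists you dropped are exactly what makes the $R^3f_*$ terms nonzero. Second, even for the surviving $\mathrm{H}^{k+4}$ term, filtering $\mathrm{Sym}^k\mathcal{H}\otimes\mathcal{H}$ over $\PP(\mathcal{F})$ and pushing to $X$ produces groups $\mathrm{H}^2\bigl(X, M_L(\Delta)\otimes\mathrm{Sym}^jN\otimes(\cdots)\bigr)$ for $j$ up to $k+1$, which Lemma \ref{mult-maps} does not directly control. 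The paper sidesteps this by converting $\mathrm{Sym}^k\mathcal{H}\otimes\det\mathcal{H}$ back into $R^{k-1}\chi_*\mathcal{O}_B(-2k)$, returning to $B$ and then to $X\times\PP$, where the sequence $0\to{p'}^*N\to\chi^*\mathcal{H}\to{q'}^*\mathcal{O}_{\PP}(-1)(D)\to 0$ and the K\"unneth formula leave exactly the two groups $\mathrm{H}^2(X,M_L(\Delta)\otimes N)$ and $\mathrm{H}^2(X,M_L(\Delta))$ of Lemma \ref{mult-maps}.
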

\begin{proof}
We have the short exact sequence
$$0 \to \chi^* \mathrm{Sym}^{k} \mathcal{H} (-k-2) \to \chi^*\mathrm{Sym}^{k+1} \mathcal{H} (-k-1) \to \mathrm{Sym}^{k+1}(S) \to 0.$$
We firstly claim that $$\mathrm{H}^{k+3}(\pi^* \mathcal{M} \otimes  \chi^* \mathrm{Sym}^{k} \mathcal{H} (-k-2)  ({p'}^*\Delta)(D))\to \mathrm{H}^{k+3}(\pi^* \mathcal{M} \otimes  \chi^*\mathrm{Sym}^{k+1} \mathcal{H} (-k-1)  ({p'}^*\Delta)(D))$$
is surjective. We have $\mathcal{O}(D) \simeq \mathcal{O}(1)\otimes \chi^*\mathcal{O}_{\PP(\mathcal{F})}(-1)$ and further $\omega_{\chi}\simeq \chi^*\det \mathcal{H}^{\vee}(-k)$. The map can thus be written as
{\small{\begin{align*}
&\mathrm{H}^4(\PP(\mathcal{F}),f^*M_L(\Delta) \otimes \mathrm{Sym}^k\mathcal{H} \otimes R^{k-1}\chi_*\mathcal{O}(-(k+1))\otimes \mathcal{O}_{\PP(\mathcal{F})}(-1)) \to  \\
&\mathrm{H}^4(\PP(\mathcal{F}),f^*M_L(\Delta) \otimes \mathrm{Sym}^{k+1}\mathcal{H} \otimes R^{k-1}\chi_*\mathcal{O}(-k)\otimes \mathcal{O}_{\PP(\mathcal{F})}(-1)) .
\end{align*}}}
Using relative duality, this becomes
{\small{\begin{align*}
&\mathrm{H}^4(\PP(\mathcal{F}),f^*M_L(\Delta) \otimes \mathrm{Sym}^k\mathcal{H} \otimes \mathcal{H} \otimes \mathcal{O}_{\PP(\mathcal{F})}(-1)\otimes \det \mathcal{H}) \to  \\
&\mathrm{H}^4(\PP(\mathcal{F}),f^*M_L(\Delta) \otimes \mathrm{Sym}^{k+1}\mathcal{H} \otimes \mathcal{O}_{\PP(\mathcal{F})}(-1)\otimes \det \mathcal{H})),
\end{align*}}}
since $\chi_*\mathcal{O}(n)\simeq \mathrm{Sym}^n\mathcal{H}^{\vee}$. This map is surjective, since the composite $ \mathrm{Sym}^{k+1}\mathcal{H} \to \mathrm{Sym}^k\mathcal{H} \otimes \mathcal{H} \to \mathrm{Sym}^{k+1}\mathcal{H} $ of natural maps is multiplication by $k+1$.\smallskip

To conclude, it suffices that $\mathrm{H}^{k+4}(\pi^* \mathcal{M} \otimes  \chi^* \mathrm{Sym}^{k} \mathcal{H} (-k-2)({p'}^*\Delta)(D))=0$, or, equivalently
$$\mathrm{H}^5(\PP(\mathcal{F}),f^*M_L(\Delta) \otimes \mathrm{Sym}^k\mathcal{H} \otimes \mathcal{H} \otimes \mathcal{O}_{\PP(\mathcal{F})}(-1)\otimes \det \mathcal{H}))=0.$$
The same argument identifies this space with $\mathrm{H}^{k+4}(\pi^* \mathcal{M} \otimes \chi^*\mathcal{H} (-2k-1+{p'}^*\Delta)(D))$, using $\chi_*\mathcal{O}(n)\simeq \mathrm{Sym}^n\mathcal{H}^{\vee}$ again. From the exact sequence
$$0 \to {p'}^*N \to \chi^* \mathcal{H} \to {q'}^*\mathcal{O}_{\PP}(-1)(D) \to 0,$$
the vanishing is implied by
{\small{\begin{align*}
\mathrm{H}^{k+4}(X \times \PP, M_L(\Delta) \otimes N \boxtimes \mathcal{O}_{\PP}(-2k-1))=\mathrm{H}^{k+4}(X \times \PP, M_L(\Delta)\boxtimes \mathcal{O}_{\PP}(-2k-2))=0
\end{align*}}}
which follow from Lemma \ref{mult-maps} and the K\"unneth formula.
\end{proof}

From the above vanishings, to show the surjectivity of $\phi$ it is now enough to show that the natural map $\mathrm{H}^{k+2}(\pi^*\mathcal{M} \otimes \mathrm{Sym}^{k+1}(S)({p'}^*\Delta)(D)) \to \mathrm{H}^{k+2}(\bigwedge^2 \pi^*\mathcal{M} \otimes \mathrm{Sym}^{k}(S)({p'}^*\Delta)(D))$ is surjective. We first rewrite this map.
\begin{lem} \label{identilem}
For any $j$, we have have natural isomorphisms 
{\small{
\begin{align*}
\mathrm{H}^j(B, \pi^* \mathcal{M} \otimes \mathrm{Sym}^{k+1}(S)({p'}^*\Delta)(D))&\simeq \mathrm{H}^{j+1}(B, \pi^* \mathcal{M} \otimes S(-2k)({p'}^*\Delta)(D)), \\
\mathrm{H}^{j}(B,\bigwedge^{2}\pi^*\mathcal{M}  \otimes \mathrm{Sym}^k(S)({p'}^*\Delta)(D))&\simeq \mathrm{H}^{j+1}(B,\bigwedge^{2}\pi^*\mathcal{M}(-2k)({p'}^*\Delta)(D))
\end{align*}
}}
\end{lem}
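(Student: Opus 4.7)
My plan is to push forward both sides of each isomorphism along the projective bundle $\chi \colon B \simeq \PP(\mathcal{H}) \to \PP(\mathcal{F})$ and reduce to a comparison on $\PP(\mathcal{F})$. Since $p' = f \circ \chi$ and $\mathcal{O}_B(D) \simeq \mathcal{O}_B(1) \otimes \chi^*\mathcal{O}_{\PP(\mathcal{F})}(-1)$ (with the paper's convention $\mathcal{O}_B(n) = {q'}^*\mathcal{O}_{\PP}(n)$), the projection formula peels off the pullback factors and reduces each claim to a relationship between the pushforwards $R\chi_*(\mathrm{Sym}^{k+1}S \otimes \mathcal{O}_B(1))$ versus $R\chi_*(S \otimes \mathcal{O}_B(-2k+1))$ for the first isomorphism, and $R\chi_*(\mathrm{Sym}^{k}S \otimes \mathcal{O}_B(1))$ versus $R\chi_*\mathcal{O}_B(-2k+1)$ for the second. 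Each claimed isomorphism follows if the two pushforwards in a pair are isomorphic as sheaves on $\PP(\mathcal{F})$ but are concentrated in consecutive cohomological degrees, since then the Leray spectral sequence degenerates into precisely the desired unit degree shift on $B$.

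The computations rest on the Euler sequence $0 \to \mathcal{O}_B(-2) \to \chi^*\mathcal{H}(-1) \to S \to 0$ combined with the standard formulas for the higher direct images of line bundles along the $\PP^{k-1}$-bundle $\chi$: $\chi_*\mathcal{O}_B(n) = \mathrm{Sym}^n\mathcal{H}^\vee$ for $n \geq 0$, $R^{k-1}\chi_*\mathcal{O}_B(-k-n) = \mathrm{Sym}^n\mathcal{H}\otimes\det\mathcal{H}$ for $n \geq 0$, and all other $R^i\chi_*\mathcal{O}_B(n)$ vanish. Taking symmetric powers in the Euler sequence produces the short exact sequence
\[
0 \to \chi^*\mathrm{Sym}^{m-1}\mathcal{H}(-m-1) \to \chi^*\mathrm{Sym}^m\mathcal{H}(-m) \to \mathrm{Sym}^m S \to 0.
\]
For $m=k+1$, twisting by $\mathcal{O}_B(1)$ and pushing forward, only the top direct images $R^{k-1}\chi_*$ of the outer terms survive, and the resulting connecting morphism is the multiplication $\mathrm{Sym}^k\mathcal{H}\otimes \mathcal{H}\twoheadrightarrow \mathrm{Sym}^{k+1}\mathcal{H}$, which is surjective with kernel $\mathbb{S}_{(k,1)}\mathcal{H}$ by Pieri. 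Hence $R\chi_*(\mathrm{Sym}^{k+1}S\otimes \mathcal{O}_B(1))\simeq \mathbb{S}_{(k,1)}\mathcal{H}\otimes\det\mathcal{H}$, concentrated in degree $k-2$.

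To handle $S\otimes \mathcal{O}_B(-2k+1)$, I twist the Euler sequence itself by $\mathcal{O}_B(-2k+1)$ to obtain $0 \to \mathcal{O}_B(-2k-1) \to \chi^*\mathcal{H}(-2k) \to S\otimes\mathcal{O}_B(-2k+1) \to 0$, and the analogous analysis identifies the connecting map with the comultiplication $\mathrm{Sym}^{k+1}\mathcal{H} \hookrightarrow \mathcal{H}\otimes\mathrm{Sym}^k\mathcal{H}$, injective with cokernel $\mathbb{S}_{(k,1)}\mathcal{H}$. Thus $R\chi_*(S\otimes\mathcal{O}_B(-2k+1))\simeq \mathbb{S}_{(k,1)}\mathcal{H}\otimes\det\mathcal{H}$, concentrated in degree $k-1$, and the first isomorphism of the lemma follows from the one-step degree shift. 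The second isomorphism is proved identically with $m=k$: the middle term of the Koszul sequence is $\chi^*\mathrm{Sym}^k\mathcal{H}(-k+1)$, whose $R\chi_*$ vanishes for $k\geq 2$ (as $\mathcal{O}_B(-k+1)$ sits in the acyclic range on fibres), yielding $R\chi_*(\mathrm{Sym}^k S\otimes\mathcal{O}_B(1))\simeq \mathrm{Sym}^{k-1}\mathcal{H}\otimes\det\mathcal{H}$ in degree $k-2$, while directly $R\chi_*\mathcal{O}_B(-2k+1)\simeq \mathrm{Sym}^{k-1}\mathcal{H}\otimes\det\mathcal{H}$ in degree $k-1$.

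The only technically delicate point—really the main obstacle—is verifying that in each pair the two higher direct images are genuinely concentrated in a single cohomological degree, so that Leray collapses into a clean degree shift rather than producing a nontrivial spectral sequence. This reduces entirely to the Euler-sequence computation together with the explicit vanishings for $R^i\chi_*\mathcal{O}_B(n)$ on the $\PP^{k-1}$-bundle.
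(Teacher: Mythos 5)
Your argument is correct and is essentially the paper's own proof: both push forward along $\chi$ using the symmetric-power Euler sequences $0 \to \chi^*\mathrm{Sym}^{m-1}\mathcal{H}(-m-1) \to \chi^*\mathrm{Sym}^m\mathcal{H}(-m) \to \mathrm{Sym}^m S \to 0$ together with relative duality for the $\PP^{k-1}$-bundle, and both rest on the characteristic-zero splitting $\mathrm{Sym}^{k}\mathcal{H}\otimes\mathcal{H}\simeq \mathrm{Sym}^{k+1}\mathcal{H}\oplus \mathbb{S}_{(k,1)}\mathcal{H}$ (which the paper phrases as the composite $\mathrm{Sym}^{k+1}\mathcal{H}\to\mathrm{Sym}^{k}\mathcal{H}\otimes\mathcal{H}\to\mathrm{Sym}^{k+1}\mathcal{H}$ being multiplication by $k+1$, and you phrase as Pieri). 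The only cosmetic difference is that you name the common piece $\mathbb{S}_{(k,1)}\mathcal{H}\otimes\det\mathcal{H}$ explicitly and compare degrees of concentration, whereas the paper identifies the kernel of the multiplication with the cokernel of its splitting and runs the Euler sequence backwards to land on $\mathrm{H}^{j+1}(B,\pi^*\mathcal{M}\otimes S(-2k)({p'}^*\Delta)(D))$.
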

\begin{proof}
Arguing as in Proposition \ref{green-odd-main}, identify $\mathrm{H}^j(B, \pi^* \mathcal{M} \otimes \mathrm{Sym}^{k+1}(S)({p'}^*\Delta)(D))$ with
{\small{\begin{align*}
\mathrm{Ker}(&\mathrm{H}^{j+2-k}(\PP(\mathcal{F}),f^*M_L(\Delta) \otimes \mathrm{Sym}^k\mathcal{H} \otimes \mathcal{H} \otimes \mathcal{O}_{\PP(\mathcal{F})}(-1) \otimes \det\mathcal{H}) \twoheadrightarrow  \\
&\mathrm{H}^{j+2-k}(\PP(\mathcal{F}),f^*M_L(\Delta) \otimes \mathrm{Sym}^{k+1}\mathcal{H} \otimes \mathcal{O}_{\PP(\mathcal{F})}(-1)) \otimes \det \mathcal{H}) .
\end{align*}}}
The above surjection splits naturally, identifying $\mathrm{H}^j(B, \pi^* \mathcal{M} \otimes \mathrm{Sym}^{k+1}(S)({p'}^*\Delta)(D))$ with
{\small{\begin{align*}
\mathrm{Coker}(&\mathrm{H}^{j+2-k}(\PP(\mathcal{F}),f^*M_L(\Delta) \otimes \mathrm{Sym}^{k+1}\mathcal{H} \otimes \mathcal{O}_{\PP(\mathcal{F})}(-1) \otimes \det \mathcal{H}) \hookrightarrow \\
&\mathrm{H}^{j+2-k}(\PP(\mathcal{F}),f^*M_L(\Delta) \otimes \mathrm{Sym}^k\mathcal{H} \otimes \mathcal{H} \otimes \mathcal{O}_{\PP(\mathcal{F})}(-1)) \otimes \det \mathcal{H}),
\end{align*}}}
which is naturally identified with
\begin{align*}
\mathrm{Coker}\left( \mathrm{H}^{j+1}(B, \pi^* \mathcal{M}(-2k-2)({p'}^*\Delta)(D)) \hookrightarrow
 \mathrm{H}^{j+1}(B, \pi^* \mathcal{M} \otimes \mathcal{H}(-2k-1)({p'}^*\Delta)(D))\right).
\end{align*}
From the sequence $0 \to \mathcal{O}_B(-2) \to \chi^* \mathcal{H} (-1)  \to S \to 0$, the above space is isomorphic to $\mathrm{H}^{j+1}(B, \pi^* \mathcal{M} \otimes S(-2k)({p'}^*\Delta)(D))$, as required.\smallskip

For the second isomorphism, the exact sequence
$$0 \to \chi^* \mathrm{Sym}^{k-1} \mathcal{H} (-k-1) \to \chi^*\mathrm{Sym}^{k} \mathcal{H} (-k) \to \mathrm{Sym}^k(S) \to 0,$$
together with relative duality for $\chi$, provides natural isomorphisms 
{\small{\begin{align*}
\mathrm{H}^{j}(B, \bigwedge^{2}\pi^*\mathcal{M}  \otimes \mathrm{Sym}^k(S)({p'}^*\Delta)(D))& \simeq \mathrm{H}^{j+1}(B,\bigwedge^{2}\pi^*\mathcal{M}  \otimes \chi^*\mathrm{Sym}^{k-1}(\mathcal{H})(-k-1)({p'}^*\Delta)(D)) \\
&\simeq \mathrm{H}^{k+1-j}(\PP(\mathcal{F}), \bigwedge^{2}f^*M_L(\Delta) \otimes \mathrm{Sym}^{k-1}\mathcal{H} \otimes \mathcal{O}_{\PP(\mathcal{F})}(-1)\otimes \det \mathcal{H})\\
& \simeq \mathrm{H}^{j+1}(B,\bigwedge^{2}\pi^*\mathcal{M}  \otimes {q'}^*\mathcal{O}_{\PP}(-2k)({p'}^*\Delta)(D)).
\end{align*}}}
\end{proof}
In order to prove surjectivity of the natural map $$\mathrm{H}^{k+3}(\pi^* \mathcal{M} \otimes S(-2k)({p'}^*\Delta)(D)) \to \mathrm{H}^{k+3}(\bigwedge^{2}\pi^*\mathcal{M}(-2k)({p'}^*\Delta)(D)),$$ we follow a strategy reminiscent of \cite[\S 3, Fourth step]{V2}. Consider the exact sequence
$$0 \to \mathrm{Sym}^2(S) \to \pi^*\mathcal{M} \otimes S \to \bigwedge^2 \pi^*\mathcal{M} \to \bigwedge^2 \Gamma \to 0.$$
We have $\mathrm{H}^{k+4}(\pi^* \mathcal{M} \otimes S(-2k)({p'}^*\Delta)(D))=0$ by Lemma \ref{identilem} and Proposition \ref{green-odd-main}. Thus the surjectivity of $\phi$ will follow from
$$\mathrm{H}^{k+4}(\mathrm{Sym}^2(S)(-2k)({p'}^*\Delta)(D))=\mathrm{H}^{k+3}(\bigwedge^2\Gamma (-2k)({p'}^*\Delta)(D))=0.$$
\begin{lem}
We have $\mathrm{H}^{k+4}(\mathrm{Sym}^2(S)(-2k)({p'}^*\Delta)(D))=0$.
\end{lem}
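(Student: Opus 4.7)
My plan is to mirror the strategy of Proposition \ref{green-odd-main}: resolve $\mathrm{Sym}^2(S)$ using the Euler sequence for the relative projective bundle $\chi:B\to\PP(\mathcal{F})$, take the associated long exact sequence, push everything down to $\PP(\mathcal{F})$ via Leray and relative duality, and finally reduce the desired vanishing to showing that a natural $GL(\mathcal{H})$-equivariant bundle map is an isomorphism. From the Euler sequence $0\to\mathcal{O}_B(-2)\to\chi^*\mathcal{H}(-1)\to S\to 0$ (with the first term of rank one), I take $\mathrm{Sym}^2$ to obtain the resolution $0\to\chi^*\mathcal{H}(-3)\to\chi^*\mathrm{Sym}^2\mathcal{H}(-2)\to\mathrm{Sym}^2 S\to 0$. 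Twisting by $\mathcal{O}(-2k)({p'}^*\Delta)(D)$ and denoting the three terms by $A$, $B'$, $C$, the long exact sequence, together with $\mathrm{H}^{k+5}(B,A)=0$ (since $\dim B=k+4$), reduces the claim to surjectivity of the connecting map $\alpha:\mathrm{H}^{k+4}(B,A)\to\mathrm{H}^{k+4}(B,B')$.

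Next, I compute both sides by Leray for $\chi$. Using $\mathcal{O}(D)\simeq\mathcal{O}(1)\otimes\chi^*\mathcal{O}_{\PP(\mathcal{F})}(-1)$, $\omega_\chi\simeq\chi^*\det\mathcal{H}^\vee(-k)$, and the resulting relative-duality identity $R^{k-1}\chi_*\mathcal{O}(-n)\simeq\mathrm{Sym}^{n-k}\mathcal{H}\otimes\det\mathcal{H}$ for $n\geq k$, I find that the only non-vanishing direct images are
\[
R^{k-1}\chi_* A\simeq\mathcal{H}\otimes\mathrm{Sym}^{k+2}\mathcal{H}\otimes\mathcal{L},\qquad R^{k-1}\chi_* B'\simeq\mathrm{Sym}^2\mathcal{H}\otimes\mathrm{Sym}^{k+1}\mathcal{H}\otimes\mathcal{L},
\]
with common factor $\mathcal{L}=\det\mathcal{H}\otimes\mathcal{O}_{\PP(\mathcal{F})}(-1)\otimes f^*\mathcal{O}_X(\Delta)$. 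The collapsing Leray spectral sequence then identifies $\alpha$ with the map on $\mathrm{H}^5(\PP(\mathcal{F}),-)$ induced by the natural $GL(\mathcal{H})$-equivariant bundle map
\[
\phi:\mathcal{H}\otimes\mathrm{Sym}^{k+2}\mathcal{H}\to\mathrm{Sym}^2\mathcal{H}\otimes\mathrm{Sym}^{k+1}\mathcal{H},
\]
realised as the coproduct $\mathrm{id}\otimes\Delta_{1,k+1}$ followed by the symmetrization $\mathrm{mult}\otimes\mathrm{id}$.

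The main obstacle, and heart of the proof, is to verify that $\phi$ is an isomorphism of locally free sheaves. By Pieri's rule both source and target decompose as $GL(\mathcal{H})$-representations isomorphically as $\mathrm{Sym}^{k+3}\mathcal{H}\oplus S^{(k+2,1)}\mathcal{H}$, so by Schur's lemma $\phi$ is determined by a pair of scalars---one on each irreducible summand---and is a fibrewise (hence globally) isomorphism iff both scalars are nonzero. I would verify this by restricting to a rank-two subspace with basis $e_1,e_2$: testing $\phi$ on $e_1\otimes e_2^{k+2}$ yields the scalar $k+2$ on the $\mathrm{Sym}^{k+3}\mathcal{H}$-summand, while testing on the Schur-type element $e_1\otimes e_2^{k+2}-e_2\otimes e_1e_2^{k+1}$ yields the scalar $k+1$ on the $S^{(k+2,1)}\mathcal{H}$-summand, both nonzero since $k\geq 4$. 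Once $\phi$ is known to be a bundle isomorphism, $\alpha$ is an isomorphism, and the long exact sequence forces $\mathrm{H}^{k+4}(B,C)=0$, which is the desired vanishing.
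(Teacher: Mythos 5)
Your reduction to the surjectivity of $\alpha:\mathrm{H}^{k+4}(B,\chi^*\mathcal{H}(-2k-3)({p'}^*\Delta)(D))\to \mathrm{H}^{k+4}(B,\chi^*\mathrm{Sym}^2\mathcal{H}(-2k-2)({p'}^*\Delta)(D))$ is correct, as is the Leray/relative-duality identification of $\alpha$ with $\mathrm{H}^5(\PP(\mathcal{F}),\phi\otimes\mathcal{L})$ for $\phi:\mathcal{H}\otimes\mathrm{Sym}^{k+2}\mathcal{H}\to\mathrm{Sym}^2\mathcal{H}\otimes\mathrm{Sym}^{k+1}\mathcal{H}$. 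But the key claim that $\phi$ is an isomorphism is false, and the error is in your Pieri computation: while $\mathcal{H}\otimes\mathrm{Sym}^{k+2}\mathcal{H}\simeq\mathrm{Sym}^{k+3}\mathcal{H}\oplus\mathbb{S}^{(k+2,1)}\mathcal{H}$ is right, the target decomposes as $\mathrm{Sym}^2\mathcal{H}\otimes\mathrm{Sym}^{k+1}\mathcal{H}\simeq\mathrm{Sym}^{k+3}\mathcal{H}\oplus\mathbb{S}^{(k+2,1)}\mathcal{H}\oplus\mathbb{S}^{(k+1,2)}\mathcal{H}$ (Pieri adds two boxes in distinct columns to the row $(k+1)$, giving three partitions). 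The two sides therefore have different ranks, $\phi$ cannot be surjective, and since $\mathrm{H}^5$ is top cohomology on the $5$-dimensional $\PP(\mathcal{F})$, the cokernel of $\alpha$ is exactly $\mathrm{H}^5(\PP(\mathcal{F}),\mathbb{S}^{(k+1,2)}\mathcal{H}\otimes\mathcal{L})$, a space your argument never addresses and which has no reason to vanish for purely formal reasons. Your Schur-lemma scalars only show $\phi$ is fibrewise injective on the two common summands; they do not close the argument. This is a genuine gap, and it is symptomatic: a purely equivariant computation cannot succeed here because the true proof needs a geometric input.

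The paper takes an entirely different and shorter route that supplies exactly that input. It observes that $\pi^*(\mathrm{Sym}^2 N(\Delta)\boxtimes\mathcal{O}_{\PP}(-2))\hookrightarrow\mathrm{Sym}^2(S)({p'}^*\Delta)$ is an isomorphism off the exceptional divisor $D$; since $\dim D=k+3<k+4$, the top cohomology only sees the complement, so the statement reduces via K\"unneth to $\mathrm{H}^2(X,\mathrm{Sym}^2 N(\Delta))=0$. Resolving $\mathrm{Sym}^2 N$ by the Koszul-type complex for $N\hookrightarrow\mathrm{H}^0(E(-\Delta))\otimes\mathcal{O}_X$, this becomes surjectivity of the determinant map $\mathrm{H}^0(E(-\Delta))\otimes\mathrm{H}^0(E)\to\mathrm{H}^0(L)$, which is proved by restricting to a general $C\in|L+\Delta|$ and invoking the bijectivity of the Petri map for $C$ (Lemma \ref{BNP-gen}). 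If you want to salvage your approach you would have to prove the extra vanishing $\mathrm{H}^5(\PP(\mathcal{F}),\mathbb{S}^{(k+1,2)}\mathcal{H}\otimes\det\mathcal{H}\otimes\mathcal{O}_{\PP(\mathcal{F})}(-1)\otimes f^*\mathcal{O}_X(\Delta))=0$, and any proof of that will have to use comparable geometric information.
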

\begin{proof}
The inclusion $\pi^*( \mathrm{Sym}^{2}N(\Delta) \boxtimes \mathcal{O}_{\PP}(-2)) \hookrightarrow \mathrm{Sym}^{2}(S)({p'}^*\Delta)$ is an isomorphism off $D$, so that it suffices to show $\mathrm{H}^{k+4}(X \times \PP, \mathrm{Sym}^{2}N(\Delta) \boxtimes \mathcal{O}_{\PP}(-2-2k))=0$. By the K\"unneth formula, it in turn suffices to show $\mathrm{H}^2(X,\mathrm{Sym}^{2}N(\Delta))=0$. We have an exact sequence
$$0 \to  \mathrm{Sym}^2N \to \mathrm{Sym}^2 \mathrm{H}^0(E(-\Delta)) \otimes \mathcal{O}_X \to \mathrm{H}^0(E(-\Delta)) \otimes E(-\Delta) \to L-\Delta \to 0.$$
Since $\mathrm{H}^2(X,\mathcal{O}_X(\Delta))=H^1(X,E)=0$, it is enough to show that the determinant map
$$\mathrm{det} : \mathrm{H}^0(E(-\Delta)) \otimes \mathrm{H}^0(E) \to \mathrm{H}^0(L)$$
is surjective. Let $C \in |L+\Delta|$ be general (in particular $C$ is disjoint from $\Delta$) and let $A$ be any $g^1_{k+2}$ on $C$. We have an exact sequence
$$0 \to \mathrm{H}^0(A)^{\vee} \otimes \mathcal{O}_X \to E \to \omega_C \otimes A^{\vee} \to 0,$$
\cite{lazarsfeld-BNP}. Restricting to $C$ we have isomorphisms $\mathrm{H}^0(E)=\mathrm{H}^0(E_{|_C})\simeq \mathrm{H}^0(C,A)\oplus \mathrm{H}^0(\omega_C \otimes A^{\vee})$, \cite{voisin-wahl}. Twisting the above exact sequence by $-\Delta$, we have a natural surjection $\mathrm{H}^0(E(-\Delta)) \twoheadrightarrow \mathrm{H}^0(\omega_C \otimes A^{\vee})$. Further, restriction provides an isomorphism $\mathrm{H}^0(X,L) \simeq \mathrm{H}^0(C,\omega_C)$. Thus it suffices to show that the Petri map
$$\mathrm{H}^0(A) \otimes \mathrm{H}^0(\omega_C \otimes A^{\vee}) \to \mathrm{H}^0(\omega_C),$$
is surjective. By Lemma \ref{BNP-gen}, $C$ is Petri general, so the Petri map is injective. But both sides have the same dimension, by the Riemann--Roch theorem, so the Petri map is an isomorphism.

\end{proof}
We now complete the proof of the surjectivity of $\phi$.
\begin{prop}
We have $\mathrm{H}^{k+3}(\bigwedge^2\Gamma (-2k)({p'}^*\Delta)(D))=0$. In particular, $\phi$ is surjective.
\end{prop}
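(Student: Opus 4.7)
The plan is to combine the four-term exact sequence $0 \to \mathrm{Sym}^2(S) \to \pi^*\mathcal{M}\otimes S \to \bigwedge^2\pi^*\mathcal{M} \to \bigwedge^2\Gamma \to 0$ with the previously established vanishings, and then invoke the projective bundle structure via $\chi$ together with a multiplication-by-scalar argument in the spirit of Proposition \ref{green-odd-main}. Setting $T := {q'}^*\mathcal{O}_{\PP}(-2k){p'}^*\Delta(D)$, I split the sequence into two short exact sequences with intermediate sheaf $R' := \mathrm{image}(\pi^*\mathcal{M}\otimes S \to \bigwedge^2\pi^*\mathcal{M})$. The previous lemma gives $\mathrm{H}^{k+4}(\mathrm{Sym}^2(S)\otimes T)=0$, while $\mathrm{H}^{k+4}(\pi^*\mathcal{M}\otimes S\otimes T)=0$ follows from Lemma \ref{identilem} and Proposition \ref{green-odd-main}; together these imply $\mathrm{H}^{k+4}(R'\otimes T)=0$. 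From the long exact sequence of $0 \to R'\to \bigwedge^2\pi^*\mathcal{M}\to \bigwedge^2\Gamma\to 0$, the desired vanishing $\mathrm{H}^{k+3}(\bigwedge^2\Gamma\otimes T)=0$ thus becomes equivalent to surjectivity of the natural map $\mathrm{H}^{k+3}(\pi^*\mathcal{M}\otimes S\otimes T)\to \mathrm{H}^{k+3}(\bigwedge^2\pi^*\mathcal{M}\otimes T)$.

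Next, I translate this via Lemma \ref{identilem} into surjectivity of
$$\psi: \mathrm{H}^{k+2}(\pi^*\mathcal{M}\otimes \mathrm{Sym}^{k+1}(S)\otimes T_0)\to \mathrm{H}^{k+2}(\bigwedge^2\pi^*\mathcal{M}\otimes \mathrm{Sym}^k(S)\otimes T_0),$$
where $T_0 := {p'}^*\Delta(D)$. Symmetric powers of the Euler sequence $0\to \mathcal{O}_B(-2)\to \chi^*\mathcal{H}(-1)\to S\to 0$ supply the resolutions
\begin{align*}
0\to \chi^*\mathrm{Sym}^k\mathcal{H}(-k-2)&\to \chi^*\mathrm{Sym}^{k+1}\mathcal{H}(-k-1)\to \mathrm{Sym}^{k+1}(S)\to 0, \\
0\to \chi^*\mathrm{Sym}^{k-1}\mathcal{H}(-k-1)&\to \chi^*\mathrm{Sym}^k\mathcal{H}(-k)\to \mathrm{Sym}^k(S)\to 0.
\end{align*}
Tensoring these with $\pi^*\mathcal{M}\otimes T_0$ and $\bigwedge^2\pi^*\mathcal{M}\otimes T_0$ respectively, and pushing forward along $\chi: B\to \PP(\mathcal{F})$ using $R^{k-1}\chi_*\mathcal{O}_B(-k)\simeq \det\mathcal{H}$, $R^{k-1}\chi_*\mathcal{O}_B(-k-1)\simeq \mathcal{H}\otimes\det\mathcal{H}$ and vanishing of the other direct images for $k\geq 2$, identifies both the source and target of $\psi$ with explicit cohomology groups on $\PP(\mathcal{F})$ of twists of $\mathrm{Sym}^\bullet \mathcal{H}$, $f^*M_L(\Delta)$ (resp.\ $f^*\bigwedge^2 M_L(\Delta)$), $\mathcal{O}_{\PP(\mathcal{F})}(-1)$ and $\det\mathcal{H}$.

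The heart of the argument is then the multiplication-by-$(k+1)$ observation from Proposition \ref{green-odd-main}: the composite $\mathrm{Sym}^{k+1}\mathcal{H}\hookrightarrow \mathrm{Sym}^k\mathcal{H}\otimes\mathcal{H}\twoheadrightarrow \mathrm{Sym}^{k+1}\mathcal{H}$ equals $(k+1)\cdot \mathrm{id}$, hence is invertible in characteristic zero. This splits the natural surjection $\mathrm{Sym}^k\mathcal{H}\otimes\mathcal{H}\twoheadrightarrow \mathrm{Sym}^{k+1}\mathcal{H}$ and, combined with the identifications above, forces $\psi$ to be surjective provided all residual higher cohomologies on $\PP(\mathcal{F})$ vanish. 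These residual vanishings reduce, via the Leray spectral sequence for $f:\PP(\mathcal{F})\to X$ and the K\"unneth formula, to cohomological statements on $X$ that follow from Lemma \ref{mult-maps} together with standard vanishings such as $\mathrm{H}^i(X,L)=0$ for $i>0$ and $\mathrm{H}^1(X,M_L)=0$.

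The hardest part will be the careful bookkeeping of the twists required to identify $\psi$ precisely at the level of $\PP(\mathcal{F})$, so that the multiplication-by-scalar splitting applies cleanly, together with verifying all residual cohomology vanishings. This parallels and extends the computations already carried out in the proofs of Lemma \ref{identilem} and Proposition \ref{green-odd-main}.
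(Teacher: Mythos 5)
Your first paragraph is fine as far as it goes, but it only re-derives (in reverse) the reduction that the paper carries out in the text immediately preceding this proposition: the vanishing $\mathrm{H}^{k+3}(\bigwedge^2\Gamma(-2k)({p'}^*\Delta)(D))=0$ is indeed equivalent, given $\mathrm{H}^{k+4}(\mathrm{Sym}^2(S)(-2k)({p'}^*\Delta)(D))=0$, to surjectivity of $\mathrm{H}^{k+3}(\pi^*\mathcal{M}\otimes S(-2k)({p'}^*\Delta)(D))\to \mathrm{H}^{k+3}(\bigwedge^2\pi^*\mathcal{M}(-2k)({p'}^*\Delta)(D))$. The problem is that you then feed this back through Lemma \ref{identilem} to arrive at surjectivity of $\mathrm{H}^{k+2}(\pi^*\mathcal{M}\otimes\mathrm{Sym}^{k+1}(S)({p'}^*\Delta)(D))\to\mathrm{H}^{k+2}(\bigwedge^2\pi^*\mathcal{M}\otimes\mathrm{Sym}^k(S)({p'}^*\Delta)(D))$ --- which is precisely the statement the whole chain of reductions set out to prove, and which the proposition is designed to supply. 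Your argument is circular: you never introduce an input that breaks out of the loop.

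The mechanism you propose to close the loop does not apply. The multiplication-by-$(k+1)$ splitting works for the composite $\mathrm{Sym}^{k+1}\mathcal{H}\to\mathrm{Sym}^k\mathcal{H}\otimes\mathcal{H}\to\mathrm{Sym}^{k+1}\mathcal{H}$, i.e.\ comultiplication followed by multiplication \emph{inside the symmetric algebra of a single bundle}; that is how it is used in Proposition \ref{green-odd-main} and Lemma \ref{identilem}. The map whose surjectivity you need is a Koszul differential $\pi^*\mathcal{M}\otimes\mathrm{Sym}^{k+1}(S)\to\bigwedge^2\pi^*\mathcal{M}\otimes\mathrm{Sym}^{k}(S)$ in the complex $\bigwedge^{\bullet}\pi^*\mathcal{M}\otimes\mathrm{Sym}^{\bullet}(S)$ attached to the injection $S\hookrightarrow\pi^*\mathcal{M}$; it mixes the two factors, admits no natural retraction, and its failure to be surjective on $\mathrm{H}^{k+2}$ is measured exactly by the group $\mathrm{H}^{k+3}(\bigwedge^2\Gamma(-2k)({p'}^*\Delta)(D))$ you are trying to kill. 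So the ``residual vanishings'' you defer to cannot all be formal consequences of Lemma \ref{mult-maps} and K\"unneth; one of them is the statement itself. The paper instead attacks the vanishing directly with genuinely new tools: (a) the sequence $0\to\mathcal{O}_B\to\mathcal{O}_B(D)\to\mathcal{O}_D(D)\to 0$ together with the projective-bundle structure of $D\to\mathcal{Z}$ to replace the twist by $D$ with a twist by $2D$; (b) the identity $\bigwedge^2\Gamma\simeq\bigwedge^k\Gamma^{\vee}\otimes\det\Gamma$ (using $\mathrm{rank}\,\Gamma=k+2$), Lemma \ref{det-gamma} and Serre duality on $B$ to convert the target into $\mathrm{H}^1(\bigwedge^k\Gamma(2D-2))$; and (c) a fresh resolution of $\bigwedge^k\Gamma$ with a new round of case-by-case vanishings. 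None of these steps appears in your outline, and without something playing their role the argument does not close.
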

\begin{proof}
We use the exact sequence $0 \to \mathcal{O}_B \to \mathcal{O}_B(D) \to \mathcal{O}_D(D) \to 0.$ We first show \begin{equation} \label{rest-D}
\mathrm{H}^{k+2}(D,\bigwedge^2\Gamma (-2k)({p'}^*\Delta)(2D))=0.
\end{equation}
We have the exact sequence
$$0 \to \mathrm{Sym}^2(S)_{|_D} \to \pi^*\mathcal{M} \otimes S_{|_D} \to \bigwedge^2 \pi^*\mathcal{M}_{|_D} \to \bigwedge^2 \Gamma_{|_D} \to 0.$$
Since $D \to \mathcal{Z}$ is a projective bundle with three dimensional fibres, and $\mathcal{O}_D(D)$ has degree $-1$ on the fibres, $\mathrm{H}^j(D,\pi^*A(nD))=0$
for any bundle $A$ on $X \times \PP$, any integer $j$ and $1 \leq n \leq 3$. Thus $\mathrm{H}^{k+2}(D,\bigwedge^2\pi^*\mathcal{M}(-2k)({p'}^*\Delta)(2D))=0.$ Next, we have $\mathrm{H}^{k+3}(D,\pi^*\mathcal{M} \otimes S(-2k)({p'}^*\Delta)(2D))\simeq \mathrm{H}^{k+3}(D,\pi^*\mathcal{M} \otimes \chi^*\mathcal{H}(-2k-1)({p'}^*\Delta)(2D))$ from the exact sequence $$0 \to \mathcal{O}_D(-2) \to \chi^*\mathcal{H}_{|_D}(-1) \to S_{|_D} \to 0.$$ This space vanishes from the exact sequence $0 \to {p'}^*N \to \chi^*\mathcal{H} \to \mathcal{O}_B(-1)(D) \to 0.$ Lastly, $\mathrm{H}^{k+4}(D, \mathrm{Sym}^2(S)(-2k)({p'}^*\Delta)(2D))=0$ since $\dim D=k+3$, giving the vanishing (\ref{rest-D}).

To finish the proof, it suffices to show $\mathrm{H}^{k+3}(\bigwedge^2\Gamma (-2k)({p'}^*\Delta)(2D))=0$. We have $$\bigwedge^2 \Gamma \simeq \bigwedge^k \Gamma^{\vee}(\det \Gamma)\simeq \bigwedge^k \Gamma^{\vee}(k-1)(-{p'}^*\Delta-D)$$ by Lemma \ref{det-gamma}. Thus, $\mathrm{H}^{k+3}(\bigwedge^2\Gamma (-2k)({p'}^*\Delta)(2D))$ is Serre dual to $\mathrm{H}^{1}(\bigwedge^k\Gamma (2D-2))$. From the exact sequence
{\small{$$0 \to \mathrm{Sym}^k S \to \ldots \to \bigwedge^{k-1}\pi^*\mathcal{M} \otimes S \to \bigwedge^k \pi^* \mathcal{M} \to \bigwedge^k \Gamma \to 0,$$}}
it suffices to show $\mathrm{H}^{1+i}(\bigwedge^{k-i}\pi^*\mathcal{M} \otimes \mathrm{Sym}^i(S)(2D-2))=0$ for $0 \leq i \leq k$. In the case $i=0$ we have $\mathrm{H}^{1}(\bigwedge^{k}\pi^*\mathcal{M}(2D-2))=0$ immediately from the K\"unneth formula on $X \times \PP$. For $1 \leq i \leq k-2$ the required vanishing follows immediately from the exact sequence
$$0 \to \chi^* \mathrm{Sym}^{i-1} \mathcal{H} (-i-1) \to \chi^*\mathrm{Sym}^{i} \mathcal{H} (-i) \to \mathrm{Sym}^i(S) \to 0,$$
as in the proof of Proposition \ref{most-of-van}. \smallskip

It remains to deal with the cases $i=k-1,k$. For $i=k-1$, we have
{\small{
\begin{align*}
\mathrm{H}^{k}(B,\pi^*\mathcal{M} \otimes \mathrm{Sym}^{k-1}(S)(2D-2)) & \simeq \mathrm{H}^{k+1}(B,\pi^*\mathcal{M} \otimes \chi^*\mathrm{Sym}^{k-2}(\mathcal{H})(-k)(2D-2))\\
&\simeq \mathrm{H}^2(\PP(\mathcal{F}),f^*M_L \otimes \mathrm{Sym}^{k-2}(\mathcal{H})\otimes R^{k-1}\chi_*\mathcal{O}_B(-k)\otimes\mathcal{O}_{\PP(\mathcal{F})}(-2))\\
&\simeq \mathrm{H}^2(\PP(\mathcal{F}),f^*M_L \otimes \mathrm{Sym}^{k-2}(\mathcal{H})\otimes\det \mathcal{H}\otimes\mathcal{O}_{\PP(\mathcal{F})}(-2)),
\end{align*}
}}
where we used $\omega_{\chi} \simeq \chi^*\det \mathcal{H}^{\vee}(-k)$ and relative duality for the last equality. Since $\mathrm{Sym}^{k-2}\mathcal{H} \simeq (\chi_*\mathcal{O}_B(k-2))^{\vee} \simeq R^{k-1}\chi_*\mathcal{O}_B(2-2k)\otimes \det \mathcal{H}^{\vee}$, we may identify the above space with $$\mathrm{H}^{k+1}(B, \pi^*\mathcal{M}(-2k)(2D))=0,$$ by the K\"unneth formula.\smallskip

To complete the proof, we need to show $\mathrm{H}^{k+1}(\mathrm{Sym}^k(S)(2D-2))=0$. As in the proof of Proposition \ref{green-odd-main}, the map
$$\mathrm{H}^{k+1}(\mathrm{Sym}^{k-1}(\mathcal{H})(-k-1)(2D-2))\to \mathrm{H}^{k+1}(\mathrm{Sym}^{k}(\mathcal{H})(-k)(2D-2))$$
is surjective, so $\mathrm{H}^{k+1}(\mathrm{Sym}^k(S)(2D-2))$ is isomorphic to 
$$ \mathrm{Ker}\left(  \mathrm{H}^{k+2}(B,\chi^*\mathrm{Sym}^{k-1}\mathcal{H}(-k-1)(2D-2))\twoheadrightarrow \mathrm{H}^{k+2}(B,\chi^*\mathrm{Sym}^{k}\mathcal{H}(-k)(2D-2) \right)$$
which is naturally identified with
{\small{
\begin{align*} \mathrm{Ker}(  &\mathrm{H}^{3}(\PP(\mathcal{F}),\mathrm{Sym}^{k-1}\mathcal{H}\otimes R^{k-1}\chi_*\mathcal{O}_B(-k-1)\otimes \mathcal{O}_{\PP(\mathcal{F})}(-2))\twoheadrightarrow \\
& \mathrm{H}^{3}(\PP(\mathcal{F}), \mathrm{Sym}^{k}\mathcal{H} \otimes R^{k-1}\chi_*\mathcal{O}_B(-k)\otimes \mathcal{O}_{\PP(\mathcal{F})}(-2) ).
\end{align*}
}}
As in the proof of Lemma \ref{identilem}, the above surjection splits and, using relative duality, the above space is identified with 
$$\mathrm{Coker}\left( \mathrm{H}^{k+2}(B,\mathcal{O}_B(-2k-2)(2D)) \to \mathrm{H}^{k+2}(B,\mathcal{O}_B(-2k-1)\otimes \chi^* \mathcal{H}(2D))\right).$$
From the sequence $0 \to {p'}^*N \to \chi^*\mathcal{H} \to \mathcal{O}_B(-1)(D) \to 0$, and since $\mathrm{H}^0(X,N)=\mathrm{H}^1(X,N)=0$, we have $\mathrm{H}^{k+2}(\mathcal{O}_B(-2k-1)\otimes \chi^* \mathcal{H}(2D)) \simeq \mathrm{H}^{k+2}(\mathcal{O}_B(-2k-2)(3D))$. The map 
$ \mathrm{H}^{k+2}(\mathcal{O}_B(-2k-2)(2D)) \to  \mathrm{H}^{k+2}(\mathcal{O}_B(-2k-2)(3D))$ is multiplication by the section $s \in \mathrm{H}^0(\mathcal{O}_B(D))$ induced from the composition $\mathcal{O}_B \hookrightarrow \chi^*\mathcal{H}(1)$, coming from the Euler sequence, with the map $\chi^*\mathcal{H}(1) \twoheadrightarrow \mathcal{O}_B(D)$ coming from the definition of $\mathcal{H}$. Note $s \neq 0$ as $\mathrm{H}^1(B,{p'}^*N(1))=0$, so this map is an isomorphism (by Equation (\ref{mult-D-iso})). This completes the proof.

\end{proof}
\begin{rem}
Note that we do not need the geometry of Grassmannians, unlike \cite[\S 3, Fourth step]{V2}. 
\end{rem}

Let $\psi^{\vee}: \mathrm{Sym}^{k-1}\mathrm{H}^0(E) \xrightarrow{\sim} \mathrm{K}_{k,1}(X,L+\Delta)$ and $\phi^{\vee}: \mathrm{Sym}^{k-1}\mathrm{H}^0(E) \hookrightarrow \mathrm{K}_{k-1,1}(X,-\Delta, L)$ be the duals to the maps from Lemma \ref{green-pic2} and Proposition \ref{green-odd-main}. By Lemma \ref{proj-crit}, to complete the proof of Voisin's Theorem, it only remains to show that $\phi^{\vee}=pr_k \circ \psi^{\vee}$.\smallskip

By duality and from the sequence $0 \to \mathcal{S} \to \pi^* \mathcal{M} \to \Gamma \to 0,$ we identify $\phi^{\vee}$ with
$$\mathrm{H}^{k+2}(B, \wedge^{k-1} \mathcal{S}\otimes \pi^*(L-\Delta \boxtimes \omega_{\PP})(2D)) \to \mathrm{H}^{k+2}(B, \pi^*\wedge^{k-1}M_L(L-\Delta) \boxtimes \omega_{\PP})(2D)),$$
which can, in turn, be identified with the natural map
$$\phi^{\vee} \; : \; \mathrm{H}^{k+2}(X \times \PP, \wedge^{k-1}N (L-\Delta) \boxtimes \omega_{\PP}(1-k)) \to \mathrm{H}^{k+2}(X \times \PP, \wedge^{k-1}M_L(L-\Delta) \boxtimes \omega_{\PP}).$$
\begin{thm}
With notation as above, we have $\mathrm{K}_{k,1}(X,L)=0$.
\end{thm}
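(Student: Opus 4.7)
The plan is to leverage the infrastructure already assembled in this section. By Lemma \ref{proj-crit}, it suffices to show that the projection map $pr_k: \mathrm{H}^1(\wedge^{k+1}M_{L'}) \to \mathrm{H}^1(\wedge^k M_L(-\Delta))$ is injective. Lemma \ref{green-pic2} gives the isomorphism $\psi^{\vee}: \mathrm{Sym}^{k-1}\mathrm{H}^0(E) \xrightarrow{\sim} \mathrm{K}_{k,1}(X,L') = \mathrm{H}^1(\wedge^{k+1}M_{L'})$, and the surjectivity of $\phi$ established in the preceding propositions produces the injection $\phi^{\vee}: \mathrm{Sym}^{k-1}\mathrm{H}^0(E) \hookrightarrow \mathrm{H}^1(\wedge^k M_L(-\Delta))$. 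Consequently, the injectivity of $pr_k$ would follow from the identity $\phi^{\vee} = pr_k \circ \psi^{\vee}$.

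To verify this identity, the strategy is to express both sides as cohomology maps on $X \times \PP$ induced by the same functorial diagram of kernel bundles. The map $\phi^{\vee}$ has already been rewritten in the form
\[
\mathrm{H}^{k+2}(X \times \PP, \wedge^{k-1}N (L-\Delta) \boxtimes \omega_{\PP}(1-k)) \to \mathrm{H}^{k+2}(X \times \PP, \wedge^{k-1}M_L (L-\Delta) \boxtimes \omega_{\PP}),
\]
induced by the inclusion $N \hookrightarrow M_L$ arising from $\wedge\, t : \mathrm{H}^0(E(-\Delta)) \to \mathrm{H}^0(L)$. I would produce a parallel description of $\psi^{\vee}$ by running the universal secant construction from Sections \ref{sec1}--\ref{sec2} for the line bundle $L'$ on $X \times \PP$: this identifies $\psi^{\vee}$ with the natural map induced by the inclusion of the $L'$-secant bundle into $\pi^* p^* M_{L'}$.

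The identity $\phi^{\vee} = pr_k \circ \psi^{\vee}$ then reduces to a single commutative diagram of short exact sequences on $B$, interweaving the universal secant sequence for $L'$, the sequence $0 \to S \to \pi^* \mathcal{M} \to \Gamma \to 0$ of this section, and the pullback of $0 \to M_L \to M_{L'} \to \mathcal{O}(-\Delta) \to 0$. Functoriality of the Koszul complex, together with the determinant computation of Lemma \ref{det-gamma} and its analogue for the $L'$-version of $\Gamma$, then guarantee that both $\phi^{\vee}$ and $pr_k \circ \psi^{\vee}$ are computed by the same underlying morphism of sheaves, up to the canonical identification of the target with $\mathrm{Sym}^{k-1}\mathrm{H}^0(E)^{\vee}$ arising via relative duality ($R^2 q_* \mathcal{O}_{X \times \PP} \simeq \mathcal{O}_{\PP}$) and the Euler sequence on $\PP$. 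I expect the main obstacle to be the bookkeeping required to match these two canonical identifications up to a unit: each computation passes through several Euler sequences, boundary maps, and Serre duality isomorphisms, so any sign or scalar discrepancy would need to be tracked carefully. Once this compatibility is established, $pr_k$ is injective on the whole of $\mathrm{K}_{k,1}(X,L')$, and Lemma \ref{proj-crit} finishes the proof.
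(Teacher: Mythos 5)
Your reduction is the correct one, but it is also exactly where the paper leaves off \emph{before} this theorem: the text already records that it only remains to show $\phi^{\vee}=pr_k\circ\psi^{\vee}$, so the entire content of the proof is the verification of that identity, and this is where your sketch has a genuine gap. The two maps do not naturally live on the same space. The map $\phi^{\vee}$ is built from $N=\mathrm{Ker}(\mathrm{H}^0(E(-\Delta))\otimes\mathcal{O}_X\to E(-\Delta))$ and the elementary transformation $S$, whose construction is tied to the codimension-four locus $\mathcal{Z}\seq X\times\Lambda$ with $\Lambda=\PP(\mathrm{H}^0(E(-\Delta)))\seq\PP$, whereas $\psi^{\vee}$ comes from the universal secant bundle for $L'$ attached to the zero schemes $Z(s)$ for arbitrary $s\in\mathrm{H}^0(E)$, which lives on the blow-up of $X\times\PP$ along the codimension-two universal vanishing locus of $E$. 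There is no global short exact sequence interweaving these objects on $B$ or on $X\times\PP$, so ``a single commutative diagram of short exact sequences on $B$'' is not available as stated; the difficulty is structural, not a matter of tracking signs and scalars through Euler sequences and dualities.

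The paper's resolution is to introduce a third space: the blow-up $\tilde{\pi}:\widetilde{B}\to X\times\PP$ along the codimension-two locus $\widetilde{\mathcal{Z}}=\{(x,s)\mid s(x)=0\}$ for sections of $E$ itself, carrying two secant-type bundles $\tilde{S}_{L'}$ and $\tilde{S}_L$ (the latter with source $\mathrm{H}^0(E(-\Delta))\otimes\mathcal{O}_{\PP}(-1)$). Both $\psi^{\vee}$ and $\phi^{\vee}$ are re-expressed on $\widetilde{B}$ --- the latter via the extension $0\to\tilde{\pi}^*(N\boxtimes\mathcal{O}_{\PP}(-1))\to\tilde{S}_L\to\tilde{\pi}^*(\mathcal{O}_X(-\Delta)\boxtimes\mathcal{O}_{\PP}(-2))(\widetilde{D})\to0$ together with several cohomology vanishings --- and the key point is that $\tilde{S}_L$ and $\tilde{S}_{L'}$ fit into an exact sequence $0\to\tilde{S}_L\to\tilde{S}_{L'}\to\mathcal{O}(-\tilde{p}^*\Delta)\to0$ only over the open set $U$ complementary to $\tilde{\pi}^{-1}(X\times\Lambda)$. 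This produces the commutative square comparing $\wedge^{k+1}\tilde{S}_{L'}$ with $\wedge^k\tilde{S}_L(-\tilde{p}^*\Delta)$ over $U$, which then extends uniquely to $\widetilde{B}$ because the complement of $U$ has codimension two. To complete your argument you need this change of space and the codimension-two extension step; without them the compatibility $\phi^{\vee}=pr_k\circ\psi^{\vee}$ does not reduce to bookkeeping.
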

\begin{proof}
We need to show $\phi^{\vee}=pr_k \circ \psi^{\vee}$. To relate $\psi^{\vee}$ and $\phi^{\vee}$, let $\tilde{\pi}: \widetilde{B} \to X \times \PP$ denote the blow-up in the codimension two locus $\widetilde{\mathcal{Z}}:=\{ (x,s) \; | s(x)=0 \}$. Let $\widetilde{D}$ denote the exceptional divisor. Define $\tilde{p}:=p \circ \tilde{\pi}$ and $\tilde{q}:=q \circ \tilde{\pi}$. We have the exact sequence
$$0 \to \tilde{\pi}^*(\mathcal{O}_X(-\Delta) \boxtimes \mathcal{O}_{\PP}(-2))(\widetilde{D}) \to \tilde{\pi}^*(E(-\Delta) \boxtimes \mathcal{O}_{\PP}(-1)) \to {\tilde{p}}^* L \otimes I_{\widetilde{D}} \to 0.$$
We have $\tilde{q}_*({\tilde{p}}^* L \otimes I_{\widetilde{D}}) \simeq \mathrm{H}^0(E(-\Delta)) \otimes \mathcal{O}_{\PP}(-1)$. Define $\tilde{S}_{L'}$ and $\tilde{S}_L$ by exact sequences
{\small{\begin{align*}
0 \to &\tilde{S}_{L'} \to {\tilde{q}}^*{\tilde{q}}_*({\tilde{p}}^* {L'} \otimes I_{\widetilde{D}})  \to {\tilde{p}}^* L' \otimes I_{\widetilde{D}} \to 0, \\
0 \to &\tilde{S}_L \to \mathrm{H}^0(E(-\Delta)) \otimes {\tilde{q}}^*\mathcal{O}_{\PP}(-1) \to {\tilde{p}}^* L \otimes I_{\widetilde{D}} \to 0.
\end{align*}}}
Then $\psi^{\vee}$ is the natural map $\psi^{\vee} \; : \; \mathrm{H}^{k+3}(\wedge^{k+1}\tilde{S}_{L'} \otimes \omega_{\widetilde{B}}) \to \mathrm{H}^{k+3}(\tilde{\pi}^*(\wedge^{k+1}M_{L'} \boxtimes \mathcal{O}_{\PP}) \otimes \omega_{\widetilde{B}}).$
By taking exterior powers of the exact sequence
$$0 \to \tilde{\pi}^*(N \boxtimes \mathcal{O}_{\PP}(-1)) \to \tilde{S}_L \to \tilde{\pi}^* (\mathcal{O}_X(-\Delta) \boxtimes \mathcal{O}_{\PP}(-2))(\widetilde{D}) \to 0$$
and using $\mathrm{H}^0(X,\wedge^{k-2}N(L-2\Delta))=\mathrm{H}^0(N^{\vee}(-\Delta))=\mathrm{H}^2(N(\Delta))=0$, identify $\phi^{\vee}$ with
$$\mathrm{H}^{k+2}(\wedge^{k-1} \tilde{S}_L (L-\Delta) \otimes {\tilde{q}}^* \omega_{\PP}) \to \mathrm{H}^{k+2}(\tilde{\pi}^*\wedge^{k-1}M_L (L-\Delta) \boxtimes \omega_{\PP}),$$
induced by $\tilde{S}_L \hookrightarrow \tilde{p}^* M_L$. Using exterior powers of the defining sequence for $\tilde{S}_L$, this can be further identified with
$$\phi^{\vee} \; : \; \mathrm{H}^{k+3}(\wedge^k \tilde{S}_L(-{\tilde{p}}^*\Delta) \otimes \omega_{\widetilde{B}}) \to \mathrm{H}^{k+3}(\tilde{\pi}^*(\wedge^k M_L(-\Delta) \boxtimes \mathcal{O}_{\PP}) \otimes \omega_{\widetilde{B}}),$$
using $\mathrm{H}^0(\mathcal{O}_X(-\Delta))=\mathrm{H}^1(\mathcal{O}_X(-\Delta))=0$.\smallskip

Let $U \seq \widetilde{B}$ be the complement of the codimension two locus $\tilde{\pi}^{-1}(X \times \PP(\mathrm{H}^0(E(-\Delta)))$. We have an exact sequence
$0 \to \tilde{S}_{L_{|_U}} \to \tilde{S}_{L'_{|_U}} \to \mathcal{O}_U(-\tilde{p}^*\Delta) \to 0$, giving a commutative diagram
{\small{
$$\begin{tikzcd}
\wedge^{k+1} \tilde{S}_{L'_{|_U}} \arrow[d, "\simeq"] \arrow[r] & \tilde{\pi}^*(\wedge^{k+1}M_{L'} \boxtimes \mathcal{O}_{\PP})_{|_U} \arrow[d, "q_{|_U}"] \\
\wedge^k \tilde{S}_{L_{|_U}}( -\tilde{p}^*\Delta)  \arrow[r] & \tilde{\pi}^*( \wedge^k M_L(-\Delta) \boxtimes \mathcal{O}_{\PP})_{|_U} 
\end{tikzcd}$$
}}
where $q: \tilde{\pi}^*(\wedge^{k+1}M_{L'} \boxtimes \mathcal{O}_{\PP}) \to \tilde{\pi}^*(\wedge^k M_L(-\Delta) \boxtimes \mathcal{O}_{\PP})$ is the projection. This diagram extends uniquely to $\widetilde{B}$, giving the claim. \end{proof}


\begin{thebibliography}{aaaaaa}
\bibitem[ABW]{ABW} K. Akin, D. Buchsbaum and J. Weyman, {\em{Schur functors and Schur complexes}}, Advances Math.\ \textbf{44} (1982), 207-278.
\bibitem[AF]{aprodu-farkas} M. Aprodu and G. Farkas, {\em{Green's conjecture for curves on arbitrary K3 surfaces}}, Compositio Math.\ \textbf{147} (2011), 839-851.
\bibitem[AFPRW]{AFPRW} M. Aprodu, G. Farkas, S. Papadima, C. Raicu and J. Weyman, {\em{Koszul modules and Green's conjecture}}, Inventiones Math.\ \textbf{218} (2019), 657-720.
\bibitem[AN1]{aprodu-nagel-nonvanishing} M. Aprodu and  J. Nagel, {\em{Non-vanishing for Koszul cohomology of curves}}, Commentarii Math. Helv. \textbf{82} (2007), 617-628

\bibitem[AN2]{aprodu-nagel} M. Aprodu and  J. Nagel, {\em{Koszul cohomology and algebraic geometry}}, University Lecture Series \textbf{52}, American Mathematical Society, Providence, RI (2010).

\bibitem[AM]{andreotti-mayer} A. Andreotti and A. L. Mayer, {\em{On period relations for abelian integrals on algebraic curves}}, Annali della Scuola Normale Superiore di Pisa \textbf{21} (1967),189-238.


\bibitem[AS]{arbarello-sernesi-petri} E. Arbarello and E. Sernesi, {\em{Petri's approach to the study of the ideal associated to a special divisor}}, Inventiones Math. \textbf{49} (1978), 99-119.



\bibitem[vB1]{bothmer-preprint} H-C Graf v Bothmer, {\em{Geometric syzygies of canonical curves of even genus lying on a K3 surfaces}}, arXiv:math/0108078.

\bibitem[vB2]{bothmer-JPAA} H-C Graf v Bothmer, {\em{Generic syzygy schemes}}, J.\ Pure and Applied Algebra \textbf{208} (2007), 867-876.
\bibitem[vB3]{bothmer-Transactions} H-C Graf v Bothmer, {\em{Scrollar syzygies of general canonical curves with genus $\le8$}}, Transactions AMS \textbf{359} (2007), 465-488.










\bibitem[EH]{3264} D. Eisenbud and J. Harris, {\em{3264 and all that: A second course in algebraic geometry}}, Cambridge University Press, 2016.

\bibitem[EL3]{ein-lazarsfeld-gonality} L. Ein and R. Lazarsfeld, {\em{The gonality conjecture on syzygies of algebraic curves of
              large degree}}, Publ. Math. Inst. Hautes \'Etudes Sci. \textbf{122} (2015), 301-313.
              
  \bibitem[EL1]{ein-laz-arb-dim} L. Ein and R. Lazarsfeld, {\em{Syzygies and Koszul cohomology of smooth projective varieties of arbitrary dimension}}, Inventiones Math.\ \textbf{111} (1993), 51-67.
 \bibitem[EL2]{ein-lazarsfeld-asymptotic} L. Ein and R. Lazarsfeld, {\em{Asymptotic Syzygies of Algebraic Varieties}}, Inventiones Math.\ \textbf{190} (2012), 603-646.
\bibitem[F]{farkas-progress} G. Farkas, {\em{Progress on syzygies of algebraic curves}}, Lecture Notes of the Unione Matematica Italiana \textbf{21} (Moduli of Curves, Guanajuato 2016), 107-138.
\bibitem[FO]{farkas-ortega} G. Farkas and A. Ortega, {\em{Higher rank Brill--Noether theory on sections of K3 surfaces}}, International J.\ Math.\ \textbf{23}(2012), 1250075.

\bibitem[FK]{lin-syz}  G. Farkas and M. Kemeny, {\em{Linear syzygies for curves of prescribed gonality}}, Advances Math.\ \textbf{356} (2019), 106810.

 


\bibitem[G1]{green-canonical} M. Green, {\em{The canonical ring of a variety of general type}}, Duke Math. J. \textbf{49} (1982), 1087-1113.


\bibitem[G2]{green-koszul} M. Green, {\em{Koszul cohomology and the cohomology of projective varieties}}, J.\  Differential Geo.\ \textbf{19} (1984), 125-171.

 \bibitem[G2]{green-quadrics} M. Green, {\em{Quadrics of rank four in the ideal of a canonical curve}}, Inventiones Math.\ \textbf{75} (1984), 85-104.
\bibitem[GP]{gallego-purnaprajna} F.\ Gallego and B.\ Purnaprajna, {\em{Projective normality and syzygies of algebraic surfaces}}, J. Reine Angewandte Math.\ \textbf{506} (1999), 145-180.

\bibitem[Gro1]{EGA} A. Grothendieck, {\em{\'El\'ements de g\'eom\'etrie alg\'ebrique : IV. \'Etude locale des sch\'emas et des morphismes de sch\'emas, Seconde partie}}. Publications Math\'ematiques de l'IH\'ES \textbf{24} (1965), 5-231.
\bibitem[Gro2]{local-coh} A. Grothendieck, {\em{Local Cohomology}}, Springer-Verlag, Heidelberg 1967.
\bibitem[GL]{green-laz-petri} M. Green and R. Lazarsfeld, {\em{A simple proof of Petri'™s theorem on canonical curves}}, Geometry Today (1984), 129-142.

 

\bibitem[Ha]{hartshorne} R. Hartshorne, {\em{Algebraic Geometry}}, Graduate Texts in Mathematics \textbf{52}, Springer-Verlag, New York, 1977.

\bibitem[Hu]{huybrechts-k3} D. Huybrechts, {\em{Lectures on K3 surfaces.}}, Cambridge University Press \textbf{158}, 2016.












\bibitem[L1]{lazarsfeld-BNP} R. Lazarsfeld, {\em{Brill-Noether-Petri without degenerations}}, J. Differential Geo.\ \textbf{23} (1986), 299-307.
\bibitem[L2]{lazarsfeld-VBT} R. Lazarsfeld, {\em{A sampling of vector bundle techniques in the study of linear series}}, Lectures on Riemann Surfaces (Trieste, 1987), World Sci.\, 500-559.

\bibitem[LC]{lelli-chiesa} M. Lelli-Chiesa,  {\em{Stability of rank 3 Lazarsfeld-Mukai bundles on K3 surfaces}}, Proc.\ London Math.\ Soc.\ \textbf{107} (2013), 451-479.





\bibitem[M]{mayer} A. Mayer, {\em{Families of K3 surfaces}} Nagoya Math J. \textbf{48} (1972), 1-17.

\bibitem[O]{ogus} A. Ogus {\emph{Supersingular K3 crystals.}} Ast\'erisque \textbf{64} (1979), 3-86.




\bibitem[V1]{voisin-wahl} C. Voisin, {\em{Sur l'application de Wahl des courbes satisfaisant la condition de Brill-Noether-Petri}}, Acta Math.\ \textbf{168} (1992), 249-272.

\bibitem[V2]{V1} C. Voisin, {\em{Green's generic syzygy conjecture for curves of even genus lying on a $K3$ surface}}, J.\ European Math. Society \textbf{4} (2002), 363-404.
\bibitem[V3]{V2} C. Voisin, {\em{Green's canonical syzygy conjecture for generic curves of odd genus}}, Compositio Math.\ \textbf{141} (2005), 1163-1190.


\end{thebibliography}
\end{document}